\documentclass{amsart}

\usepackage{amsmath,amssymb,amsthm}
\usepackage{fourier}
\usepackage{a4wide,bbm}

\newtheorem{theorem}{Theorem}[section]
\newtheorem{lemma}[theorem]{Lemma}
\newtheorem{ass}[theorem]{Assumption}
\usepackage{hyperref}

\newtheorem{remark}[theorem]{Remark}
\allowdisplaybreaks
\theoremstyle{definition}

\newtheorem{example}[theorem]{Example}

\usepackage{xcolor,graphicx,subfigure,caption}
\theoremstyle{remark}
\newtheorem{proposition}[theorem]{Proposition}
\newtheorem{corollary}[theorem]{Corollary}

\numberwithin{equation}{section}

\newcommand{\rev}[1]{{\color{red}#1}}

\begin{document}

\title[Time approximation of  stochastic KdV equation]{Strong error analysis of a temporal approximation for stochastic Korteweg--de Vries equation with small additive noise
}

\author{Jianbo Cui}
\address{Department of Applied Mathematics, The Hong Kong Polytechnic University, Hung Hom, Hong Kong}
\email{jianbo.cui@polyu.edu.hk}

\author{Raffaele D'Ambrosio}
\address{Department of Information Engineering, Computer Science and Mathematics, University of L'Aquila, L'Aquila, Italy.}
\email{raffaele.dambrosio@univaq.it}

\author{Stefano Di Giovacchino}
\address{Department of Information Engineering, Computer Science and Mathematics, University of L'Aquila, L'Aquila, Italy.}
\email{stefano.digiovacchino@univaq.it}

\author{Liying Sun}
\address{Academy for Multidisciplinary Studies, Capital Normal University, Beijing 100048, China}
\email{liyingsun@lsec.cc.an.cn}
%\thanks{The second and third authors are members of the INdAM Research group GNCS. This work has been supported by PRIN-MUR 2022 project 20229P2HEA “Stochastic numerical modelling for sustainable innovation” (CUP: E53C24002280006), granted by MUR within the
%scrolling of the final rankings of the PRIN 2022 call. This work has also been supported by PRIN-PNRR project BAT-MEN (BATtery Modeling, Experiments \& Numerics) - Project code P20228C2PP, CUP E53D23017940001, funded by MUR (Italian Ministry of University and Research) and European Union – NextGenerationEU. This work is partially supported by MOST National Key R\&D Program No. 2024FA1015900, the Hong Kong Research Grant Council GRF grant 15302823, GRF grant 15301025, NSFC/RGC Joint Research Scheme N$\_$PolyU5141/24, NSFC grants (No. 12522119,  No. 12301526, No. 12471386 and No. 12461160278), and the
%CAS AMSS-PolyU Joint Laboratory of Applied Mathematics.
%}
\subjclass[2020]{60H15, 60H35, 65C30, 65M12, 35Q53.}

\date{}

\keywords{Stochastic KdV equations, low-regularity integrators, strong convergence rate, perturbation decomposition}

\begin{abstract}

We study strong temporal approximation of  periodic stochastic Korteweg--de Vries equation driven by small additive \(Q\)-Wiener noise of amplitude \(\mathcal O(\varepsilon)\), \(0<\varepsilon\ll1\). Strong error analysis for temporal approximations of stochastic KdV is a challenging problem, due to the additional derivative term in the nonlinearity and thanks to the lack of suitable exponential moment bounds for the exact solutions.
Exploiting the small-noise regime, we first decompose the solution into a deterministic KdV flow and a stochastic component; then we linearize the obtained stochastic equation and approximate the resulting equation by means of Fourier analytic techniques. 
Combining the small-noise linearization error, the discretization error of the linearized equation, and the deterministic temporal approximation error, we prove strong convergence rates of order \(\mathcal O(\max(\varepsilon^2,\tau,\varepsilon\tau^{1/2}))\) under \(H^1\)-regularity and \(\mathcal O(\max(\varepsilon^2,\tau))\) under \(H^2\)-regularity, for the obtained approximation of the original stochastic KdV. To the best of our knowledge, these are the first explicit strong convergence rates shown for numerical time approximations of the stochastic KdV.

\end{abstract}

\maketitle

\section{Introduction}
\label{intro_section}

The Korteweg--de Vries (KdV) equation is a fundamental model for the
propagation of weakly nonlinear, weakly dispersive, essentially
one-directional waves.  It arises, for instance, in shallow-water theory,
ion-acoustic waves in plasmas, and nonlinear wave propagation in dispersive
media.  Stochastic KdV equations incorporate random perturbations into this
dispersive wave model and are used to describe uncertainty, unresolved
fluctuations, or random forcing effects in such physical systems; see, for
example, \cite{cha,he,sca}.

In this paper we consider the following 
stochastic KdV equation 
\begin{equation}
    \label{kdv_eq}
\dot{u}+\partial_x^3 u -\mu\partial_x (u^2)=\varepsilon Q^{1/2}\dot{W}, \quad t\in[0,T], \quad x \in \mathbbm{T}=[-\pi,\pi],
\end{equation}
subject to periodic boundary conditions.
Here \(T,\mu>0\),
$0<\varepsilon\ll1$ measures the noise intensity, \(W\) is a cylindrical
Wiener process on a completed filtered probability space
\((\Omega,\mathcal F,(\mathcal F_t)_{t\ge0},\mathbbm{P})\), and
\(Q^{1/2}\) is a Hilbert--Schmidt operator (see Section \ref{ass_section} for details).   The well-posedness and qualitative properties of stochastic KdV equations
have been investigated both on the real line and on periodic domains; see, for instance, \cite{deb2,deb0} for the real-line
case and \cite{deb1,Oh} for periodic settings.  Explicit solutions are
available only in very special cases, such as certain one-soliton
configurations driven by a standard Brownian motion \cite{deb1,lin}. Consequently, numerical approximation
is indispensable for understanding the dynamics of stochastic KdV equations
beyond such exceptional cases.

For deterministic dispersive equations, including the KdV equation obtained from \eqref{kdv_eq} with \(\varepsilon=0\), numerical approximation has been studied extensively; see, for example, \cite{kat0} and the references therein. A central difficulty in the numerical analysis of the deterministic KdV equation is the presence of the third-order derivative $\partial_x^3 u$, which typically forces standard temporal discretizations to require $H^{s+3}$-regularity of the exact solution in order to prove first-order convergence in $H^s$, $s\ge0$. This difficulty has motivated the development of low-regularity integrators for deterministic KdV equations. In particular, \cite{kat0} proposed an exponential-type integrator with convergence order $1/2$ in $H^2$ and order $1$ in $H^1$ for exact solutions in $H^3$. A low-regularity convergence analysis in $L^2$ was obtained in \cite{kat1} for filtered time discretizations, with order $\max(1,s/3)$ for solutions in $H^s$, while \cite{li} established first-order convergence in $L^2$ for an unfiltered low-regularity integrator under $H^1$-regularity.

The stochastic case remains much less understood.  Much of the numerical
literature on stochastic dispersive PDEs concerns nonlinear stochastic
Schr\"odinger equations \cite{arm,bre,cui,cui3,cui2,dig} and stochastic wave equations  \cite{cao,cohenlang,hong2}.
For stochastic KdV equations, existing work has mainly focused on trajectory simulation
and predicting the evolution or conservation of physical quantities. For instance simulations based on Crank--Nicolson-type schemes have been studied
in \cite{deb3,lin}, while conservation issues for stochastic \(\theta\)-methods
and finite element discretizations were considered in \cite{da1,da0,liu_ma}.
However,
strong error estimates for temporal approximations of stochastic KdV
equations, and in particular of \eqref{kdv_eq}, appear to be largely absent
from the existing literature.

There are two main obstacles. First, a direct temporal discretization has to
handle 
%the derivative loss generated by the third-order Airy dispersion and by
the additional derivative appearing in the nonlinear term. Second, the full nonlinear stochastic KdV dynamics
are not known to satisfy the exponential integrability estimates that are
commonly used in strong convergence analysis for stochastic PDEs with
superlinear nonlinearities; see, for instance, \cite{cox,cui}. These two issues
make a direct stochastic extension of deterministic low-regularity KdV
integrators highly nontrivial, especially in low-regularity regimes.

%One of the main obstacles is that exact solutions of stochastic KdV equations are not known to satisfy the exponential integrability estimates that are often used to prove strong convergence for numerical approximations of stochastic PDEs with superlinear nonlinearities; see, for instance, \cite{cox,cui}. In addition, a direct stochastic adaptation of deterministic low-regularity KdV integrators must control the interaction between stochastic convolutions and the derivative nonlinearity. These difficulties prevent a straightforward extension of deterministic low-regularity techniques to \eqref{kdv_eq}

The aim of this work is to develop novel approach to overcome these difficulties in the small-noise
regime. Our strategy is to introduce the rescaled fluctuation
$$
u^\varepsilon :=\frac{1}{\varepsilon} (u-\psi),
$$
where $\psi$ solves the deterministic KdV equation, namely \eqref{kdv_eq} with $\varepsilon=0$. This gives the decomposition
$$
u=\psi+\varepsilon u^\varepsilon,
$$
which separates the
nonlinear deterministic evolution from the random perturbation. The
deterministic component \(\psi\) is then approximated by means of 
low regularity integrators available in the literature (see, e.g.,  \cite{kat0}),
%by resonance-based low-regularity techniques,
while the stochastic fluctuation is treated through
a linearization argument. More precisely, it turns out that \(u^\varepsilon\) satisfies a
stochastic KdV-type equation with an \(\mathcal O(1)\) noise term, an
\(\mathcal O(\varepsilon)\) quadratic nonlinearity, and a linear transport term
whose coefficient is determined by \(\psi\). Neglecting the
\(\mathcal O(\varepsilon)\) nonlinear term leads to a linear stochastic
fluctuation equation, whose solution is denoted by \(\chi\).

 For $t_n=n\tau$ and $\tau>0$, the proposed approximation has the form
\[
u_n=\psi_n+\varepsilon\chi_n,
\]
where \(\psi_n\) approximates \(\psi(t_n)\) and \(\chi_n\) approximates
\(\chi(t_n)\). The strong error  $\|u(t_n)-u_n\|_{L^2(\Omega; L^2)}$ is then decomposed as
$$
\|u(t_n)-u_n\|_{L^2(\Omega; L^2)}\le \|\psi(t_n)-\psi_n\|+\varepsilon\|u^\varepsilon(t_n)-\chi(t_n)\|_{L^2(\Omega; L^2)}+\varepsilon \|\chi(t_n)-\chi_n\|_{L^2(\Omega; L^2)}.
$$
This error decomposition is essential: it isolates the deterministic discretization error, the small-noise linearization error, and
the temporal discretization error of the linearized stochastic fluctuation. In particular, owing to the linearity, the last error term can be analyzed without invoking exponential moment estimates for either the exact or numerical solutions. This is one of the main advantages of our approach.

The main results of this paper are threefold. First,
under suitable regularity assumptions on \(\psi\) and \(u^\varepsilon\),  we show that the
linearized fluctuation provides a first-order approximation of the rescaled
stochastic fluctuation, i.e.,
\[
    \|u^\varepsilon(t_n)-\chi(t_n)\|_{L^2(\Omega;L^2)}
    =\mathcal O(\varepsilon).
\]
Second, for the linearized stochastic fluctuation equation, we construct a low-regularity time integrator by introducing a suitable change of variables and carrying out an exact integration in Fourier space.
% {\color{red}Second, for the linearized stochastic fluctuation equation, we construct a
% resonance-resolved time integrator by removing the Airy flow and integrating
% the leading dispersive oscillations explicitly in Fourier space. This
% construction removes the derivative loss caused by the Airy dispersion.
% }
Third, by
combining the deterministic approximation error, the small-noise linearization
error, and the fluctuation discretization error, we establish the strong error
estimate
\[
\sup_{0\le n\le N}
\|u(t_n)-u_n\|_{L^2(\Omega;L^2)}
=
\mathcal O\big(\max\{\tau,\varepsilon^2,\varepsilon\tau^{1/2}\}\big)
\]

under \(H^1\)-regularity of the exact solution. Under \(H^2\)-regularity assumption, this estimate improves to
\[
\sup_{0\le n\le N}
\|u(t_n)-u_n\|_{L^2(\Omega;L^2)}
=
\mathcal O\big(\max\{\tau,\varepsilon^2\}\big).
\]
To the best of our knowledge, these are the first strong convergence results,
with explicit rates, for temporal discretizations of stochastic KdV equations.

\iffalse
\[
    \|u^\varepsilon(t_n)-\chi(t_n)\|_{L^2(\Omega;L^2)}
    =\mathcal O(\varepsilon).
\]

First, we will establish that, under $H^2$-regularity of the deterministic component and $H^1$-regularity of the stochastic fluctuation, one has
$$
\|u^\varepsilon(t_n)-\chi(t_n)\|_{L^2(\Omega; L^2)} = \mathcal{O}(\varepsilon).
$$
Then, with this at hand, under the same regularity assumptions, we will obtain the following strong error estimate for the numerical approximation $u_n$
\[
\sup_{0\le n\le N}
\|u(t_n)-u_n\|_{L^2(\Omega;L^2)}
=
\mathcal O\big(\max\{\tau,\varepsilon^2,\varepsilon\tau^{1/2}\}\big).
\]
Finally, if, in addition, the stochastic fluctuation has $H^2$-regularity, then the estimate improves to 
\[
\sup_{0\le n\le N}
\|u(t_n)-u_n\|_{L^2(\Omega;L^2)}
=
\mathcal O\big(\max\{\tau,\varepsilon^2\}\big).
\]

\rev{To the best of our knowledge, this is the first work to establish a strong convergence analysis, including explicit rates, for temporal discretizations of stochastic KdV equations.}
\fi

The estimates also show that, in the small-noise regime, accurate strong
approximations can already be obtained for stochastic solutions with
\(H^1\)-regularity. This contrasts with more classical schemes, such as
Crank--Nicolson-type methods, which typically require higher regularity to
exhibit comparable convergence behavior; this point is further illustrated in
the numerical experiments in Section~\ref{exp_section}. Beyond the stochastic
KdV equation, the analysis demonstrates how low-regularity
techniques in Fourier space can be combined with stochastic perturbative arguments to obtain
strong convergence rates for stochastic dispersive PDEs. In this sense, the
present work contributes to the emerging numerical analysis of stochastic dispersive equations at low-regularity regime. Related contributions in stochastic
dispersive PDEs include \cite{arm,cao,cui2,dig}.

The paper is organized as follows. In Section \ref{ass_section}, we introduce the notation, assumptions, and preliminary results used throughout the paper. In Section \ref{lin_section}, we present the perturbative decomposition, the linearized stochastic problem and we analyze the small-noise linearization error. In Section \ref{lw_section}, we construct the low-regularity time discretization for the linearized stochastic equation. Section \ref{err_section} is devoted to the strong error analysis of the proposed numerical scheme. The theoretical findings are illustrated by numerical experiments in Section \ref{exp_section}, and the proofs of technical results are collected in Appendix \ref{appen}.

\section{Assumptions and preliminary results}
\label{ass_section}

This section fixes the functional framework and collects the assumptions and preliminary estimates used throughout the paper. We first recall the Fourier and Sobolev notation on the one-dimensional torus, then specify the stochastic KdV equation and the noise setting, and finally present the analytic estimates used repeatedly in the error analysis.

\subsection{Functional setting and Fourier notation}

Throughout the paper, we work on the one-dimensional torus
\(\mathbbm T=[-\pi,\pi]\)
with periodic boundary conditions. For any \(p\in[1,\infty]\), we denote by \(L^p=L^p(\mathbb T;\mathbbm R)\) the space of real-valued periodic functions on \(\mathbbm T\). For \(p\in[1,\infty)\), its norm is given by
\[
    \|f\|_{L^p}
    =
    \left(\int_{\mathbbm T}|f(x)|^p\,dx\right)^{1/p},
\]
and, for \(p=\infty\),
\[
    \|f\|_{L^\infty}
    =
    \operatorname*{ess\,sup}_{x\in\mathbbm T}|f(x)|.
\]
When \(p=2\), we simply write \(\|f\|=\|f\|_{L^2}\). The space \(L^2\) is endowed with the inner product
\[
    \langle f,g\rangle
    =
    \int_{\mathbbm T} f(x)g(x)\,dx,
    \qquad f,g\in L^2.
\]

For \(f\in L^2\), we use the Fourier expansion
\[
    f(x)=\frac{1}{\sqrt{2\pi}}\sum_{\ell\in\mathbbm Z}\widehat f_\ell e^{\mathrm i\ell x},
\]
where \(\{\widehat f_\ell\}_{\ell\in\mathbbm{Z}}\) denotes the sequence of Fourier coefficients of \(f\). For \(s\in\mathbbm R\), we define \(H^s=H^s(\mathbb T;\mathbbm R)\) by the norm
\[
    \|f\|_{H^s}
    =
    \left(\sum_{\ell\in\mathbbm{Z}}(1+|\ell|^2)^s |\widehat f_\ell|^2\right)^{1/2}.
\]
For \(s\ge0\), the homogeneous Sobolev seminorm is denoted by
\[
    \|f\|_{\dot H^s}
    =
    \left(\sum_{\ell\in\mathbbm Z}|\ell|^{2s}|\widehat f_\ell|^2\right)^{1/2}.
\]
In particular, when \(s\) is a nonnegative integer, \(\|f\|_{\dot H^s}=\|\partial_x^s f\|\).

For \(q\in[1,\infty)\) and a function
\[
    w(x)=\frac{1}{\sqrt{2\pi}}\sum_{\ell\in\mathbbm Z}\widehat w_\ell e^{\mathrm i\ell x},
\]
we write
\[
    \|w\|_{\ell^q}
    =
    \left(\sum_{\ell\in\mathbbm{Z}}|\widehat w_\ell|^q\right)^{1/q},
    \qquad
    \ell^q:=\{w:\|w\|_{\ell^q}<\infty\}.
\]
For \(j\in\mathbbm N_0\), we also set
\[
    w^{(j)}(x)
    =
    \frac{1}{\sqrt{2\pi}}
    \sum_{\ell\in\mathbbm Z}|\ell|^j|\widehat w_\ell| e^{\mathrm i\ell x}.
\]
Here and below, \(\mathbbm N=\{1,2,\ldots\}\) and \(\mathbbm N_0=\{0\}\cup\mathbbm N\). Moreover, \(W^{k,\infty}=W^{k,\infty}(\mathbbm T;\mathbbm R)\), \(k\in\mathbbm N_0\), denotes the usual Sobolev space of functions whose weak derivatives up to order \(k\) belong to \(L^\infty\).

Let \(\sigma\ge0\). We denote by \(\mathcal L_2^\sigma\) the space of Hilbert--Schmidt operators from \(L^2\) to \(H^\sigma\), equipped with the norm
\[
    \|\Phi\|_{\mathcal L_2^\sigma}
    =
    \left(\sum_{k\in\mathbbm N}\|\Phi e_k\|_{H^\sigma}^2\right)^{1/2},
\]
where \(\{e_k\}_{k\in\mathbbm N}\) is any orthonormal basis of \(L^2\). The value of this norm is independent of the particular choice of the orthonormal basis. Finally, for a Banach space \(X\) and \(p\in[1,\infty)\), we denote by \(L^p(\Omega;X)\) the space of \(X\)-valued random variables with finite norm
\[
    \|Y\|_{L^p(\Omega;X)}
    =
    \left(\mathbb E\|Y\|_X^p\right)^{1/p}.
\]

Throughout this work, the nonrandom constant \(C\) may change from line to line. We write \(A\lesssim B\) if \(A\le cB\) for a positive nonrandom constant \(c\) independent of \(\tau\), \(\varepsilon\), and the quantities being estimated.

\subsection{The stochastic KdV equation and well-posedness}

We consider the stochastic periodic KdV equation
\begin{equation}
\label{eq:skdv-section2}
    du(t)
    =
    -\partial_x^3u(t)\,dt
    +\mu\partial_x\bigl(u(t)^2\bigr)\,dt
    +\varepsilon Q^{1/2}\,dW(t),
    \qquad u(0)=\xi.
\end{equation}
Here \(\mu>0\), \(0<\varepsilon\ll1\), and the initial datum \(\xi\) is nonrandom. The process \(W\) is a cylindrical Wiener process on \(L^2\), written as
\[
    W(t)=\sum_{k=1}^\infty \beta_k(t)e_k,
\]
where \(\{\beta_k\}_{k\in\mathbbm{N}}\) is a sequence of independent standard Brownian motions and \(\{e_k\}_{k\in\mathbbm{N}}\) is an orthonormal basis of \(L^2\). The operator \(Q^{1/2}\) is Hilbert--Schmidt on \(L^2\), and additional spatial regularity is imposed through the condition \(Q^{1/2}\in\mathcal{L}_2^\sigma\) when estimates in \(H^\sigma\) are required.

Let
\[
    S(t)=e^{-t\partial_x^3},
    \qquad t\ge0,
\]
denote the Airy group on the torus. The mild form of \eqref{eq:skdv-section2} is
\begin{equation}
\label{eq:mild-section2}
    u(t)
    =
    S(t)\xi
    +\mu\int_0^t S(t-s)\partial_x\bigl(u(s)^2\bigr)\,ds
    +\varepsilon\int_0^t S(t-s)Q^{1/2}\,dW(s).
\end{equation}
We will use the following standard well-posedness result for the stochastic KdV equation (see, e.g. \cite{deb2,deb0,deb1,Oh}).

\begin{proposition}
    [Well-posedness]
\label{thm:wellposedness-section2}
Assume that \(\xi\in H^1\) and \(Q^{1/2}\in\mathcal L_2^1\). Then, for every \(T>0\), equation \eqref{eq:skdv-section2} admits a unique mild solution
\[
    u\in C([0,T];H^1), \qquad \text{a.s.}
\]
Equivalently, \(u\) satisfies \eqref{eq:mild-section2} for every \(t\in[0,T]\).
\end{proposition}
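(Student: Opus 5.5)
The plan is to reduce the stochastic problem to a deterministic KdV-type equation with a random but regular forcing, and then apply pathwise the deterministic $H^1$ theory on the torus. Let
\[
    Z(t)=\varepsilon\int_0^t S(t-s)Q^{1/2}\,dW(s)
\]
be the stochastic convolution. Since $S(t)$ is unitary on every $H^\sigma$ and commutes with derivatives, the It\^o isometry gives $\mathbb E\|Z(t)\|_{H^1}^2\le \varepsilon^2 t\|Q^{1/2}\|_{\mathcal L_2^1}^2$. The factorization method (Da Prato--Kwapie\'n--Zabczyk), applied to the group $S$, then shows that $Z$ has a version in $C([0,T];H^1)$ almost surely. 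If more spatial regularity of the forcing is needed, one can use that $Z$ is a Wiener-type integral: in each Fourier mode it is a Gaussian process, and Kolmogorov's criterion gives time H\"older continuity in $H^1$.

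Writing $u=v+Z$, the mild formulation \eqref{eq:mild-section2} is equivalent, for each fixed $\omega$, to
\[
    v(t)=S(t)\xi+\mu\int_0^t S(t-s)\partial_x\bigl((v+Z)^2\bigr)(s)\,ds,
\]
which is a KdV equation perturbed by the lower-order terms $2\mu\partial_x(vZ)+\mu\partial_x(Z^2)$.

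Local well-posedness of this random equation in $H^1$ (indeed in $L^2$) follows from Bourgain's Fourier restriction method, exactly as in \cite{deb1,Oh}. One works in $X^{s,b}$ spaces with $b$ slightly less than $1/2$ to accommodate the forcing, restricts to mean-zero data (the mean is conserved up to the additive noise, which can be handled by a Galilean shift), and uses the periodic bilinear estimate $\|\partial_x(fg)\|_{X^{s,b-1}}\lesssim\|f\|_{X^{s,b}}\|g\|_{X^{s,b}}$. This yields a contraction on a short interval whose length depends only on $\|\xi\|_{H^1}$ and on a pathwise norm of $Z$ in $X^{1,b}$ (or on $\sup_t\|Z\|_{H^{1}}$ plus a Strichartz-type quantity).

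Uniqueness follows from the contraction together with a standard continuity argument, and measurability and adaptedness of $u$ follow because $u$ is a pathwise continuous functional of $Z$.

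The main obstacle is globalizing the solution, i.e.\ obtaining an a priori bound on $\|u(t)\|_{H^1}$ on $[0,T]$, since the local existence time could otherwise shrink. The plan is to use the conserved quantities of KdV with It\^o's formula, applied to smooth Galerkin or regularized approximations and then passed to the limit:
\begin{itemize}
\item[(i)] Mass. It\^o's formula gives $d\|u\|^2=2\varepsilon\langle u,Q^{1/2}dW\rangle+\varepsilon^2\|Q^{1/2}\|_{\mathcal L_2^0}^2dt$, because both $\langle u,\partial_x^3u\rangle$ and $\langle u,\partial_x(u^2)\rangle$ vanish. A BDG estimate then controls $\mathbb E\sup_t\|u\|^{2p}$.
\item[(ii)] Energy. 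Take $H(u)=\tfrac12\|\partial_xu\|^2+\tfrac{\mu}{3}\int u^3$ (with the sign chosen so that $H$ is conserved by the deterministic flow). It\^o's formula gives $dH=\langle H'(u),\varepsilon Q^{1/2}dW\rangle+\tfrac{\varepsilon^2}{2}\operatorname{Tr}(H''(u)Q)\,dt$, where the It\^o correction is bounded by $\varepsilon^2\bigl(\|Q^{1/2}\|_{\mathcal L_2^1}^2+\|u\|_{L^\infty}\|Q^{1/2}\|_{\mathcal L_2^0}^2\bigr)$.
\item[(iii)] The cubic term is controlled by Gagliardo--Nirenberg, $|\int u^3|\le\|u\|^{5/2}\|\partial_xu\|^{1/2}$, and absorbed together with the mass bound.
\end{itemize}
This yields $\mathbb E\sup_{t\le T}\|u(t)\|_{H^1}^{p}<\infty$, so blow-up cannot occur before $T$ almost surely. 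Justifying the It\^o computations requires the smooth approximation and the stability of the local theory.
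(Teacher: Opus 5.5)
The paper does not prove this proposition; it quotes it from \cite{deb2,deb0,deb1,Oh}. Your outline follows the strategy of that literature, and the globalization half is sound. The mass identity, the energy $\tfrac12\|\partial_x u\|^2+\tfrac{\mu}{3}\int u^3$ with its It\^o correction, and the stopping-time, BDG and Gronwall argument are essentially the computations the paper carries out in Appendix~\ref{appen_h1b} for $u^\varepsilon$ via the functional $V$.

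The local step, however, fails as you state it. On $\mathbb T$ the bilinear estimate $\|\partial_x(fg)\|_{X^{s,b-1}}\lesssim\|f\|_{X^{s,b}}\|g\|_{X^{s,b}}$ holds (for mean-zero functions, $s\ge-\tfrac12$) only at $b=\tfrac12$. For every $b<\tfrac12$ and every $s$ it is false. Put $g$ at frequency $1$ with modulation $O(1)$, and $f$ at frequency $N$ with $\tau_1\approx(N+1)^3-1$, i.e.\ modulation $\approx 3N^2$. Then $fg$ lies on the Airy curve at frequency $N+1$. The left side is $\sim N^{1+s}$ and the right side $\sim N^{s+2b}$, a loss of $N^{1-2b}$: the whole resonance $3kk_1k_2$ is paid by an input weighted only by $\langle\tau_1-k_1^3\rangle^{b}$.

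Swapping roles, so that the low-frequency factor carries the modulation $\sim N^2$, shows the following for the term $\partial_x(vZ)$: if all you retain is $Z\in X^{s',b}$ with $b<\tfrac12$, no spatial regularity of $Z$ helps. And $b=\tfrac12$ is not available. Each twisted mode $t\mapsto\widehat{(S(-t)Z(t))}_k$ is a Brownian-type martingale, a.s.\ not in $H^{1/2}_t$, so $Z\notin X^{s,1/2}$ even locally in time.

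Your fallback, $\sup_t\|Z\|_{H^1}$ plus a Strichartz quantity, is weaker still. The function $Z=e^{i((3N^2+3N+1)t+x)}$ is bounded in all such norms. Yet against $v=e^{i(N^3t+Nx)}$ the product is exactly resonant, and $\partial_x(vZ)$ costs a full derivative in $X^{1,-1/2}$. So the contraction cannot be closed with the information you keep on $Z$.

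What is missing is a bilinear estimate that exploits the actual modulation structure of the stochastic convolution: its high-modulation parts are small, reflecting Brownian time regularity. This must be done in modified or $b=\tfrac12$-type Bourgain spaces, and it is the genuinely nontrivial input of the periodic stochastic KdV works you cite. With such a local theory, whose existence time depends on $\|u(t_0)\|_{H^1}$ and an a.s.\ finite norm of $Z$ on $[0,T]$, the rest of your argument goes through. Deferring to those papers is legitimate, and it is what the paper does; but the mechanism you write down in their place does not work.
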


\subsection{Time-dependent spaces and basic estimates}

We next introduce the functional spaces used in the subsequent analysis. For \(s\in\mathbb N_0\) and \(0\le t_1\le t_2\), define
\[
    \mathbbm L_{t_1,t_2}^{\infty,s}
    :=
    \left\{f:[t_1,t_2]\to H^s:
    \|f\|_{L^\infty([t_1,t_2];H^s)}<\infty\right\}.
\]
When \(t_1=0\), we use the abbreviation
\[
    \mathbbm L_{0,t}^{\infty,s}=\mathbbm L_t^{\infty,s},
    \qquad t\ge0.
\]
Similarly, for \(p\in[1,\infty)\), \(s\in\mathbbm{N}_0\), and \(0\le t_1\le t_2\), set
\[
    \mathcal L_{t_1,t_2}^{\infty,p,s}
    :=
    \left\{v:[t_1,t_2]\to L^p(\Omega;H^s):
    \|v\|_{L^\infty([t_1,t_2];L^p(\Omega;H^s))}<\infty\right\}.
\]
Again, we write
\[
    \mathcal L_{0,t}^{\infty,p,s}=\mathcal L_t^{\infty,p,s},
    \qquad t\ge0.
\]

The following one-dimensional Gagliardo--Nirenberg inequalities will be used repeatedly: for every \(f\in H^1\),
\begin{align}
    \|f\|_{L^\infty}
    &\lesssim
    \|f\|^{1/2}\|\partial_x f\|^{1/2},
    \label{gn1}
    \\
    \|f\|_{L^3}
    &\lesssim
    \|\partial_x f\|^{1/6}\|f\|^{5/6}.
    \label{gn2}
\end{align}
We shall also use the algebra property of Sobolev spaces: for \(s>1/2\),
\begin{equation}
\label{bil_est}
    \|fg\|_{H^s}
    \lesssim
    \|f\|_{H^s}\|g\|_{H^s},
    \qquad f,g\in H^s.
\end{equation}

When the functions have zero spatial mean, the homogeneous seminorm controls the lower-order norms. More precisely, if \(\widehat f_0=0\), then, for every \(\sigma\ge0\),
\begin{equation}
\label{hom_b1}
    \|f\|\le \|f\|_{\dot H^\sigma}.
\end{equation}
If, in addition, \(\sigma>1/2\), then
\begin{equation}
\label{hom_b2}
    \|f\|_{L^\infty}
    \lesssim
    \|f\|_{\dot H^\sigma}.
\end{equation}
Finally, for zero-mean functions and \(s>1/2\), we use the homogeneous bilinear estimate
\begin{equation}
\label{bil_est_hom}
    \|fg\|_{\dot H^s}
    \lesssim
    \|f\|_{\dot H^s}\|g\|_{\dot H^s}.
\end{equation}

We will frequently apply It\^o's formula to functionals of solutions of SPDEs. In some places, the formal regularity required by It\^o's formula may not be available directly \cite{dap0}. The identities used below can nevertheless be justified by a standard regularization procedure, for example by applying the arguments to Galerkin approximations and then passing to the limit \cite{deb2}.

\section{\texorpdfstring{Perturbative decomposition and linearization argument}{Perturbative decomposition and linearization argument}}
\label{lin_section}

In this section, we present the main idea behind the construction of our temporal discretization of the stochastic KdV equation \eqref{kdv_eq}.
We first split the exact stochastic solution into a deterministic KdV component and a rescaled stochastic fluctuation. We then introduce a linearized stochastic equation for this fluctuation and prove that, in the small-noise regime, the fluctuation is approximated by its linearization with a strong error of size $\mathcal{O}(\varepsilon)$.

Let \(u\) be the mild solution to \eqref{kdv_eq} with nonrandom initial datum $u(0)=\xi$. We denote by \(\psi\) the solution of the deterministic KdV equation with the same initial datum, namely
\begin{equation}\label{ans0}
\dot{\psi}
=
-\partial_x^3\psi+\mu\partial_x(\psi^2),
\qquad \psi(0)=\xi.
\end{equation}
Define the process
\begin{equation}
    \label{fluct}
u^\varepsilon(t):=\frac{1}{\varepsilon}(u(t)-\psi(t)).
\end{equation}
Equivalently, 
\begin{equation}\label{ans}
u(t)=\psi(t)+\varepsilon u^\varepsilon(t).
\end{equation}
This identity separates the deterministic evolution from the random perturbation and is the starting point of the small-noise analysis.

Using \eqref{kdv_eq} and \eqref{ans0}, we get
\begin{align*}
    d u^\varepsilon (t)&=\frac{1}{\varepsilon}\big(d u(t)-d \psi(t)\big)\\
    &=\frac{1}{\varepsilon}\big[-\partial_x^3 u(t)dt+\mu \partial_x( u(t)^2 )dt+\varepsilon Q^{1/2}dW(t)+\partial_x^3 \psi(t)dt-\mu\partial_x (\psi(t)^2)dt\big]\\
    &=-\partial_x^3 u^\varepsilon(t)dt+Q^{1/2}dW(t)+\mu \partial_x \big((u(t)+\psi(t))u^\varepsilon(t)\big)dt\\
    &=-\partial_x^3 u^\varepsilon(t)dt+Q^{1/2}dW(t)+\mu \partial_x (u^\varepsilon(t) \psi(t))dt+\mu\partial_x (u^\varepsilon(t)u(t))dt.
\end{align*}
Moreover, by \eqref{fluct}, 
\begin{align*}
u^\varepsilon(t)u(t) &= u^\varepsilon(t)\big(\frac{1}{\varepsilon}(u(t)-\psi(t))\varepsilon+\psi(t)\big)\\
    &=\varepsilon(u^\varepsilon(t))^2+u^\varepsilon(t)\psi(t).
\end{align*}
Therefore the fluctuation \(u^\varepsilon\) satisfies
\begin{equation}\label{KdVs}
du^\varepsilon(t)
=
\Bigl(
-\partial_x^3u^\varepsilon(t)
+2\mu\partial_x(\psi(t) u^\varepsilon(t))
+\mu\varepsilon\partial_x\bigl((u^\varepsilon(t))^2\bigr)
\Bigr)\,dt
+Q^{1/2}\,dW(t),
\end{equation}
with initial condition $u^\varepsilon(0)=0$.
Equation \eqref{KdVs} is a stochastic KdV-type equation for the fluctuation. It contains an \(\mathcal O(1)\) additive noise term, a linear transport term \(2\mu\partial_x(\psi u^\varepsilon)\) determined by the deterministic solution \(\psi\), and a quadratic nonlinear term of size \(\mathcal O(\varepsilon)\).

\begin{remark}
\label{wp_stoc}
By the classical well-posedness theory for the periodic KdV equation
\cite{bour0,gub,tao}, if \(\xi\in H^\sigma\) for some
\(\sigma\in \mathbbm N\), then, for every \(T>0\),
\[
\psi\in C([0,T];H^\sigma).
\]

\end{remark}

The small parameter \(\varepsilon\) suggests that the quadratic term in \eqref{KdVs} should be treated as a perturbation. We therefore introduce the linearized fluctuation equation obtained by neglecting the \(\mathcal O(\varepsilon)\) nonlinear term. This gives the auxiliary linear stochastic problem
\begin{equation}\label{lin_eq}
d\chi(t)
=
\bigl(-\partial_x^3\chi(t)+2\mu\partial_x(\psi(t)\chi(t))\bigr)\,dt
+Q^{1/2}\,dW(t),
\qquad \chi(0)=0 .
\end{equation}
The process \(\chi\) captures the leading stochastic fluctuation around the deterministic KdV solution \(\psi\).

We give the following lemma, whose proof is put in Appendix \ref{appen_h1b}.

%\rev{In what follows, $C$ denotes a positive nonrandom constant that may change from line to line and is independent of the truncation parameter $R$ introduced in the proof below.}

\begin{lemma}\label{lem_H1b}
Suppose that $Q^{1/2}\in\mathcal{L}_2^1$ and $\xi \in H^2$. Then the linearized equation \eqref{lin_eq} admits a unique solution satisfying $\chi \in C([0,T]; H^1)$. Moreover, for every $p\in\mathbbm{N}$,
\begin{equation}
\label{es_t}
\mathbbm{E}\Big[\sup_{t\in[0,T]}\|\chi(t)\|_{H^1}^{2p}\Big]
+
\sup_{\varepsilon\in(0,1)}
\mathbbm{E}\Big[\sup_{t\in[0,T]}\|u^\varepsilon(t)\|_{H^1}^{2p}\Big]
<\infty.
\end{equation}
\end{lemma}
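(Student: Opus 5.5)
We argue by energy estimates obtained from It\^o's formula applied to the $L^2$ norm and to an $H^1$-level functional. Because the drift of \eqref{lin_eq} is linear, no exponential-moment argument is needed for $\chi$. Throughout we use Remark~\ref{wp_stoc} with $\sigma=2$, which gives $\psi\in C([0,T];H^2)$, and hence by \eqref{gn1} $\sup_{t\le T}\|\psi(t)\|_{W^{1,\infty}}\le C$.

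\textbf{Step 1: existence and uniqueness of $\chi$.} Existence and uniqueness for \eqref{lin_eq} follow by a Galerkin (Fourier truncation) approximation $\chi^N$, or by a fixed point for the mild form, using the a priori bounds derived below. Uniqueness is immediate: the difference of two solutions solves the deterministic linear equation with zero data. Continuity in $H^1$ follows from the mild formulation together with the stochastic convolution being in $C([0,T];H^1)$, since $Q^{1/2}\in\mathcal L_2^1$.

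\textbf{Step 2: $L^2$ and $H^1$ bounds for $\chi$.} It\^o's formula gives
\[
d\|\chi\|^2=4\mu\langle \partial_x(\psi\chi),\chi\rangle dt+\|Q^{1/2}\|_{\mathcal L_2^0}^2dt+2\langle\chi,Q^{1/2}dW\rangle .
\]
The Airy term is antisymmetric. Integrating by parts, $\langle\partial_x(\psi\chi),\chi\rangle=\tfrac12\langle\partial_x\psi,\chi^2\rangle$, which is bounded by $C\|\chi\|^2$. For the $H^1$ level we apply It\^o to $\|\partial_x\chi\|^2$. The transport term gives
\[
\langle\partial_x^2(\psi\chi),\partial_x\chi\rangle=\langle\partial_x^2\psi\,\chi+\tfrac32\partial_x\psi\,\partial_x\chi,\partial_x\chi\rangle,
\]
after integrating $\psi\partial_x^2\chi\,\partial_x\chi$ by parts. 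This is bounded by $\|\partial_x^2\psi\|\,\|\chi\|_{L^\infty}\|\partial_x\chi\|+C\|\partial_x\chi\|^2\lesssim\|\chi\|_{H^1}^2$, using \eqref{gn1} and $\psi\in H^2$; this is exactly where $\xi\in H^2$ is needed. The trace term is $\|Q^{1/2}\|_{\mathcal L_2^1}^2$. Raising to the power $p$, taking the supremum in time, and applying Burkholder--Davis--Gundy to the martingale term $\int\langle\chi,Q^{1/2}dW\rangle_{H^1}$ bounds it by $\tfrac12\mathbb E\sup\|\chi\|_{H^1}^{2p}+C$ via Young. Gronwall then gives the bound for $\chi$ in \eqref{es_t}. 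These computations are done at the Galerkin level, uniformly in $N$, and passed to the limit.

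\textbf{Step 3: bounds for $u^\varepsilon$, uniform in $\varepsilon$ (main obstacle).} At the $L^2$ level the extra term $\mu\varepsilon\partial_x((u^\varepsilon)^2)$ vanishes against $u^\varepsilon$, so the $L^2$ bound is identical to Step 2 and uniform in $\varepsilon$. At the $H^1$ level, however, $\varepsilon\langle\partial_x^2((u^\varepsilon)^2),\partial_x u^\varepsilon\rangle=\varepsilon\langle\partial_x u^\varepsilon,(\partial_x u^\varepsilon)^2\rangle$ up to constants, which is cubic and not controlled by Gronwall. The plan is to use the modified KdV energy
\[
E(v)=\tfrac12\|\partial_x v\|^2+\tfrac{\mu}{3}\int v^3
\]
adapted to \eqref{kdv_eq}. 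We apply It\^o to $E(u)$ for the full solution $u=\psi+\varepsilon u^\varepsilon$, since $E$ is conserved by the deterministic KdV flow. This gives $dE(u)=$ It\^o correction $+\,\varepsilon\langle E'(u),Q^{1/2}dW\rangle$, with correction $\tfrac{\varepsilon^2}{2}\mathrm{tr}(E''(u)Q)\lesssim\varepsilon^2(1+\|u\|_{L^\infty})$.

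The cleaner alternative is to work directly with
\[
F(u^\varepsilon)=\tfrac12\|\partial_xu^\varepsilon\|^2+\mu\int\psi(u^\varepsilon)^2+\tfrac{\mu\varepsilon}{3}\int(u^\varepsilon)^3 .
\]
This is the expansion of $\varepsilon^{-2}\big(E(\psi+\varepsilon u^\varepsilon)-E(\psi)-\varepsilon E'(\psi)u^\varepsilon\big)$ up to sign conventions. In its evolution the cubic terms cancel, the remaining drift terms are quadratic with coefficients depending on $\psi\in H^2$, and $\partial_t\psi$ terms are handled via the equation for $\psi$ after integration by parts. The cubic piece of $F$ is controlled by \eqref{gn2}: $\varepsilon\|u^\varepsilon\|_{L^3}^3\le\tfrac14\|\partial_xu^\varepsilon\|^2+C\|u^\varepsilon\|^{10/3}$, using $\varepsilon<1$. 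This makes $F$ coercive modulo powers of the already bounded $L^2$ norm. BDG and Gronwall, applied after localizing with a stopping time $\tau_R=\inf\{t:\|u^\varepsilon\|_{H^1}>R\}$ and letting $R\to\infty$, then yield the uniform $2p$-moment bound. I expect the bookkeeping of the cancellation in $dF$ — checking that no term requires more than $\psi\in H^2$ and $u^\varepsilon\in H^1$ — to be the delicate part.
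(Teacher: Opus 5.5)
Your proposal is correct in its main line and shares the paper's skeleton. It applies It\^o's formula with the stopping time $\tau_R$ to $\|u^\varepsilon\|^{2p}$, where the $\varepsilon$-term drops out. It then applies It\^o to a cubic-corrected $H^1$ energy made coercive via \eqref{gn2}, and closes with BDG and Gronwall. For $\chi$, your $\|\partial_x\chi\|^2$ computation is exactly what the paper's functional reduces to at $\varepsilon=0$.

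The real difference is the functional used for $u^\varepsilon$. The paper uses $V(v)=\frac12\|\partial_xv\|^2+\frac{\mu\varepsilon}{3}\int v^3$. With it, only the two cubic contributions $\pm\mu\varepsilon\langle\partial_x^3u^\varepsilon,(u^\varepsilon)^2\rangle$ cancel. The paper then bounds one by one the transport terms $\langle\partial_x\psi,(\partial_xu^\varepsilon)^2\rangle$ and $\langle\partial_x^2\psi,u^\varepsilon\partial_xu^\varepsilon\rangle$, and a leftover term proportional to $\varepsilon\mu^2\langle(u^\varepsilon)^3,\partial_x\psi\rangle$.

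Your $F$ adds $\mu\int\psi(u^\varepsilon)^2$, which makes it the full second variation of the KdV Hamiltonian at $\psi$. Since \eqref{KdVs} reads $du^\varepsilon=\partial_x D_wF(u^\varepsilon)\,dt+Q^{1/2}dW$ with $D_wF(w)=-\partial_x^2w+2\mu\psi w+\mu\varepsilon w^2$, the whole $u^\varepsilon$-drift contributes $\langle D_wF,\partial_xD_wF\rangle=0$, not just its cubic part. Apart from the It\^o correction, only $\mu\langle\partial_t\psi,(u^\varepsilon)^2\rangle=\mu\langle\partial_x^2\psi-\mu\psi^2,\partial_x((u^\varepsilon)^2)\rangle$ survives, and this again needs only $\psi\in H^2$. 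So you trade the paper's term-by-term bookkeeping for a structural cancellation. The price is using the $\psi$-equation and an extra $\|\psi\|_{L^\infty}\|u^\varepsilon\|^2$ in the coercivity bound.

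Two small caveats. First, It\^o's formula for $E(u)$ alone controls $u$ but not $u^\varepsilon=(u-\psi)/\varepsilon$ uniformly in $\varepsilon$. It is $F$ (equivalently, also tracking $\langle E'(\psi),u^\varepsilon\rangle$) that does the work, so drop the first option. Second, in Step~1 neither a contraction in $C([0,T];H^1)$ nor $H^1$-continuity can be read off the mild formulation directly. The reason is that $\partial_x(\psi\chi)$ loses a derivative and the Airy group does not smooth on $\mathbbm{T}$. Rely on the Galerkin/energy route there; the paper simply cites the literature for this part.
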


% \begin{proof} See Appendix \ref{appen_h1b}.
% \end{proof}

We now quantify the error made by replacing the nonlinear fluctuation \(u^\varepsilon\) with its linearization \(\chi\).

\begin{proposition}
\label{prop_wp_lin}
Assume the hypotheses of Lemma \ref{lem_H1b}. Then, for any $p\in\mathbbm{N}$, there exists a constant $C>0$, depending on $p$, $T$, $\|Q^{1/2}\|_{\mathcal{L}_2^1}$, $\displaystyle\sup_{t\in[0,T]}\|\partial_x \psi(t)\|_{L^\infty}$, $\|\chi\|_{\mathcal{L}_{0,T}^{\infty,4p,1}}$, and $\displaystyle\sup_{\varepsilon\in(0,1)}\|u^{\varepsilon}\|_{\mathcal{L}_{0,T}^{\infty,8p,1}}$, such that
$$
\sup_{\varepsilon\in(0,1)}\sup_{t\in[0,T]}
\|u^{\varepsilon}(t)-\chi(t)\|_{L^{2p}(\Omega;L^2)}
\le C\varepsilon.
$$
\end{proposition}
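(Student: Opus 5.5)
Set $e(t):=u^\varepsilon(t)-\chi(t)$. Subtracting \eqref{lin_eq} from \eqref{KdVs}, the additive noise cancels and $e$ solves the random PDE (pathwise, no stochastic integral)
\[
\partial_t e=-\partial_x^3 e+2\mu\partial_x(\psi e)+\mu\varepsilon\partial_x\bigl((u^\varepsilon)^2\bigr),\qquad e(0)=0 .
\]
The plan is a pathwise $L^2$ energy estimate followed by Gronwall and then taking moments. Since there is no It\^o correction, we only need the chain rule for $\|e\|^2$, justified by Galerkin regularization as indicated at the end of Section~\ref{ass_section}.

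Compute $\frac{d}{dt}\|e\|^2=2\langle e,-\partial_x^3e\rangle+4\mu\langle e,\partial_x(\psi e)\rangle+2\mu\varepsilon\langle e,\partial_x((u^\varepsilon)^2)\rangle$. The Airy term vanishes by skew-adjointness on the torus. For the transport term, integration by parts gives $\langle e,\partial_x(\psi e)\rangle=\langle e,\psi_x e\rangle+\langle e,\psi e_x\rangle$. Moreover $\langle e,\psi e_x\rangle=-\tfrac12\langle \psi_x e,e\rangle$. Hence
\[
4\mu\langle e,\partial_x(\psi e)\rangle=2\mu\int\psi_x e^2\,dx\le 2\mu\sup_t\|\partial_x\psi\|_{L^\infty}\|e\|^2 .
\]
This is where $\sup_t\|\partial_x\psi\|_{L^\infty}$ enters; it is finite since $\xi\in H^2$ gives $\psi\in C([0,T];H^2)\subset W^{1,\infty}$ by Remark~\ref{wp_stoc}. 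For the forcing term I would not integrate by parts onto $e$, since $e\in H^1$ only in moments. Instead bound
\[
2\mu\varepsilon|\langle e,2u^\varepsilon\partial_xu^\varepsilon\rangle|\le 4\mu\varepsilon\|e\|\,\|u^\varepsilon\|_{L^\infty}\|\partial_xu^\varepsilon\|\lesssim \|e\|^2+\varepsilon^2\|u^\varepsilon\|_{H^1}^4 ,
\]
using \eqref{gn1}.

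Gronwall on $[0,t]$ with $e(0)=0$ then yields, pathwise,
\[
\sup_{s\le t}\|e(s)\|^2\le C\varepsilon^2 e^{Ct}\int_0^t\|u^\varepsilon(s)\|_{H^1}^4\,ds ,
\]
with $C$ depending only on $\mu$ and $\sup\|\partial_x\psi\|_{L^\infty}$. Raising this to the power $p$, taking expectation, and using H\"older in time gives
\[
\mathbb E\|e(t)\|^{2p}\le C\varepsilon^{2p}T^{p-1}\int_0^T\mathbb E\|u^\varepsilon\|_{H^1}^{4p}\,ds\le C\varepsilon^{2p}\sup_{\varepsilon}\|u^\varepsilon\|^{4p}_{\mathcal L^{\infty,4p,1}_{0,T}} .
\]
This is finite uniformly in $\varepsilon$ by Lemma~\ref{lem_H1b}, and we conclude $\|e(t)\|_{L^{2p}(\Omega;L^2)}\le C\varepsilon$.

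The moment orders listed in the statement ($\chi$ in $L^{4p}$, $u^\varepsilon$ in $L^{8p}$) suggest the authors instead write $(u^\varepsilon)^2-\chi^2$ or split via Cauchy--Schwarz in $\Omega$. My route needs only the $4p$ moment of $u^\varepsilon$, which is dominated by those quantities anyway.

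The main obstacle is rigor in the energy identity. The forcing $\partial_x((u^\varepsilon)^2)$ lies only in $L^2$, and $e$ is merely $C([0,T];H^1)$, so the term $\langle e,\partial_x^3 e\rangle$ is not defined directly. I would carry out the computation for spectral Galerkin truncations of both \eqref{KdVs} and \eqref{lin_eq} with the same $\psi$ and noise, where all terms are smooth. The constants obtained there are uniform in the truncation, and one passes to the limit using convergence of the Galerkin solutions in $C([0,T];L^2)$ together with Fatou's lemma.
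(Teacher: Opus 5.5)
Your proof is correct. It follows the same overall scheme as the paper: an energy estimate on $u^\varepsilon-\chi$, cancellation of the noise, control of the transport term by $\|\partial_x\psi\|_{L^\infty}$, and Gronwall. It differs in how the nonlinear term is handled and in where the expectation is taken.

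The paper applies It\^o's formula to $\|w^\varepsilon\|^{2p}$ and takes expectations before using Gronwall. In the nonlinear term it splits $u^\varepsilon=w^\varepsilon+\chi$. It bounds $\varepsilon\int (w^\varepsilon)^2\partial_xu^\varepsilon$ via \eqref{gn1} by $\varepsilon\|w^\varepsilon\|^{3/2}\|\partial_xw^\varepsilon\|^{1/2}\|\partial_xu^\varepsilon\|$, and $\varepsilon\int w^\varepsilon\chi\,\partial_xu^\varepsilon$ via $\|\chi\|_{L^\infty}$. After absorbing with Young's inequality, it controls the remainder $\mathbbm{E}[K(\chi,u^\varepsilon)^p]$ by Cauchy--Schwarz in $\Omega$. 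That step is exactly where the $4p$-moments of $\chi$ and the $8p$-moments of $u^\varepsilon$ in the statement come from.

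You instead bound $\langle e,u^\varepsilon\partial_xu^\varepsilon\rangle$ directly by $\|e\|\,\|u^\varepsilon\|_{L^\infty}\|\partial_xu^\varepsilon\|$, which already carries the needed $L^2$ factor of $e$, and you run Gronwall pathwise before taking moments. This gives a stronger result under weaker hypotheses:
\begin{itemize}
\item only $\sup_\varepsilon\|u^\varepsilon\|_{\mathcal{L}_{0,T}^{\infty,4p,1}}$ is needed, and no moment of $\chi$ enters;
\item you also get $\mathbbm{E}\sup_{t\le T}\|e(t)\|^{2p}\lesssim\varepsilon^{2p}$, with the supremum inside the expectation.
\end{itemize}
The paper's decomposition gains nothing here, since its bound on $K$ uses $H^1$-moments of $u^\varepsilon$ anyway.

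Two small corrections to your commentary. First, your reason for not integrating by parts in the forcing term is off. Lemma \ref{lem_H1b} gives $e\in C([0,T];H^1)$ almost surely, so $\partial_x e$ is available pathwise. The real reason is that moving the derivative onto $e$ produces $\varepsilon\|\partial_xe\|\,\|u^\varepsilon\|_{L^4}^2$ with no $\|e\|$ factor, and Gronwall would then give only $\|e\|=\mathcal O(\sqrt{\varepsilon})$.

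Second, the justification you flag as the main obstacle is lighter than you suggest.
\begin{itemize}
\item Pathwise, $g:=2\mu\partial_x(\psi e)+\mu\varepsilon\partial_x((u^\varepsilon)^2)$ lies in $C([0,T];L^2)$.
\item $e(t)=\int_0^tS(t-s)g(s)\,ds$, so ${\rm e}^{t\partial_x^3}e(t)=\int_0^t{\rm e}^{s\partial_x^3}g(s)\,ds$ is $C^1([0,T];L^2)$ and has the same $L^2$-norm as $e(t)$.
\item Hence $\frac{d}{dt}\|e\|^2=2\langle e,g\rangle$ holds directly.
\end{itemize}
Your integration by parts in $\langle e,\partial_x(\psi e)\rangle$ is then legitimate because $e\in H^1$. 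No Galerkin approximation of the nonlinear equation \eqref{KdVs} is needed, and its convergence at the $H^1$ level for KdV is not elementary anyway.
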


\begin{proof}
Let
$
w^\varepsilon(t):=u^\varepsilon(t)-\chi(t).
$  The stochastic forcing cancels in the difference. 
By applying It\^o's formula to $\|w^\varepsilon(t)\|^{2p}$, and using integration by parts, we obtain
\begin{equation*}
d\|w^\varepsilon(t)\|^{2p}
=
4\mu p\,\|w^\varepsilon(t)\|^{2p-2}
\bigl(I_1(t)+\varepsilon I_2(t)\bigr)\,dt,
\end{equation*}
where
$$
I_1(t):=\frac12\big\langle (w^\varepsilon(t))^2,\partial_x \psi(t)\big\rangle,
\qquad
I_2(t):=-\big\langle w^\varepsilon(t),
u^\varepsilon(t)\partial_x u^\varepsilon(t)\big\rangle.
$$
The term $I_1$ is controlled by
$$
I_1(t)
\le \frac12 \|\partial_x \psi(t)\|_{L^\infty}\|w^\varepsilon(t)\|^2.
$$
Therefore,
\begin{align}
d\|w^\varepsilon(t)\|^{2p}
\le{}&
2\mu p\,\|\partial_x \psi(t)\|_{L^\infty}
\|w^\varepsilon(t)\|^{2p}\,dt+4\mu p\,\varepsilon\,\|w^\varepsilon(t)\|^{2p-2}\,I_2(t)\,dt .
\label{f10}
\end{align}

It remains to estimate the nonlinear perturbation.
Since \(u^\varepsilon=w^\varepsilon+\chi\), we decompose 
\begin{align*}
\varepsilon I_2(t)
&=
-\varepsilon \int_{\mathbbm{T}}
w^\varepsilon(t)
u^\varepsilon(t)\partial_x u^\varepsilon(t)\,dx \\
&=-
\varepsilon \int_{\mathbbm{T}}
(w^\varepsilon(t))^2\partial_x u^\varepsilon(t)\,dx
-\varepsilon \int_{\mathbbm{T}}
w^\varepsilon(t)
\chi(t)\partial_x u^\varepsilon(t)\,dx .
\end{align*}
For the first term, by the Cauchy--Schwarz inequality, the Gagliardo--Nirenberg inequality \eqref{gn1} and Young's inequality, we have
\begin{align*}
-\varepsilon \int_{\mathbbm{T}}
(w^\varepsilon(t))^2\partial_x u^\varepsilon(t)\,dx
&\le
\varepsilon \|w^\varepsilon(t)\|_{L^\infty}\|w^\varepsilon(t)\|
\|\partial_x u^\varepsilon(t)\| \\
&\le
C\varepsilon \|w^\varepsilon(t)\|^{3/2}
\|\partial_x w^\varepsilon(t)\|^{1/2}
\|\partial_x u^\varepsilon(t)\| \\
&\le
\frac{3C}{4}\|w^\varepsilon(t)\|^2
+\frac{C}{4}\varepsilon^4
\|\partial_x w^\varepsilon(t)\|^2
\|\partial_x u^\varepsilon(t)\|^4 .
\end{align*} 
Similarly, for the second term, 
\begin{align*}
-\varepsilon \int_{\mathbbm{T}}
w^\varepsilon(t)
\chi(t)\partial_x u^\varepsilon(t)\,dx
&\le
\varepsilon \|w^\varepsilon(t)\|\|\chi(t)\|_{L^\infty}
\|\partial_x u^\varepsilon(t)\| \\
&\le
C\varepsilon \|w^\varepsilon(t)\|
\|\chi(t)\|^{1/2}\|\partial_x\chi(t)\|^{1/2}
\|\partial_x u^\varepsilon(t)\| \\
&\le
\frac{C}{2}\|w^\varepsilon(t)\|^2
+\frac{C}{2}\varepsilon^2
\|\chi(t)\|\|\partial_x\chi(t)\|
\|\partial_x u^\varepsilon(t)\|^2 .
\end{align*}
Combining the above estimates, we obtain
\begin{align*}
\varepsilon I_2(t)
\le
\frac{5C}{4}\|w^\varepsilon(t)\|^2
+\frac{C}{4}\varepsilon^4
\|\partial_x w^\varepsilon(t)\|^2
\|\partial_x u^\varepsilon(t)\|^4
+\frac{C}{2}\varepsilon^2
\|\chi(t)\|\|\partial_x\chi(t)\|
\|\partial_x u^\varepsilon(t)\|^2 .
\end{align*}
Since $\varepsilon\in(0,1)$, it follows that
$$
\varepsilon^4
\|\partial_x w^\varepsilon(t)\|^2
\|\partial_x u^\varepsilon(t)\|^4
\le
\varepsilon^2 
\|\partial_x w^\varepsilon(t)\|^2
\|\partial_x u^\varepsilon(t)\|^4 .
$$
Therefore,
\begin{align}
\label{f1}
\varepsilon I_2(t)
\le
\frac{5C}{4}\|w^\varepsilon(t)\|^2
+\varepsilon^2 K\bigl(\chi(t),u^\varepsilon(t)\bigr),
\end{align}
where
\begin{align*}
K\bigl(\chi(t),u^\varepsilon(t)\bigr)
&:=
\frac{C}{2}\|\partial_x u^\varepsilon(t)\|^2
\bigg[
\|\chi(t)\|\,\|\partial_x\chi(t)\|
+\frac{1}{2}
\|\partial_x u^\varepsilon(t)-\partial_x\chi(t)\|^2
\|\partial_x u^\varepsilon(t)\|^2
\bigg].
\end{align*}
Substituting \eqref{f1} into \eqref{f10}, we find
\begin{align*}
d\|w^\varepsilon(t)\|^{2p}
\le{}&
\Big(
2\mu p\,\|\partial_x\psi(t)\|_{L^\infty}
+5\mu p C
\Big)
\|w^\varepsilon(t)\|^{2p}\,dt+
4\mu p\,\varepsilon^2\,
\|w^\varepsilon(t)\|^{2p-2}
K\bigl(\chi(t),u^\varepsilon(t)\bigr)\,dt.
\end{align*}
Integrating in time and applying Young's inequality to the last term, we obtain
\begin{align*}
\mathbbm{E}\big[\|w^\varepsilon(t)\|^{2p}\big]
\le{}&
\Big(
2\mu p\,\sup_{t\in[0,T]}\|\partial_x\psi(t)\|_{L^\infty}
+5\mu p C
+4\mu(p-1)
\Big)
\int_0^t
\mathbbm{E}\big[\|w^\varepsilon(s)\|^{2p}\big]\,ds \\
&+
4\mu t\,\varepsilon^{2p}
\sup_{t\in[0,T]}
\mathbbm{E}\Big[
K\bigl(\chi(t),u^\varepsilon(t)\bigr)^p
\Big].
\end{align*}
We now bound the last factor.  
Using the Cauchy--Schwarz inequality, we obtain
\begin{align*}
&\mathbbm{E}\Big[
K\bigl(\chi(t),u^\varepsilon(t)\bigr)^p
\Big] \\
&\qquad \le
2^{p-1}\Big(\frac{C}{2}\Big)^p
\mathbbm{E}\Big[
\|\partial_x u^\varepsilon(t)\|^{2p}
\|\chi(t)\|^p
\|\partial_x\chi(t)\|^p
\Big] \\
&\qquad\quad +
2^{p-1}\Big(\frac{C}{4}\Big)^p
\mathbbm{E}\Big[
\|\partial_x u^\varepsilon(t)\|^{4p}
\|\partial_x u^\varepsilon(t)-\partial_x\chi(t)\|^{2p}
\Big] \\
&\qquad \le
2^{p-1}\Big(\frac{C}{2}\Big)^p
\Big(
\mathbbm{E}\big[\|\partial_x u^\varepsilon(t)\|^{4p}\big]
\Big)^{1/2}
\Big(
\mathbbm{E}\big[\|\chi(t)\|^{2p}\|\partial_x\chi(t)\|^{2p}\big]
\Big)^{1/2} \\
&\qquad\quad +
2^{p-1}\Big(\frac{C}{4}\Big)^p
\Big(
\mathbbm{E}\big[\|\partial_x u^\varepsilon(t)\|^{8p}\big]
\Big)^{1/2}
\Big(
\mathbbm{E}\big[\|\partial_x u^\varepsilon(t)-\partial_x\chi(t)\|^{4p}\big]
\Big)^{1/2}.
\end{align*}
Thanks to Lemma \ref{lem_H1b} and the assumptions of the proposition, we then infer that 
$$
\sup_{\varepsilon\in(0,1)}\sup_{t\in[0,T]}
\mathbbm{E}\Big[
K\bigl(\chi(t),u^\varepsilon(t)\bigr)^p
\Big]
<\infty.
$$
Therefore,
$$
\mathbbm{E}\big[\|u^\varepsilon(t)-\chi(t)\|^{2p}\big]
\le
C\int_0^t
\mathbbm{E}\big[\|u^\varepsilon(s)-\chi(s)\|^{2p}\big]\,ds
+C\varepsilon^{2p}.
$$
An application of Gronwall's inequality yields
$$
\sup_{\varepsilon\in(0,1)}\sup_{t\in[0,T]}
\mathbbm{E}\big[\|u^\varepsilon(t)-\chi(t)\|^{2p}\big]
\le C\varepsilon^{2p},
$$
and hence
$$
\sup_{\varepsilon\in(0,1)}\sup_{t\in[0,T]}
\|u^\varepsilon(t)-\chi(t)\|_{L^{2p}(\Omega;L^2)}
\le C\varepsilon.
$$
This completes the proof.
\end{proof}

\section{Low-regularity time approximation of the linearized equation}
\label{lw_section}
This section constructs a low-regularity time integrator for the linearized stochastic equation \eqref{lin_eq}.
%\rev{The construction is based on two ingredients. First, we remove the Airy flow by working with twisted variables. Second, after freezing the slowly varying factors over each time interval, we integrate the leading dispersive oscillations exactly in Fourier space. This resonance-based step is the mechanism that avoids a direct loss of derivatives from the third-order operator.}
The derivation relies on the introduction of twisted variables and a suitable exact time integration, in Fourier space, of an integral term arising in the construction.

We start from the mild formulation of \eqref{lin_eq}. Since \(\chi(0)=0\), one has \eqref{lin_eq}:
$$
\chi(t)
=
2\mu \int_{0}^{t}{\rm e}^{-(t-s)\partial_x^3}\partial_x\big(\psi(s)\chi(s)\big)\,ds
+\int_0^t {\rm e}^{-(t-s)\partial_x^3} Q^{1/2}\,dW(s),
$$
where $\psi$ is the solution of the deterministic KdV equation \eqref{ans0}.

\subsection{Zero-mode condition and twisted variables}

As in \cite{kat0}, the following zero-mode condition is imposed throughout the sequel of this paper. Its role will become clear later in this section, in the construction of the proposed low-regularity numerical integrator.

   \begin{ass}[Zero-mode condition]
\label{ass_zero_mode}
We assume that the initial datum has zero spatial mean, namely
\[
\widehat{\xi}_0(0)=0,
\]
and that the noise does not excite the zero Fourier mode. More precisely, the covariance square-root operator satisfies
\[
Q^{1/2}{\bf 1}=0.
\]
\end{ass}

%{\color{green}the goal of introducing assumption 4.1 is unclear at this moment.

%It ensures that the inverse derivative \(\partial_x^{-1}\) appearing below is applied only to zero-mean functions?

% For the stochastic solution $\chi$, Assumption \ref{ass_zero_mode} is satisfied if, in addition, the covariance operator does not excite the zero Fourier mode. A sufficient condition is \(q_0=0\) in the setting of Remark \ref{simul}.
\begin{remark}\label{rmk_zero}
For the deterministic solution \(\psi\), mass conservation holds; see, for instance, \cite{kat0}. Hence, Assumption~\ref{ass_zero_mode} implies
\[
\widehat{\psi}_0(t)=0, \qquad t\ge 0.
\]
For the stochastic solution \(\chi\), since \(\chi(0)=0\), it follows directly from Assumption~\ref{ass_zero_mode} that
\[
\widehat{\chi}_0(t)=0, \qquad t\ge 0,
\]
almost surely. Finally, if the zero-mode condition is not imposed, one may use the modification described in \cite[Remark~1.3]{kat0}. We do not pursue this extension here.
\end{remark}

We next introduce the twisted variables
$$
\tilde{\chi}(t) = {\rm e}^{t\partial_x^3}\chi(t),
\qquad
\tilde{\psi}(t)={\rm e}^{t\partial_x^3}\psi(t).
$$
The Airy group is an isometry on every Sobolev space \(H^\sigma\); hence
\begin{equation}
\label{iso_twi}
\|\widetilde\chi(t)\|_{H^\sigma}=\|\chi(t)\|_{H^\sigma},
\qquad
\|\widetilde\psi(t)\|_{H^\sigma}=\|\psi(t)\|_{H^\sigma},
\qquad \sigma,t\in\mathbbm{R}.
\end{equation}
Then $\tilde{\chi}$ and $\tilde{\psi}$ satisfy
\begin{align}
\label{mild_sol_twis_0}
\tilde{\chi}(t)
=
2\mu \int_{0}^{t}{\rm e}^{s\partial_x^3}
\partial_x\Big(
{\rm e}^{-s\partial_x^3}\tilde{\psi}(s)\,
{\rm e}^{-s\partial_x^3}\tilde{\chi}(s)
\Big)\,ds
+\int_{0}^{t}{\rm e}^{s\partial_x^3}Q^{1/2}\,dW(s),
\end{align}
and
$$
\tilde{\psi}(t)
=
\xi
+\mu \int_{0}^{t}{\rm e}^{s\partial_x^3}
\partial_x\Big(
{\rm e}^{-s\partial_x^3}\tilde{\psi}(s)
\Big)^2\,ds,\quad  \text{respectively.}
$$

We shall also use the following time-regularity estimate for the exact solution of the linearized equation. The proof is put in Appendix \ref{app_time_reg}.

\begin{lemma}
\label{time_reg_est}
Let $Q^{1/2}\in\mathcal{L}_2^1$, and $\xi\in H^2$.
Then, for any $t,s\ge 0$ with $t+s\in[0,T]$, the exact solution $\chi$ to \eqref{lin_eq} satisfies the following time-regularity estimate:
$$
\|\chi(t+s)-\chi(t)\|_{L^2(\Omega;H^1)}\lesssim \sqrt{s}.
$$
\end{lemma}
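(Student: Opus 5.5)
Starting from the mild formulation of \eqref{lin_eq} at times $t+s$ and $t$, we write
\begin{align*}
\chi(t+s)-\chi(t)
={}&\bigl(S(s)-I\bigr)\chi(t)
+2\mu\int_t^{t+s}S(t+s-r)\partial_x\bigl(\psi(r)\chi(r)\bigr)\,dr\\
&+\int_t^{t+s}S(t+s-r)Q^{1/2}\,dW(r)
=:J_1+J_2+J_3 ,
\end{align*}
which follows from the semigroup property $S(t+s-r)=S(s)S(t-r)$. We then estimate each term in $L^2(\Omega;H^1)$.

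The stochastic term is easiest. By It\^o's isometry and the fact that $S$ is an isometry on $H^1$, we get $\mathbbm E\|J_3\|_{H^1}^2=\int_t^{t+s}\|S(t+s-r)Q^{1/2}\|_{\mathcal L_2^1}^2dr=s\|Q^{1/2}\|_{\mathcal L_2^1}^2$. This gives the $\sqrt s$ rate directly and is the source of the optimal exponent.

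The drift term $J_2$ is harder because $\partial_x(\psi\chi)$ only lies in $L^2$ when $\chi\in H^1$, so it costs a derivative in $H^1$. I would avoid measuring $J_2$ in $H^1$ through the integrand naively. Instead, I would use the equation itself and work with the twisted variable. Note $\chi(t+s)-\chi(t)=S(t+s)\tilde\chi(t+s)-S(t)\tilde\chi(t)$, and from \eqref{mild_sol_twis_0},
\[
\tilde\chi(t+s)-\tilde\chi(t)=2\mu\int_t^{t+s}S(-r)\partial_x(\psi\chi)(r)\,dr+\int_t^{t+s}S(-r)Q^{1/2}dW(r).
\]
The derivative loss can be handled by an energy argument. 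Set $\delta(r):=\chi(r)-S(r-t)\chi(t)$ for $r\in[t,t+s]$. It solves $d\delta=(-\partial_x^3\delta+2\mu\partial_x(\psi\delta))dr+2\mu\partial_x(\psi S(r-t)\chi(t))dr+Q^{1/2}dW$ with $\delta(t)=0$. Apply It\^o's formula to $\|\delta\|_{H^1}^2$. The Airy term vanishes. The transport term $\langle\partial_x\delta,\partial_x^2(\psi\delta)\rangle$ is controlled, after integrating by parts, by $\|\psi\|_{W^{2,\infty}}\|\delta\|_{H^1}^2$. Here I use $\xi\in H^2$, and I may need to bound $\|\partial_x^2\psi\|_{L^\infty}$ via $H^3$, or else handle the term via $\|\partial_x^2\psi\|\,\|\partial_x\delta\|_{L^\infty}$ with Gagliardo--Nirenberg. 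The Itô correction contributes $\|Q^{1/2}\|_{\mathcal L_2^1}^2\,ds$, which is $O(s)$.

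The forcing term $\langle\partial_x\delta,\partial_x^2(\psi S(r-t)\chi(t))\rangle$ is where the difficulty lies: it needs two derivatives of $\chi(t)$. I would first try to exploit the fact that $\delta$ is small. Integrating by parts and using Cauchy--Schwarz reduces it to $C\|\delta\|_{H^1}^2+C\|\partial_x(\psi S(r-t)\chi(t))\|_{H^1}^2$, which still needs $\chi(t)\in H^2$. So instead I would not split off $S(r-t)\chi(t)$. Rather, I would bound $J_1+J_2$ jointly as $\delta(t+s)+(S(s)-I)\chi(t)$, and control $(S(s)-I)\chi(t)$ in $H^1$ by $\|(S(s)-I)\chi(t)\|_{H^1}\lesssim s^{1/3}\|\chi(t)\|_{H^2}$. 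This again requires spatial regularity beyond Lemma \ref{lem_H1b}.

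This step is the main obstacle: the Airy part $(S(s)-I)\chi(t)$ gives the $\sqrt s$ rate in $H^1$ only if $\chi(t)\in H^{1+3/2}$, in mean square, uniformly in $t$. I would therefore try to establish an $H^{5/2}$ (or $H^{3}$) moment bound for $\chi$ by the same energy method as in Lemma \ref{lem_H1b}, using the regularity of $\psi$ and $Q^{1/2}$. If the stated hypotheses do not suffice, the honest fallback is that the estimate holds with the stochastic convolution contributing $\sqrt s$ and the remaining terms contributing $O(s)$ under such extra regularity.

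Finally, collect the three bounds and use the moment bounds of Lemma \ref{lem_H1b} together with $\sup_t\|\psi(t)\|_{H^2}<\infty$ from Remark \ref{wp_stoc}.
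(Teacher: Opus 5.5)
You have located the real obstruction, and it cannot be removed under the stated hypotheses. The term $(S(s)-I)\chi(t)$ is in general not $O(\sqrt s)$ in $L^2(\Omega;H^1)$, so the estimate for $\chi$ itself is false.

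\textbf{A counterexample to the statement as written.} Take $\xi=0$, so that $\psi\equiv0$, $J_2=0$, and $\chi$ is the stochastic convolution. Take $Q$ diagonal in the Fourier basis with $q_0=0$ and $q_\ell=|\ell|^{-3-\epsilon}$ for some $0<\epsilon<3$. Then $Q^{1/2}\in\mathcal L_2^1$ and Assumption~\ref{ass_zero_mode} holds. Since $J_3$ has zero conditional mean given $\mathcal F_t$,
\[
\mathbbm E\|\chi(t+s)-\chi(t)\|_{H^1}^2\ \ge\ \mathbbm E\|(S(s)-I)\chi(t)\|_{H^1}^2\ \simeq\ t\sum_{\ell\ne0}(1+\ell^2)\,q_\ell\,\bigl|{\rm e}^{is\ell^3}-1\bigr|^2\ \gtrsim\ t\,s^{\epsilon/3}.
\]
The last bound comes from the roughly $s^{-1/3}$ modes with $s|\ell|^3\in[\pi/3,5\pi/3]$, and $t\,s^{\epsilon/3}$ is not $O(s)$.

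So no argument can prove the lemma as literally stated. Your fallback, an $H^{5/2}$ moment bound for $\chi$, is the correct repair if one insists on the untwisted increment.

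\textbf{Where the paper's proof differs.} The paper proves the bound for $\tilde\chi$ and then cites \eqref{iso_twi}. That isometry holds at a single time only. It does not transfer increments, since
\[
\chi(t+s)-\chi(t)=S(t+s)\bigl(\tilde\chi(t+s)-\tilde\chi(t)\bigr)+(S(s)-I)\chi(t),
\]
and the last term is exactly the one you flagged. What the paper actually uses later is only the twisted bound $\|\tilde\chi(t+s)-\tilde\chi(t)\|_{L^2(\Omega;H^1)}\lesssim\sqrt s$. It enters in the freezing step \eqref{discr} and in the term $L_2$ of Lemma~\ref{Loc_err_lem}. In your notation this is precisely $\|\delta(t+s)\|_{L^2(\Omega;H^1)}$, because $\delta(t+s)=S(t+s)\bigl(\tilde\chi(t+s)-\tilde\chi(t)\bigr)$.

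\textbf{The gap in your argument for the twisted bound.} Your $H^1$ energy estimate for $\delta$ loses a derivative in the forcing term $2\mu\partial_x(\psi\,S(r-t)\chi(t))$. Closing it would require $\chi(t)\in H^2$, and you do not find a way around this. The missing idea is the paper's normal-form step, which works as follows.
\begin{enumerate}
\item Up to constants, the Fourier coefficient at frequency $\ell$ of the twisted drift is $i\ell\sum_{\ell_1+\ell_2=\ell}\int_0^s{\rm e}^{-3i(t+r)\ell_1\ell_2\ell}\,\hat{\tilde\psi}_{\ell_1}(t+r)\,\hat{\tilde\chi}_{\ell_2}(t+r)\,dr$.
\item By Assumption~\ref{ass_zero_mode}, only $\ell_1,\ell_2,\ell\neq0$ occur, so the phase never vanishes.
\item Integrating by parts in $r$ replaces the factor $i\ell$ by $-1/(3\ell_1\ell_2)$. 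Since $|\ell|/|\ell_1\ell_2|\le2$, this recovers the lost derivative.
\item The boundary term is $O(\sqrt s)$ in $L^2(\Omega;\dot H^1)$. This uses $|{\rm e}^{i\theta}-1|\lesssim|\theta|^{1/2}$ and only $H^1$ bounds on $\tilde\psi$ and $\tilde\chi$.
\item The remaining terms involve $d\bigl(\hat{\tilde\psi}_{\ell_1}\hat{\tilde\chi}_{\ell_2}\bigr)$, which you expand with the equations for $\tilde\psi$ and $\tilde\chi$. The drift contributions are $O(s)$ by the algebra property in $H^1$. The It\^o contribution is $O(\sqrt s)$ by It\^o's isometry, with constant controlled by $\|\psi\|_{H^1}\|Q^{1/2}\|_{\mathcal L_2^1}$.
\end{enumerate}
Combined with the stochastic convolution, which you handled correctly, this yields the twisted bound under exactly $\xi\in H^2$ and $Q^{1/2}\in\mathcal L_2^1$. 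No moment bound on $\chi$ beyond $H^1$ is needed.
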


\subsection{Derivation of the low-regularity time approximation}

Let \(\tau>0\), \(t_n=n\tau\), \(n=0,1,\ldots,N\), and \(N\tau=T\). Applying \eqref{mild_sol_twis_0} on the interval \([t_n,t_{n+1}]\), after the change of variables \(s\mapsto t_n+s\), gives
\begin{equation}
\label{mild_sol_twis_stoc}
\begin{aligned}
\widetilde\chi(t_n+\tau)
=
\widetilde\chi(t_n)
&+2\mu \int_0^\tau {\rm e}^{(t_n+s)\partial_x^3}
\partial_x\Bigl(
{\rm e}^{-(t_n+s)\partial_x^3}\widetilde\psi(t_n+s)\,
{\rm e}^{-(t_n+s)\partial_x^3}\widetilde\chi(t_n+s)
\Bigr)\,ds \\
&+\int_{t_n}^{t_n+\tau}{\rm e}^{s\partial_x^3}Q^{1/2}\,dW(s).
\end{aligned}
\end{equation}
Similarly, \(\widetilde\psi\) satisfies
\begin{equation}
\label{mild_sol_twis_det}
\widetilde\psi(t_n+\tau)
=
\widetilde\psi(t_n)
+
\mu \int_0^\tau {\rm e}^{(t_n+s)\partial_x^3}
\partial_x\Bigl(
{\rm e}^{-(t_n+s)\partial_x^3}\widetilde\psi(t_n+s)
\Bigr)^2\,ds.
\end{equation}

The additive stochastic convolution in \eqref{mild_sol_twis_stoc} can be simulated exactly under standard structural assumptions on the covariance operator. We state this point separately because it is useful for implementation.  

\begin{remark}
\label{simul}
Exact simulation of the stochastic convolution in \eqref{met} is available, for example, when \(Q\) is diagonal in the Fourier basis. More precisely, assume that there exists a real sequence \(\{q_\ell\}_{\ell\in\mathbbm{Z}}\) such that
\[
Q {\rm e}^{i\ell x}=q_\ell {\rm e}^{i\ell x},
\qquad \ell\in\mathbbm{Z}.
\]
This is the setting used in the numerical experiments. For a general covariance operator, one may approximate the stochastic convolution by
\[
\int_{t_n}^{t_{n+1}} {\rm e}^{-(t_{n+1}-s)\partial_x^3}Q^{1/2}\,dW(s)
\approx
{\rm e}^{-\tau\partial_x^3}Q^{1/2}\bigl(W(t_{n+1})-W(t_n)\bigr).
\]
This Euler approximation has strong order \(\gamma>0\) in \(H^s\), provided that \(Q^{1/2}\in\mathcal{L}_2^{s+3\gamma}\). In particular, first order in \(L^2\) follows if \(Q^{1/2}\in\mathcal{L}_2^3\).
\end{remark}

It remains to approximate the deterministic integral in \eqref{mild_sol_twis_stoc}. Thanks to the regularity estimate \eqref{time_reg_est}, we are allowed to freeze the twisted variables at time \(t_n\), keeping the oscillatory phases exactly:
\begin{align}
&\int_0^\tau {\rm e}^{(t_n+s)\partial_x^3}
\partial_x\Bigl(
{\rm e}^{-(t_n+s)\partial_x^3}\widetilde\psi(t_n+s)\,
{\rm e}^{-(t_n+s)\partial_x^3}\widetilde\chi(t_n+s)
\Bigr)\,ds \notag\\
&\qquad\approx
\int_0^\tau {\rm e}^{(t_n+s)\partial_x^3}
\partial_x\Bigl(
{\rm e}^{-(t_n+s)\partial_x^3}\widetilde\psi(t_n)\,
{\rm e}^{-(t_n+s)\partial_x^3}\widetilde\chi(t_n)
\Bigr)\,ds.
\label{discr}
\end{align}
The right-hand side can be evaluated exactly. Indeed, write
\[
\widetilde\psi(t_n)=\frac{1}{\sqrt{2\pi}}\sum_{\ell\in\mathbbm{Z}}
\widehat{\widetilde\psi}_\ell(t_n){\rm e}^{i\ell x},
\qquad
\widetilde\chi(t_n)=\frac{1}{\sqrt{2\pi}}\sum_{\ell\in\mathbbm{Z}}
\widehat{\widetilde\chi}_\ell(t_n){\rm e}^{i\ell x}.
\]
Then
\begin{align*}
&\partial_x\Bigl(
{\rm e}^{-(t_n+s)\partial_x^3}\widetilde\psi(t_n)\,
{\rm e}^{-(t_n+s)\partial_x^3}\widetilde\chi(t_n)
\Bigr) \\
&\quad=
\frac{1}{2\pi}
\sum_{\ell_1,\ell_2\in\mathbbm{Z}}
\widehat{\widetilde\psi}_{\ell_1}(t_n)
\widehat{\widetilde\chi}_{\ell_2}(t_n)
{\rm e}^{i(\ell_1^3+\ell_2^3)(t_n+s)}
i(\ell_1+\ell_2){\rm e}^{i(\ell_1+\ell_2)x}.
\end{align*}
Consequently,
\begin{align}
&\int_0^\tau {\rm e}^{(t_n+s)\partial_x^3}
\partial_x\Bigl(
{\rm e}^{-(t_n+s)\partial_x^3}\widetilde\psi(t_n)\,
{\rm e}^{-(t_n+s)\partial_x^3}\widetilde\chi(t_n)
\Bigr)\,ds\notag \\
&\quad=
\frac{i}{2\pi}
\sum_{\ell_1,\ell_2\in\mathbbm{Z}}
\int_0^\tau
{\rm e}^{-i\bigl((\ell_1+\ell_2)^3-\ell_1^3-\ell_2^3\bigr)(t_n+s)}\,ds\,
(\ell_1+\ell_2)
\widehat{\widetilde\psi}_{\ell_1}(t_n)
\widehat{\widetilde\chi}_{\ell_2}(t_n)
{\rm e}^{i(\ell_1+\ell_2)x}\label{ad0}.
\end{align}
Using the identity
\begin{equation}
\label{alg_ide}
(\ell_1+\ell_2)^3-\ell_1^3-\ell_2^3=3\ell_1\ell_2(\ell_1+\ell_2),
\end{equation}
and the fact that for $\ell_1+\ell_2=0$ the right-hand side of \eqref{ad0} is zero, we get
\begin{align*}
 &   \int_0^\tau {\rm e}^{(t_n+s)\partial_x^3}
\partial_x\Bigl(
{\rm e}^{-(t_n+s)\partial_x^3}\widetilde\psi(t_n)\,
{\rm e}^{-(t_n+s)\partial_x^3}\widetilde\chi(t_n)
\Bigr)\,ds\\
&\quad=\frac{i}{2\pi}
\sum_{\substack{\ell_1,\ell_2\in\mathbbm{Z}\\ \ell_1+\ell_2\ne 0}}
\int_0^\tau
{\rm e}^{-i3(\ell_1+\ell_2)\ell_1\ell_2(t_n+s)}\,ds\,
(\ell_1+\ell_2)
\widehat{\widetilde\psi}_{\ell_1}(t_n)
\widehat{\widetilde\chi}_{\ell_2}(t_n)
{\rm e}^{i(\ell_1+\ell_2)x}\\
&\quad=\frac{i}{2\pi}
\sum_{\substack{\ell_1,\ell_2\in\mathbbm{Z}\\ \ell_1+\ell_2\ne 0\\ \ell_1\ne 0, \ell_2\ne 0}}
\int_0^\tau
{\rm e}^{-i3(\ell_1+\ell_2)\ell_1\ell_2(t_n+s)}\,ds\,
(\ell_1+\ell_2)
\widehat{\widetilde\psi}_{\ell_1}(t_n)
\widehat{\widetilde\chi}_{\ell_2}(t_n)
{\rm e}^{i(\ell_1+\ell_2)x}\\
&\qquad\qquad\qquad\qquad  +\frac{\tau}{2\pi} \hat{\tilde\psi}_{0}(t_n)\sum_{\ell\ne 0} i \ell\hat{\tilde\chi}_{\ell}(t_n){\rm e}^{i\ell x}+\frac{\tau}{2\pi} \hat{\tilde\chi}_{0}(t_n)\sum_{\ell\ne 0} i \ell\hat{\tilde\psi}_{\ell}(t_n){\rm e}^{i\ell  x}.
\end{align*}
Note that, for $\ell_1,\ell_2\ne 0, \ell_1+\ell_2\ne 0$, we have
$$
\int_0^\tau
{\rm e}^{-i3(\ell_1+\ell_2)\ell_1\ell_2(t_n+s)}\,ds = \Big({\rm e}^{-i(t_n+\tau)\bigl((\ell_1+\ell_2)^3-\ell_1^3-\ell_2^3\Big)}
-
{\rm e}^{-it_n\bigl((\ell_1+\ell_2)^3-\ell_1^3-\ell_2^3\bigr)}
\Bigr)
\frac{1}{-3i(\ell_1+\ell_2)\ell_1\ell_2}.
$$
Thus, we achieve
\begin{align*}
   & \int_0^\tau {\rm e}^{(t_n+s)\partial_x^3}
\partial_x\Bigl(
{\rm e}^{-(t_n+s)\partial_x^3}\widetilde\psi(t_n)\,
{\rm e}^{-(t_n+s)\partial_x^3}\widetilde\chi(t_n)
\Bigr)\,ds \\
&=\frac{1}{2\pi}
\sum_{\substack{\ell_1,\ell_2\in\mathbbm{Z}\\ \ell_1,\ell_2\ne0}}
\Bigl(
{\rm e}^{-i(t_n+\tau)\bigl((\ell_1+\ell_2)^3-\ell_1^3-\ell_2^3\bigr)}
-
{\rm e}^{-it_n\bigl((\ell_1+\ell_2)^3-\ell_1^3-\ell_2^3\bigr)}
\Bigr)
\frac{1}{-3\ell_1\ell_2}
\widehat{\widetilde\psi}_{\ell_1}(t_n)
\widehat{\widetilde\chi}_{\ell_2}(t_n)
{\rm e}^{i(\ell_1+\ell_2)x}\\
&\qquad\qquad\qquad\qquad  +\frac{\tau}{2\pi} \hat{\tilde\psi}_{0}(t_n)\sum_{\ell\ne 0} i \ell\hat{\tilde\chi}_{\ell}(t_n){\rm e}^{i\ell x}+\frac{\tau}{2\pi} \hat{\tilde\chi}_{0}(t_n)\sum_{\ell\ne 0} i \ell\hat{\tilde\psi}_{\ell}(t_n){\rm e}^{i\ell  x}.
\end{align*}
Now, invoking Assumption \ref{ass_zero_mode}, one has $\hat{\tilde\psi}_{0}(t_n)=\hat{\tilde\chi}_{0}(t_n)=0$. Thus, we get 
\begin{align}
&\int_0^\tau {\rm e}^{(t_n+s)\partial_x^3}
\partial_x\Bigl(
{\rm e}^{-(t_n+s)\partial_x^3}\widetilde\psi(t_n)\,
{\rm e}^{-(t_n+s)\partial_x^3}\widetilde\chi(t_n)
\Bigr)\,ds\notag \\
&
=\frac{1}{2\pi}
\sum_{\substack{\ell_1,\ell_2\in\mathbbm{Z}\\ \ell_1,\ell_2\ne0}}
\Bigl(
{\rm e}^{-i(t_n+\tau)\bigl((\ell_1+\ell_2)^3-\ell_1^3-\ell_2^3\bigr)}
-
{\rm e}^{-it_n\bigl((\ell_1+\ell_2)^3-\ell_1^3-\ell_2^3\bigr)}
\Bigr)
\frac{1}{-3\ell_1\ell_2}
\widehat{\widetilde\psi}_{\ell_1}(t_n)
\widehat{\widetilde\chi}_{\ell_2}(t_n)
{\rm e}^{i(\ell_1+\ell_2)x}\label{fou0}.
\end{align}

If we define the operator $\partial_x^{-1}$ as \cite{kat0}
$$
\partial_x^{-1} {\rm e}^{i\ell x} = \begin{cases}
    \frac{1}{i\ell}{\rm e}^{i\ell x}, & \ell\ne 0,\\
    0, & \ell=0,
\end{cases}
$$
i.e.,
\begin{equation}
\label{inv_op}
\partial_x^{-1} f(x)
=
\frac{1}{\sqrt{2\pi}}
\sum_{\ell\ne0}\frac{1}{i\ell}\widehat f_\ell{\rm e}^{i\ell x},
\end{equation}
then \eqref{fou0} gives
\begin{align}
    &\int_0^\tau {\rm e}^{(t_n+s)\partial_x^3}
\partial_x\Bigl(
{\rm e}^{-(t_n+s)\partial_x^3}\widetilde\psi(t_n)\,
{\rm e}^{-(t_n+s)\partial_x^3}\widetilde\chi(t_n)
\Bigr)\,ds\notag \\
&=\frac{1}{3}\Big[{\rm e}^{(t_n+\tau)\partial_x^3}
\Bigl({\rm e}^{-(t_n+\tau)\partial_x^3}\partial_x^{-1}\widetilde\psi(t_n)\Bigr)
\Bigl({\rm e}^{-(t_n+\tau)\partial_x^3}\partial_x^{-1}\widetilde\chi(t_n)\Bigr)-{\rm e}^{t_n\partial_x^3}
\Bigl({\rm e}^{-t_n\partial_x^3}\partial_x^{-1}\widetilde\psi(t_n)\Bigr)
\Bigl({\rm e}^{-t_n\partial_x^3}\partial_x^{-1}\widetilde\chi(t_n)\Bigr) \Big].\label{fou1}
\end{align}

Then, in view of \eqref{fou1} and \eqref{discr}, we obtain following approximation of \(\widetilde\chi(t_{n+1})\):
\begin{align*}
\widetilde\chi(t_{n+1})
\approx{}&
\widetilde\chi(t_n)
+
\frac{2\mu}{3}{\rm e}^{(t_n+\tau)\partial_x^3}
\Bigl({\rm e}^{-(t_n+\tau)\partial_x^3}\partial_x^{-1}\widetilde\psi(t_n)\Bigr)
\Bigl({\rm e}^{-(t_n+\tau)\partial_x^3}\partial_x^{-1}\widetilde\chi(t_n)\Bigr) \\
&-
\frac{2\mu}{3}{\rm e}^{t_n\partial_x^3}
\Bigl({\rm e}^{-t_n\partial_x^3}\partial_x^{-1}\widetilde\psi(t_n)\Bigr)
\Bigl({\rm e}^{-t_n\partial_x^3}\partial_x^{-1}\widetilde\chi(t_n)\Bigr)
+
\int_{t_n}^{t_n+\tau}{\rm e}^{s\partial_x^3}Q^{1/2}\,dW(s).
\end{align*}
It is worth noting that, under Assumption~\ref{ass_zero_mode}, the right-hand side of the above approximation has vanishing zero Fourier mode. This allows us to construct a low-regularity integrator for \eqref{lin_eq}.

 Let \(\widetilde{\psi}_n\) be a numerical approximation of \(\widetilde{\psi}(t_n)\) obtained from a deterministic KdV integrator which preserves the spatial mean.
 More precisely, under Assumption~\ref{ass_zero_mode}, we assume that
\begin{align}  \label{zero_det}
(\widehat{\widetilde{\psi}}_n
)_0=0
\qquad n=0,1,\ldots,N .
\end{align}
Unless otherwise specified, condition~\eqref{zero_det} is assumed throughout the remainder of the paper.
Deterministic KdV integrators satisfying such a zero-mode preservation property are available in the literature; see, for instance, \cite{kat0} and the references therein. This zero-mode preservation is needed in order to apply the identity \eqref{fou1} in the subsequent error analysis.

Starting from \(\widetilde{\chi}_0=0\), and using the fact that the stochastic forcing has no zero Fourier mode by Assumption~\ref{ass_zero_mode}, we define the approximation of \(\widetilde{\chi}(t_n)\) recursively by
\begin{align}
\widetilde\chi_{n+1}
={}&
\widetilde\chi_n
+
\frac{2\mu}{3}{\rm e}^{(t_n+\tau)\partial_x^3}
\Bigl({\rm e}^{-(t_n+\tau)\partial_x^3}\partial_x^{-1}\widetilde\psi_n\Bigr)
\Bigl({\rm e}^{-(t_n+\tau)\partial_x^3}\partial_x^{-1}\widetilde\chi_n\Bigr) \notag\\
&-
\frac{2\mu}{3}{\rm e}^{t_n\partial_x^3}
\Bigl({\rm e}^{-t_n\partial_x^3}\partial_x^{-1}\widetilde\psi_n\Bigr)
\Bigl({\rm e}^{-t_n\partial_x^3}\partial_x^{-1}\widetilde\chi_n\Bigr)
+
\int_{t_n}^{t_n+\tau}{\rm e}^{s\partial_x^3}Q^{1/2}\,dW(s).
\label{met_twis}
\end{align}

It can be verified that the above recursion also preserves the zero Fourier mode, that is,
\[
(\widehat{\widetilde{\chi}}_n
)_0=0,
\qquad n=0,1,\ldots,N,
\]
almost surely. 
%Indeed, the first two nonlinear terms are well defined because \(\widetilde{\psi}_n\) and \(\widetilde{\chi}_n\) have vanishing zero mode, while the stochastic convolution has no zero Fourier mode due to \(Q^{1/2}{\bf 1}=0\). 
Hence the scheme remains in the zero-mean subspace at every time step.

Finally, transforming back to the original variable and using
\[
\psi_n=e^{-t_n\partial_x^3}\widetilde{\psi}_n,
\]
we obtain the time-stepping scheme
\begin{align}
\chi_{n+1}
={}&
{\rm e}^{-\tau\partial_x^3}\chi_n
+
\frac{2\mu}{3}
\Bigl({\rm e}^{-\tau\partial_x^3}\partial_x^{-1}\psi_n\Bigr)
\Bigl({\rm e}^{-\tau\partial_x^3}\partial_x^{-1}\chi_n\Bigr) \notag\\
&\hspace{2cm}-
\frac{2\mu}{3}{\rm e}^{-\tau\partial_x^3}
\Bigl(\partial_x^{-1}\psi_n\Bigr)
\Bigl(\partial_x^{-1}\chi_n\Bigr)
+
\int_{t_n}^{t_{n+1}}{\rm e}^{-(t_{n+1}-s)\partial_x^3}Q^{1/2}\,dW(s).
\label{met}
\end{align}

The numerical scheme~\eqref{met} is implemented as follows. We first compute a deterministic approximation \(\psi_n\), \(n=0,1,\ldots,N\), of the KdV equation on the same temporal grid. Given \(\chi_n\) and \(\psi_n\), the next stochastic fluctuation \(\chi_{n+1}\) is then computed by~\eqref{met}. Additional assumptions on the deterministic approximation \(\psi_n\) will be stated in the subsequent error analysis.

\section{Error analysis}
\label{err_section}

In this section we will present a strong error analysis for the low-regularity approximation \eqref{met} and then we use it to achieve strong error estimates for the approximation $u_n=\psi_n+\varepsilon \chi_n$ to the solution to the stochastic KdV equation \eqref{kdv_eq}.

% {\color{purple}{In this section we first prove strong convergence estimates for the low-regularity scheme \eqref{met}  {\color{green}to me, this scheme has not been given (since zero mode) until we give clear assumption on deterministic integrators, how do you think?} and then obtain the error estimates for the proposed temporal approximation of the original small-noise stochastic KdV equation {\color{green}it seems that we do not need additional assumption?}.  For the former, the proof follows a Lady Windermere's fan argument. We first estimate the one step error of the low-regularity approximation \eqref{met} and then establish stability results for the numerical flow. These estimates are then combined to propagate the local error globally.
% %The analysis will result into two strong error estimates. The first gives an error bound at $H^1$-regularity, while the second result shows how first-order convergence is recovered under $H^2$ regularity.
% }}

\subsection{One-step formulation, local error and stability  estimates}

Let $\varphi_t^\tau$ denote the exact one-step flow map associated with \eqref{mild_sol_twis_stoc}. More precisely, for any $t\ge 0$, $\tau>0$, and any $\mathcal F_t$-measurable initial datum $v$, $\varphi_t^\tau(v)$ is defined as the exact value at time $t+\tau$ of the solution to \eqref{mild_sol_twis_stoc} starting from $v$ at time $t$. In particular,
$
\tilde \chi(t+\tau)=\varphi_t^\tau(\tilde \chi(t)).
$

We also introduce the numerical one-step map $\Phi_t^\tau$ associated with the low-regularity integrator for the twisted variable $\tilde\chi$:
\begin{align}
\Phi_t^\tau(v,z)
={}&
v
+2\mu\int_0^\tau {\rm e}^{\partial_x^3(t+s)}\partial_x
\Big(
{\rm e}^{-\partial_x^3(t+s)}z\,
{\rm e}^{-\partial_x^3(t+s)}v
\Big)\,ds
+\int_t^{t+\tau}{\rm e}^{s\partial_x^3}Q^{1/2}\,dW(s)\notag\\
={}&
v
+\frac{2\mu}{3}
{\rm e}^{(t+\tau)\partial_x^3}
\Big(
{\rm e}^{-(t+\tau)\partial_x^3}\partial_x^{-1}z
\Big)
\Big(
{\rm e}^{-(t+\tau)\partial_x^3}\partial_x^{-1}v
\Big)
\label{num_map}\\
&\quad
-\frac{2\mu}{3}
{\rm e}^{t\partial_x^3}
\Big(
{\rm e}^{-t\partial_x^3}\partial_x^{-1}z
\Big)
\Big(
{\rm e}^{-t\partial_x^3}\partial_x^{-1}v
\Big)
+\int_t^{t+\tau}{\rm e}^{s\partial_x^3}Q^{1/2}\,dW(s).\notag
\end{align}
In particular, by \eqref{met_twis}, the numerical approximation can be written compactly as 
$
\tilde \chi_{n+1}=\Phi_{t_n}^\tau(\tilde \chi_n,\tilde{\psi}_n).
$

We begin with the local truncation error of the one-step approximation.
\begin{lemma}[Local error]
\label{Loc_err_lem}
Assume that $Q^{1/2}\in\mathcal{L}_2^1$, and $\xi\in H^2$. Then, for every $n=0,\dots,N-1$, with $t_n=n\tau$, one has
$$
\big\|\varphi_{t_n}^\tau(\tilde \chi(t_n))-\Phi_{t_n}^\tau(\tilde \chi(t_n),\tilde\psi(t_n))\big\|_{L^2(\Omega;L^2)}
\lesssim \tau^{3/2}.
$$
\end{lemma}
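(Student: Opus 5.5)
Subtracting the two one-step maps, the initial value $\tilde\chi(t_n)$ and the stochastic convolution over $[t_n,t_{n+1}]$ cancel exactly. By the first line of \eqref{num_map}, the local error is therefore the purely deterministic-in-form integral
\[
\mathcal E_n:=2\mu\int_0^\tau {\rm e}^{(t_n+s)\partial_x^3}\partial_x\Bigl(
{\rm e}^{-(t_n+s)\partial_x^3}\tilde\psi(t_n+s)\,{\rm e}^{-(t_n+s)\partial_x^3}\tilde\chi(t_n+s)
-{\rm e}^{-(t_n+s)\partial_x^3}\tilde\psi(t_n)\,{\rm e}^{-(t_n+s)\partial_x^3}\tilde\chi(t_n)\Bigr)ds .
\]
I would use \eqref{num_map} in its integral form, not the $\partial_x^{-1}$ form, since the two coincide under the zero-mode property (Remark \ref{rmk_zero}). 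Writing $\tilde\psi(t_n+s)\tilde\chi(t_n+s)-\tilde\psi(t_n)\tilde\chi(t_n)$ (inside the twisted products) as $(\tilde\psi(t_n+s)-\tilde\psi(t_n))\cdot\tilde\chi(t_n+s)+\tilde\psi(t_n)\cdot(\tilde\chi(t_n+s)-\tilde\chi(t_n))$ splits $\mathcal E_n$ into $\mathcal E_n^{(1)}+\mathcal E_n^{(2)}$.

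In each piece, the Airy group is an isometry on $L^2$, so the outer ${\rm e}^{(t_n+s)\partial_x^3}$ can be dropped. Then $\|\partial_x(fg)\|\le \|f\|_{H^1}\|g\|_{W^{1,\infty}}$-type bounds, or simply the algebra property \eqref{bil_est} with $s=1$, give
\[
\|\mathcal E_n^{(1)}\|\lesssim\int_0^\tau \|\tilde\psi(t_n+s)-\tilde\psi(t_n)\|_{H^1}\|\chi(t_n+s)\|_{H^1}\,ds,\qquad
\|\mathcal E_n^{(2)}\|\lesssim\int_0^\tau \|\psi(t_n)\|_{H^1}\|\tilde\chi(t_n+s)-\tilde\chi(t_n)\|_{H^1}\,ds .
\]
Taking $L^2(\Omega)$ norms with Minkowski's inequality, it suffices to show $\|\tilde\chi(t_n+s)-\tilde\chi(t_n)\|_{L^2(\Omega;H^1)}\lesssim\sqrt s$ and $\|\tilde\psi(t_n+s)-\tilde\psi(t_n)\|_{H^1}\lesssim s$. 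Combined with $\sup_t\|\psi\|_{H^1}<\infty$ (Remark \ref{wp_stoc}) and the moment bound \eqref{es_t}, this yields $\int_0^\tau (s+\sqrt s)\,ds\lesssim\tau^{3/2}$.

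The main obstacle is the twisted-variable time regularity. Lemma \ref{time_reg_est} concerns $\chi$, not $\tilde\chi$, and ${\rm e}^{(t+s)\partial_x^3}-{\rm e}^{t\partial_x^3}$ costs three derivatives, so it cannot be transferred directly. Instead I would work with the twisted mild equation \eqref{mild_sol_twis_stoc} on $[t_n,t_n+s]$, which gives
\[
\tilde\chi(t_n+s)-\tilde\chi(t_n)=2\mu\int_0^s{\rm e}^{(t_n+r)\partial_x^3}\partial_x(\psi\chi)(t_n+r)\,dr+\int_{t_n}^{t_n+s}{\rm e}^{r\partial_x^3}Q^{1/2}dW(r).
\]
By the It\^o isometry, the stochastic term has $L^2(\Omega;H^1)$ norm $\sqrt s\,\|Q^{1/2}\|_{\mathcal L_2^1}$. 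The drift term, however, needs $\partial_x(\psi\chi)\in H^1$, which requires $\chi\in H^2$; this is not available.

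To avoid this derivative loss, I would instead measure the increment in a weaker norm in the bilinear estimate. For $\mathcal E_n^{(2)}$, the bound $\|\partial_x(fg)\|\le \|f\|_{W^{1,\infty}}\|g\|_{H^1}$ with $f={\rm e}^{-(t_n+s)\partial_x^3}\tilde\psi(t_n)$ uses $\psi\in H^2\subset W^{1,\infty}$ by \eqref{gn1}, so only $\|\tilde\chi(t_n+s)-\tilde\chi(t_n)\|_{L^2(\Omega;H^1)}$ is needed. If that still fails, I would fall back on the $L^2$ increment: the drift contributes $s\|\psi\chi\|_{H^1}$ in $L^2$ and the noise contributes $\sqrt s$, which gives $\|\tilde\chi(t_n+s)-\tilde\chi(t_n)\|_{L^2(\Omega;L^2)}\lesssim\sqrt s$.

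To make the product estimate close with only $L^2$ increments, I would integrate by parts in Fourier space. The identity \eqref{alg_ide} gives the phase derivative $3\ell_1\ell_2(\ell_1+\ell_2)$, and dividing by it absorbs the outer $\partial_x$, exactly as in \eqref{fou1}. I would attempt this first in the form $\|\partial_x(fg)\|\lesssim\|f\|_{H^2}\|g\|_{H^1}$ with the $H^1$ increment of $\tilde\chi$ controlled through the mild formula, in which the drift costs $s\|\psi\chi\|_{H^2}$. If that is not finite, I would apply the $\partial_x^{-1}$ trick to move the derivative. For $\tilde\psi$, the deterministic increment is $O(s)$ in $H^1$ when $\psi\in H^2$, since $\|\partial_x(\psi^2)\|_{H^1}\lesssim\|\psi\|_{H^2}^2$.
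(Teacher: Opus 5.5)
Your reduction is correct and is the paper's own. The stochastic convolution cancels, your $\mathcal E_n^{(1)},\mathcal E_n^{(2)}$ are the paper's $L_1,L_2$, and the $H^1$ algebra property reduces the lemma to time regularity of the twisted variables. Three of your side points also hold:
your bound $\|\tilde\psi(t_n+s)-\tilde\psi(t_n)\|_{H^1}\lesssim s$ via $\|\partial_x(\psi^2)\|_{H^1}\lesssim\|\psi\|_{H^2}^2$ is fine, and sharper than the $\sqrt s$ the paper quotes;
Minkowski in $L^2(\Omega)$ is a legitimate substitute for the paper's Cauchy--Schwarz in time;
and Lemma~\ref{time_reg_est}, as stated for $\chi$, indeed cannot be moved to $\tilde\chi$ through the isometry.

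The gap is that the estimate $\|\tilde\chi(t_n+s)-\tilde\chi(t_n)\|_{L^2(\Omega;H^1)}\lesssim\sqrt s$, which carries the whole difficulty, is never established, and none of your fallbacks closes it.
The $W^{1,\infty}$ variant needs exactly the same $H^1$ increment.
An $L^2$ increment cannot control $\partial_x(fg)$.
The option you would try first needs $\psi\chi\in H^2$, i.e.\ $\chi\in H^2$, which you already noted is unavailable when $Q^{1/2}\in\mathcal L_2^1$.
The ``$\partial_x^{-1}$ trick'' is left unspecified.
If that trick is meant to be applied to the local-error integral itself, so that only $L^2$ increments occur, it fails term by term. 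Take $\delta(s)=\tilde\chi(t_n+s)-\tilde\chi(t_n)$ and integrate the phase ${\rm e}^{-3i(t_n+s)\ell_1\ell_2(\ell_1+\ell_2)}$ by parts against $\hat\delta_{\ell_2}(s)$. This puts the time derivative on $\delta$, whose differential contains $dW$. The boundary term $\frac{1}{3\ell_1\ell_2}\hat{\tilde\psi}_{\ell_1}(t_n)\hat\delta_{\ell_2}(\tau)$ is then only $O(\tau^{1/2})$ in $L^2(\Omega;L^2)$, since the noise part of $\delta(\tau)$ has that size even in $H^{-1}$. So $\tau^{3/2}$ could only emerge from a cancellation with the stochastic-integral term.

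The missing step is how the paper proves the increment bound. Its proof of Lemma~\ref{time_reg_est} in Appendix~\ref{app_time_reg} is in fact carried out for $\tilde\chi$, which is what the local error uses. The idea is to integrate by parts in time, in Fourier space, on the drift integral of the increment,
$\int_0^s{\rm e}^{(t+r)\partial_x^3}\partial_x\bigl({\rm e}^{-(t+r)\partial_x^3}\tilde\psi(t+r)\,{\rm e}^{-(t+r)\partial_x^3}\tilde\chi(t+r)\bigr)\,dr$.
This is measured in $\dot H^1$, with the full product $g(r)=\hat{\tilde\psi}_{\ell_1}(t+r)\hat{\tilde\chi}_{\ell_2}(t+r)$ as integrand.

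Dividing by the phase from \eqref{alg_ide} turns the $\dot H^1$ multiplier into $|\ell_1+\ell_2|/(3|\ell_1\ell_2|)\le 2/3$, so no derivative is lost. The boundary term then splits into two parts.
The phase-difference part is bounded via $|{\rm e}^{ix}-1|\lesssim|x|^{1/2}$ by $\sqrt s\,\|\psi\|_{H^1}\|\chi\|_{H^1}$.
The product increment $g(s)-g(0)$, together with the remaining integral against $dg$, is expanded using the equations for $\hat{\tilde\psi}$ and $\hat{\tilde\chi}$.
In that expansion, the drift contributions are $O(s)$, because the derivative they carry is cancelled by $1/(\ell_1\ell_2)$. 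The martingale contribution is $O(\sqrt s)$ in $L^2(\Omega)$ by It\^o's isometry and $Q^{1/2}\in\mathcal L_2^1$.

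Adding the noise increment $\sqrt s\,\|Q^{1/2}\|_{\mathcal L_2^1}$ gives $\mathbbm{E}\|\tilde\chi(t+s)-\tilde\chi(t)\|_{\dot H^1}^2\lesssim s$. Your Minkowski step then yields $\tau^{3/2}$.
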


\begin{proof} 
First, note that,by assumptions, $\|\tilde \psi\|_{\mathbbm{L}_T^{\infty,2}}<\infty,\;
\|\tilde{\chi}\|_{\mathcal{L}_T^{\infty,p,1}}<\infty, \;
p\in \mathbbm{N}.$ We now estimate the mean-square local error. By \eqref{mild_sol_twis_stoc} and \eqref{num_map}, we have
\begin{align}
&\mathbbm{E}\big\|
\varphi_{t_n}^\tau(\tilde \chi(t_n))
-\Phi_{t_n}^\tau(\tilde \chi(t_n),\tilde\psi(t_n))
\big\|^2 \notag\\
={}&
4\mu^2\,
\mathbbm{E}\Bigg\|
\int_0^\tau
{\rm e}^{(t_n+s)\partial_x^3}\partial_x
\Big(
{\rm e}^{-(t_n+s)\partial_x^3}\tilde\psi(t_n+s)\,
{\rm e}^{-(t_n+s)\partial_x^3}\tilde \chi(t_n+s)
-{\rm e}^{-(t_n+s)\partial_x^3}\tilde\psi(t_n)\,
{\rm e}^{-(t_n+s)\partial_x^3}\tilde \chi(t_n)
\Big)\,ds
\Bigg\|^2.
\label{tin}
\end{align}
We split the difference inside the integral into two parts 
$$
\mathbbm{E}\big\|
\varphi_{t_n}^\tau(\tilde \chi(t_n))
-\Phi_{t_n}^\tau(\tilde \chi(t_n),\tilde\psi(t_n))
\big\|^2
\lesssim L_1+L_2,
$$
where
\begin{align*}
L_1
&=
\mathbbm{E}\Bigg\|
\int_0^\tau
{\rm e}^{(t_n+s)\partial_x^3}\partial_x
\Big[
{\rm e}^{-(t_n+s)\partial_x^3}
\big(\tilde\psi(t_n+s)-\tilde\psi(t_n)\big)\,
{\rm e}^{-(t_n+s)\partial_x^3}\tilde \chi(t_n+s)
\Big]\,ds
\Bigg\|^2,\\
L_2
&=
\mathbbm{E}\Bigg\|
\int_0^\tau
{\rm e}^{(t_n+s)\partial_x^3}\partial_x
\Big[
{\rm e}^{-(t_n+s)\partial_x^3}\tilde\psi(t_n)\,
{\rm e}^{-(t_n+s)\partial_x^3}
\big(\tilde \chi(t_n+s)-\tilde \chi(t_n)\big)
\Big]\,ds
\Bigg\|^2.
\end{align*}

We first estimate $L_1$. Since ${\rm e}^{t\partial_x^3}$ is an isometry on Sobolev spaces, the Hölder inequality gives
\begin{align*}
L_1
&\le
\mathbbm{E}\Bigg(
\int_0^\tau
\Big\|
{\rm e}^{-(t_n+s)\partial_x^3}
\big(\tilde\psi(t_n+s)-\tilde\psi(t_n)\big)\,
{\rm e}^{-(t_n+s)\partial_x^3}\tilde \chi(t_n+s)
\Big\|_{\dot{H}^1}\,ds
\Bigg)^2\\
&\le
\tau
\int_0^\tau
\mathbbm{E}
\Big\|
{\rm e}^{-(t_n+s)\partial_x^3}
\big(\tilde\psi(t_n+s)-\tilde\psi(t_n)\big)\,
{\rm e}^{-(t_n+s)\partial_x^3}\tilde \chi(t_n+s)
\Big\|_{\dot{H}^1}^2\,ds.
\end{align*}
Next, combining the bilinear estimate \eqref{hom_b1} and \eqref{reg_n0}, we infer that
\begin{align}
L_1
&\lesssim
\tau
\int_0^\tau
\|\tilde\psi(t_n+s)-\tilde\psi(t_n)\|_{\dot{H}^1}^2
\,\mathbbm{E}\|\tilde \chi(t_n+s)\|_{\dot{H}^1}^2\,ds
\notag\\
&\lesssim
\tau
\int_0^\tau
\|\tilde\psi(t_n+s)-\tilde\psi(t_n)\|_{\dot{H}^1}^2\,ds.
\label{I1_0}
\end{align}

With arguments analogous to those in the proof of Lemma 2.5 in \cite{kat0}, we infer that
\begin{equation}
\label{t0}
\|\tilde\psi(t_n+s)-\tilde\psi(t_n)\|_{\dot{H}^1}\lesssim \sqrt{s}.
\end{equation}
Substituting \eqref{t0} into \eqref{I1_0}, we obtain
\begin{equation}
\label{t8}
L_1\lesssim \tau^3.
\end{equation}

We now turn to $L_2$. Arguing as above, and using Lemma \ref{time_reg_est} together with \eqref{reg_n0}, we obtain
\begin{equation}
\label{t7}
\begin{aligned}
L_2
&\lesssim
\tau\int_0^\tau
\mathbbm{E}
\Big\|
{\rm e}^{-(t_n+s)\partial_x^3}\tilde\psi(t_n)\,
{\rm e}^{-(t_n+s)\partial_x^3}
\big(\tilde \chi(t_n+s)-\tilde \chi(t_n)\big)
\Big\|_{\dot{H}^1}^2\,ds\\
&\lesssim
\tau\int_0^\tau
\mathbbm{E}\|\tilde \chi(t_n+s)-\tilde \chi(t_n)\|_{\dot{H}^1}^2\,ds\\
&\lesssim \tau^3.
\end{aligned}
\end{equation}
Combining \eqref{t8} and \eqref{t7} with \eqref{tin}, and then taking square roots, yields the desired estimate.
\end{proof}

The next lemma gives stability of the numerical map $\Phi_t^\tau$ with respect to its first argument.

\begin{lemma}
\label{stab_lem}
For any $t\ge 0$, $\tau>0$, any $\mathcal F_t$-measurable random variables $v,w\in L^2(\Omega;L^2)$, and any nonrandom $z$ satisfying $z^{(1)}\in L^\infty$, one has
$$
\big\|\Phi_t^\tau(v,z)-\Phi_t^\tau(w,z)\big\|_{L^2(\Omega; L^2)}
\le
(1+c\tau)^{1/2}\big\|v-w\big\|_{L^2(\Omega; L^2)},
$$
where the constant $c>0$ depends on $\|z^{(1)}\|_{L^\infty}$.
\end{lemma}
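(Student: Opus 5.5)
The idea is to use that the map $\Phi_t^\tau$ is affine in its first argument and that the stochastic convolution does not depend on $v$. Set $D:=v-w$. By \eqref{num_map}, the noise cancels and
\[
\Phi_t^\tau(v,z)-\Phi_t^\tau(w,z)=D+B,\qquad
B:=2\mu\int_0^\tau {\rm e}^{(t+s)\partial_x^3}\partial_x\Big({\rm e}^{-(t+s)\partial_x^3}z\,{\rm e}^{-(t+s)\partial_x^3}D\Big)\,ds ,
\]
where $B$ is linear in $D$. Expanding the square gives
\[
\|D+B\|^2=\|D\|^2+2\langle D,B\rangle+\|B\|^2 .
\]
It then suffices to show, pathwise, that $\langle D,B\rangle\le c\tau\|D\|^2$ and $\|B\|^2\le c\tau\|D\|^2$. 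Taking expectations and then square roots gives the claim. No martingale or It\^o argument is needed, since $z$ is nonrandom and $D$ enters only through deterministic operators.

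\textbf{Cross term.} I would use the integral form of $B$. Put $D_s={\rm e}^{-(t+s)\partial_x^3}D$ and $Z_s={\rm e}^{-(t+s)\partial_x^3}z$. Since the Airy group is unitary and $D$ is real valued, integration by parts gives
\[
\langle D,{\rm e}^{(t+s)\partial_x^3}\partial_x(Z_sD_s)\rangle=\langle D_s,\partial_x(Z_sD_s)\rangle=\tfrac12\langle D_s^2,\partial_xZ_s\rangle .
\]
Hence $|\langle D,B\rangle|\le \mu\tau\sup_s\|\partial_x Z_s\|_{L^\infty}\|D\|^2$. Moreover
\[
\|\partial_xZ_s\|_{L^\infty}\lesssim\sum_\ell|\ell||\widehat z_\ell| ,
\]
uniformly in $s$, because the Airy group only rotates the phases of the Fourier coefficients. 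This is exactly the quantity encoded by $z^{(1)}$, so the dependence of $c$ on $\|z^{(1)}\|_{L^\infty}$ comes from here.

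\textbf{Quadratic term.} This is the main obstacle: the term $Z_s\,\partial_xD_s$ loses a derivative on $D$, so a naive bound $\|B\|\le\tau\|\partial_x(\cdots)\|$ fails. I would instead use the exact Fourier representation \eqref{fou0}, with $z$ and $D$ in place of $\widetilde\psi(t_n)$ and $\widetilde\chi(t_n)$. This is valid because $z$ and $D$ have zero mean, so only $\ell_1,\ell_2\ne0$ contribute. With $k=\ell_1+\ell_2$, the multiplier
\[
m=\frac{{\rm e}^{-i(t+\tau)3k\ell_1\ell_2}-{\rm e}^{-it\,3k\ell_1\ell_2}}{-3\ell_1\ell_2}
\]
satisfies both $|m|\le \tfrac{2}{3|\ell_1\ell_2|}$ and $|m|\le\tau|k|$. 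Taking the geometric mean gives
\[
|m|\lesssim\tau^{1/2}\,\frac{|k|^{1/2}}{|\ell_1\ell_2|^{1/2}}\le\tau^{1/2}\big(|\ell_1|^{-1/2}+|\ell_2|^{-1/2}\big)\le 2\tau^{1/2},
\]
using $|k|\le|\ell_1|+|\ell_2|$. Therefore
\[
|\widehat B_k|\lesssim\tau^{1/2}\sum_{\ell_1}|\widehat z_{\ell_1}||\widehat D_{k-\ell_1}| ,
\]
and Young's convolution inequality yields $\|B\|\lesssim\tau^{1/2}\|\widehat z\|_{\ell^1}\|D\|$. For zero-mean $z$ one has $\|\widehat z\|_{\ell^1}\le\sum_\ell|\ell||\widehat z_\ell|$, which is again controlled by $z^{(1)}$. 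This gives $\|B\|^2\le c\tau\|D\|^2$, which completes the plan.
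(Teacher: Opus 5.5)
Your proof is correct. Its skeleton is the paper's: there too one writes $\|\Phi_t^\tau(v,z)-\Phi_t^\tau(w,z)\|^2=\|\delta\|^2+\frac{4\mu^2}{9}(J_1+J_2)$, with $\frac{4\mu^2}{9}J_1=2\langle D,B\rangle$ and $\frac{4\mu^2}{9}J_2=\|B\|^2$ in your notation. Both terms are bounded pathwise by $c\tau\|v-w\|^2$, and then expectations are taken.

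The difference is in how the two pieces are estimated.

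\emph{Cross term.} The paper works with the closed $\partial_x^{-1}$-form in \eqref{num_map}. It splits by the product rule into $J_1^a$ and $J_1^b$, the latter via $f\,\partial_x^{-1}f=\frac12\partial_x(\partial_x^{-1}f)^2$. Each piece is then bounded in Fourier space using $|{\rm e}^{-3i\tau\ell_1\ell_2(\ell_1+\ell_2)}-1|\le 3\tau|\ell_1\ell_2(\ell_1+\ell_2)|$ and convolution estimates with $z^{(1)}$. You instead stay with the Duhamel form and move the unitary Airy factor onto $D$. Each time slice then reduces to the classical transport identity $\langle D_s,\partial_x(Z_sD_s)\rangle=\frac12\langle D_s^2,\partial_x Z_s\rangle$. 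This is shorter, and it shows the estimate is just the $L^2$ energy bound of the continuous linear problem, inherited exactly because the twisted variable is frozen.

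\emph{Quadratic term.} The paper splits $J_2$ into two pairings, $J_2^a$ and $J_2^b$. It spends the full first-order phase bound against $H^1$ norms of the products $F$ and $G$ via \eqref{bil_est}. Your interpolated multiplier bound $|m|\lesssim\tau^{1/2}$ plus Young's inequality gives the operator bound $\|B\|\lesssim\tau^{1/2}\|\widehat z\|_{\ell^1}\|D\|$ directly. Because that bound is symmetric in the two factors, putting the $\ell^1$ norm on $\widehat v$ instead also yields Lemma \ref{det_conv_lem} in one line.

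\emph{What the paper's route buys.} It is uniform with the later analysis. The same resonance bookkeeping, keeping a full power of $\tau$ and shifting derivatives onto smoother factors, is what the terms $A$ and $B$ in the estimate of $I_6^1$ (Theorem \ref{conv_thm2}) require. There a $\tau^{1/2}$ multiplier bound would give only $\tau^{3/2}$ instead of the needed $\tau^2$.

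\emph{One detail to add.} For $D\in L^2$ only, the pairing $\langle D_s,\partial_x(Z_sD_s)\rangle$ is not defined slice by slice. So prove $\langle D,B\rangle=\mu\int_0^\tau\langle D_s^2,\partial_x Z_s\rangle\,ds$ for trigonometric polynomials and pass to the limit. This uses the $L^2$-boundedness of $D\mapsto B$ from your quadratic step, which should therefore come first.
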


\begin{proof} Set
$
\delta:=v-w.
$ 
From the definition of $\Phi_t^\tau$, we have
\begin{align*}
\Phi_t^\tau(v,z)-\Phi_t^\tau(w,z)
={}&
\delta
+\frac{2\mu}{3}
{\rm e}^{(t+\tau)\partial_x^3}
\Big(
{\rm e}^{-(t+\tau)\partial_x^3}\partial_x^{-1}z
\Big)
\Big(
{\rm e}^{-(t+\tau)\partial_x^3}\partial_x^{-1}\delta
\Big)
-\frac{2\mu}{3}
{\rm e}^{t\partial_x^3}
\Big(
{\rm e}^{-t\partial_x^3}\partial_x^{-1}z
\Big)
\Big(
{\rm e}^{-t\partial_x^3}\partial_x^{-1}\delta
\Big).
\end{align*}
Therefore,
$$
\|\Phi_t^\tau(v,z)-\Phi_t^\tau(w,z)\|^2
=
\|\delta\|^2+\frac{4\mu^2}{9}(J_1+J_2),
$$
where
\begin{align*}
J_1
&=
\frac{3}{\mu}
\Big\langle
{\rm e}^{(t+\tau)\partial_x^3}
\Big[
\big({\rm e}^{-(t+\tau)\partial_x^3}\partial_x^{-1}z\big)
\big({\rm e}^{-(t+\tau)\partial_x^3}\partial_x^{-1}\delta\big)
\Big]
-
{\rm e}^{t\partial_x^3}
\Big[
\big({\rm e}^{-t\partial_x^3}\partial_x^{-1}z\big)
\big({\rm e}^{-t\partial_x^3}\partial_x^{-1}\delta\big)
\Big],
\delta
\Big\rangle,\\
J_2
&=
\Big\|
{\rm e}^{(t+\tau)\partial_x^3}
\Big[
\big({\rm e}^{-(t+\tau)\partial_x^3}\partial_x^{-1}z\big)
\big({\rm e}^{-(t+\tau)\partial_x^3}\partial_x^{-1}\delta\big)
\Big]
-
{\rm e}^{t\partial_x^3}
\Big[
\big({\rm e}^{-t\partial_x^3}\partial_x^{-1}z\big)
\big({\rm e}^{-t\partial_x^3}\partial_x^{-1}\delta\big)
\Big]
\Big\|^2.
\end{align*}

We begin with the term $J_1$. Introduce the twisted variables
$$
\tilde z:={\rm e}^{-t\partial_x^3}z,\qquad
\tilde v:={\rm e}^{-t\partial_x^3}v,\qquad
\tilde w:={\rm e}^{-t\partial_x^3}w.
$$
Then
$$
\delta
=
{\rm e}^{t\partial_x^3}(\tilde v-\tilde w),
\qquad
{\rm e}^{-(t+\tau)\partial_x^3}\delta
=
{\rm e}^{-\tau\partial_x^3}(\tilde v-\tilde w),
\qquad
{\rm e}^{-(t+\tau)\partial_x^3}z
=
{\rm e}^{-\tau\partial_x^3}\tilde z.
$$
Substituting these identities into the definition of $J_1$, and using
$$
\langle {\rm e}^{t\partial_x^3}f,g\rangle
=
\langle f,{\rm e}^{-t\partial_x^3}g\rangle,
\qquad
\langle f,g\rangle
=
\langle \partial_x f,\partial_x^{-1}g\rangle,
$$
we obtain
\begin{align*}
\frac{\mu}{3}J_1
&=
\Big\langle
{\rm e}^{\tau\partial_x^3}
\Big[
\big({\rm e}^{-\tau\partial_x^3}\partial_x^{-1}\tilde z\big)
\big({\rm e}^{-\tau\partial_x^3}\partial_x^{-1}(\tilde v-\tilde w)\big)
\Big]
-
(\partial_x^{-1}\tilde z)\big(\partial_x^{-1}(\tilde v-\tilde w)\big),
\tilde v-\tilde w
\Big\rangle\\
&=
\Big\langle
\partial_x
\Big[
\big({\rm e}^{-\tau\partial_x^3}\partial_x^{-1}\tilde z\big)
\big({\rm e}^{-\tau\partial_x^3}\partial_x^{-1}(\tilde v-\tilde w)\big)
\Big],
\partial_x^{-1}{\rm e}^{-\tau\partial_x^3}(\tilde v-\tilde w)
\Big\rangle
-
\Big\langle
\partial_x
\Big[
(\partial_x^{-1}\tilde z)\big(\partial_x^{-1}(\tilde v-\tilde w)\big)
\Big],
\partial_x^{-1}(\tilde v-\tilde w)
\Big\rangle.
\end{align*}
We now decompose $J_1$ as
\begin{align*}
\frac{\mu}{3}J_1
&=
\Big\langle
\big({\rm e}^{-\tau\partial_x^3}\tilde z\big)
\big({\rm e}^{-\tau\partial_x^3}\partial_x^{-1}(\tilde v-\tilde w)\big),
\partial_x^{-1}{\rm e}^{-\tau\partial_x^3}(\tilde v-\tilde w)
\Big\rangle\\
&\quad
+
\Big\langle
\big({\rm e}^{-\tau\partial_x^3}\partial_x^{-1}\tilde z\big)
\big({\rm e}^{-\tau\partial_x^3}(\tilde v-\tilde w)\big),
\partial_x^{-1}{\rm e}^{-\tau\partial_x^3}(\tilde v-\tilde w)
\Big\rangle\\
&\quad
-
\Big\langle
\tilde z\,\partial_x^{-1}(\tilde v-\tilde w),
\partial_x^{-1}(\tilde v-\tilde w)
\Big\rangle
-
\Big\langle
\partial_x^{-1}\tilde z\,(\tilde v-\tilde w),
\partial_x^{-1}(\tilde v-\tilde w)
\Big\rangle.
\end{align*}
Accordingly, we split
$$
\frac{\mu}{3}J_1=J_1^a+J_1^b,
$$
where
\begin{align*}
J_1^a
&=
\Big\langle
{\rm e}^{\tau\partial_x^3}
\Big[
\big({\rm e}^{-\tau\partial_x^3}\tilde z\big)
\big({\rm e}^{-\tau\partial_x^3}\partial_x^{-1}(\tilde v-\tilde w)\big)
\Big]
-
\tilde z\,\partial_x^{-1}(\tilde v-\tilde w),
\partial_x^{-1}(\tilde v-\tilde w)
\Big\rangle,\\
J_1^b
&=
\Big\langle
{\rm e}^{\tau\partial_x^3}
\Big[
\big({\rm e}^{-\tau\partial_x^3}\partial_x^{-1}\tilde z\big)
\big({\rm e}^{-\tau\partial_x^3}(\tilde v-\tilde w)\big)
\Big]
-
\partial_x^{-1}\tilde z\,(\tilde v-\tilde w),
\partial_x^{-1}(\tilde v-\tilde w)
\Big\rangle.
\end{align*}

We start by estimating $J_1^a$. Let
$
\tilde y:=\partial_x^{-1}(\tilde v-\tilde w). 
$
Then, by definition,
$
J_1^a=
\Big\langle
{\rm e}^{\tau\partial_x^3}
\Big[
\big({\rm e}^{-\tau\partial_x^3}\tilde z\big)
\big({\rm e}^{-\tau\partial_x^3}\tilde y\big)
\Big]
-
\tilde z\,\tilde y,
\partial_x \tilde y
\Big\rangle.
$
Using the identity
$
\langle f,g\rangle=\langle \partial_x^{-1}f,\partial_x g\rangle,
$
we may equivalently write
\begin{align*}
J_1^a
&=
\Big\langle
\partial_x^{-1}
\Big(
{\rm e}^{\tau\partial_x^3}
\Big[
\big({\rm e}^{-\tau\partial_x^3}\tilde z\big)
\big({\rm e}^{-\tau\partial_x^3}\tilde y\big)
\Big]
-
\tilde z\,\tilde y
\Big),
\tilde v-\tilde w
\Big\rangle.
\end{align*}
Since
$
\widehat{{\rm e}^{t\partial_x^3}f}_\ell
=
{\rm e}^{-it\ell^3}\hat f_\ell, 
$
the $\ell$-th Fourier coefficient of
$
{\rm e}^{\tau\partial_x^3}
\Big[
\big({\rm e}^{-\tau\partial_x^3}\tilde z\big)
\big({\rm e}^{-\tau\partial_x^3}\tilde y\big)
\Big]
$
is given by
$$
\displaystyle\frac{1}{\sqrt{2\pi}} {\rm e}^{-i\tau\ell^3} \sum_{\substack{\ell_1,\ell_2\ne 0\\ \ell_1+\ell_2=\ell}}
{\rm e}^{i\tau(\ell_1^3+\ell_2^3)}
\hat{\tilde z}_{\ell_1}\hat{\tilde y}_{\ell_2}.
$$
Hence, the $\ell$-th Fourier coefficient of
$
{\rm e}^{\tau\partial_x^3}
\Big[
\big({\rm e}^{-\tau\partial_x^3}\tilde z\big)
\big({\rm e}^{-\tau\partial_x^3}\tilde y\big)
\Big]
-
\tilde z\,\tilde y
$
is
$$
\displaystyle\frac{1}{\sqrt{2\pi}} \sum_{\substack{\ell_1,\ell_2\ne 0\\ \ell_1+\ell_2=\ell}}
\Big(
{\rm e}^{-i\tau(\ell^3-\ell_1^3-\ell_2^3)}-1
\Big)
\hat{\tilde z}_{\ell_1}\hat{\tilde y}_{\ell_2}.
$$
Using \eqref{alg_ide} and \eqref{inv_op}, we then obtain
$$
\partial_x^{-1} \Bigg({\rm e}^{\tau\partial_x^3}
\Big[
\big({\rm e}^{-\tau\partial_x^3}\tilde z\big)
\big({\rm e}^{-\tau\partial_x^3}\tilde y\big)
\Big]
-
\tilde z\,\tilde y\Bigg)=\displaystyle\frac{1}{\sqrt{2\pi}}\sum_{\ell\ne 0} {\rm e}^{i\ell x} i\ell
\displaystyle\frac{1}{\sqrt{2\pi}} \sum_{\substack{\ell_1,\ell_2\ne 0\\ \ell_1+\ell_2=\ell}}
\Big(
{\rm e}^{-i\tau(\ell^3-\ell_1^3-\ell_2^3)}-1
\Big)
\hat{\tilde z}_{\ell_1}\hat{\tilde y}_{\ell_2}.
$$
Therefore, we get
\begin{align*}
J_1^a
&=\displaystyle\frac{1}{\sqrt{2\pi}} 
\sum_{\ell\ne 0}
(i\ell)^{-1}
(\hat{\tilde v}-\hat{\tilde w})_{-\ell}
\sum_{\substack{\ell_1,\ell_2\ne 0\\ \ell_1+\ell_2=\ell}}
\big(
{\rm e}^{-3i\tau\ell_1\ell_2(\ell_1+\ell_2)}-1
\big)
\hat{\tilde z}_{\ell_1}\hat{\tilde y}_{\ell_2}.
\end{align*}
Then, a standard interpolation argument yields the following bound
\begin{align*}
|J_1^a|
&\lesssim
\tau
\sum_{\substack{\ell_1,\ell_2\ne 0\\ \ell_1+\ell_2=\ell}}
|(\ell_1+\ell_2)^{-1}|
\,|(\hat{\tilde v}-\hat{\tilde w})_{-(\ell_1+\ell_2)}|
\,|\ell_1\ell_2(\ell_1+\ell_2)|
\,|\hat{\tilde z}_{\ell_1}|
\,|\hat{\tilde y}_{\ell_2}|\\
&\lesssim
\tau
\sum_{\ell_1,\ell_2\in\mathbbm{Z}}
\big|(\hat{\tilde v}-\hat{\tilde w})_{-(\ell_1+\ell_2)}\big|
\,|\ell_1\ell_2|
\,|\hat{\tilde z}_{\ell_1}|
\,|\hat{\tilde y}_{\ell_2}|\\
&= \tau \sum_{\ell,k\in\mathbbm{Z}}
|(\hat{\tilde v}-\hat{\tilde w})_{-\ell}|
\,|k(\ell-k)|
\,|\hat{\tilde z}_{k}|
\,|\hat{\tilde y}_{\ell-k}|.
\end{align*}
Applying the Cauchy--Schwarz inequality, we obtain
\begin{align*}
|J_1^a|
&\lesssim
\tau
\Big(
\sum_{\ell\in\mathbbm{Z}}|(\hat{\tilde v}-\hat{\tilde w})_{-\ell}|^2
\Big)^{1/2}
\Big(
\sum_{\ell\in\mathbbm{Z}}
\Big(
\sum_{k\in\mathbbm{Z}}
|k|\,|\ell-k|\,|\hat{\tilde z}_{k}|\,|\hat{\tilde y}_{\ell-k}|
\Big)^2
\Big)^{1/2}.
\end{align*}
The second factor is exactly the $\ell^2$-norm of the discrete convolution $\tilde z^{(1)}\star \tilde y^{(1)}$. Hence,
\begin{align*}
|J_1^a|
&\lesssim
\tau
\|\tilde v-\tilde w\|\,
\|\tilde z^{(1)}\star \tilde y^{(1)}\|_{\ell^2}.
\end{align*}
Using the standard correspondence between discrete convolution and pointwise multiplication in physical space, we get
$
\|\tilde z^{(1)}\star \tilde y^{(1)}\|_{\ell^2}
\lesssim
\|\tilde z^{(1)}\tilde y^{(1)}\|.
$
Therefore,
\begin{align}
|J_1^a|
&\lesssim
\tau
\|\tilde v-\tilde w\|\,
\|\tilde z^{(1)}\tilde y^{(1)}\|\lesssim
\tau
\|\tilde v-\tilde w\|\,
\|\tilde y^{(1)}\|\,
\|\tilde z^{(1)}\|_{L^\infty}.
\end{align}
Finally, since
$$
\tilde y=\partial_x^{-1}(\tilde v-\tilde w),
\qquad
\|\tilde y^{(1)}\|=\|\tilde v-\tilde w\|,
$$
and ${\rm e}^{-t\partial_x^3}$ is an isometry on $L^2$, we conclude that
\begin{equation}
\label{stab0}
|J_1^a|
\lesssim
\tau
\|v-w\|^2
\|z^{(1)}\|_{L^\infty}.
\end{equation}

We next estimate $J_1^b$. Since $\partial_x^{-1}$ commutes with ${\rm e}^{\tau\partial_x^3}$ and due to the property $
\langle {\rm e}^{t\partial_x^3}f,g\rangle=\langle f,{\rm e}^{-t\partial_x^3}g\rangle
$, we rewrite $J_1^b$ as
\begin{align*}
J_1^b
&=
\Big\langle
({\rm e}^{-\tau\partial_x^3}\partial_x^{-1}\tilde z)
({\rm e}^{-\tau\partial_x^3}(\tilde v-\tilde w)),
{\rm e}^{-\tau\partial_x^3}\partial_x^{-1}(\tilde v-\tilde w)
\Big\rangle
-
\Big\langle
\partial_x^{-1}\tilde z\,(\tilde v-\tilde w),
\partial_x^{-1}(\tilde v-\tilde w)
\Big\rangle\\
&=
\Big\langle
\partial_x^{-1}\tilde z,\,
{\rm e}^{\tau\partial_x^3}
\Big[
({\rm e}^{-\tau\partial_x^3}(\tilde v-\tilde w))
({\rm e}^{-\tau\partial_x^3}\partial_x^{-1}(\tilde v-\tilde w))
\Big]
\Big\rangle
-
\Big\langle
\partial_x^{-1}\tilde z,\,
(\tilde v-\tilde w)\partial_x^{-1}(\tilde v-\tilde w)
\Big\rangle.
\end{align*}
Using
$
f\,\partial_x^{-1}f=\frac12\partial_x\big(\partial_x^{-1}f\big)^2
$ 
with $f=\tilde v-\tilde w$ and $f={\rm e}^{-\tau\partial_x^3}(\tilde v-\tilde w)$, respectively, the  integration by parts formula yields that 
\begin{align*}
J_1^b
&=
\frac12
\Big\langle
\partial_x^{-1}\tilde z,\,
{\rm e}^{\tau\partial_x^3}\partial_x
\Big(
{\rm e}^{-\tau\partial_x^3}\partial_x^{-1}(\tilde v-\tilde w)
\Big)^2
-
\partial_x
\Big(
\partial_x^{-1}(\tilde v-\tilde w)
\Big)^2
\Big\rangle\\
&=
\frac12
\Big\langle
\tilde z,\,
\big(\partial_x^{-1}(\tilde v-\tilde w)\big)^2
-
{\rm e}^{\tau\partial_x^3}
\Big(
{\rm e}^{-\tau\partial_x^3}\partial_x^{-1}(\tilde v-\tilde w)
\Big)^2
\Big\rangle.
\end{align*}
Passing to Fourier coefficients and arguing as before, we obtain
\begin{align*}
J_1^b
&=
\frac{1}{2\sqrt{2\pi}}
\sum_{\substack{\ell_1,\ell_2\ne 0}}
\hat{\tilde z}_{-(\ell_1+\ell_2)}
\frac{1}{(i\ell_1)(i\ell_2)}
\Big(
1-{\rm e}^{-i\tau\big((\ell_1+\ell_2)^3-\ell_1^3-\ell_2^3\big)}
\Big)
\hat{\tilde \delta}_{\ell_1}\hat{\tilde\delta}_{\ell_2},
\end{align*}
with the notation $\tilde\delta =\tilde v-\tilde w$. This infers
\begin{align*}
|J_1^b|
&\lesssim
\sum_{\substack{\ell_1,\ell_2\ne 0\\ \ell_1+\ell_2\ne 0}}
|\hat{\tilde z}_{-(\ell_1+\ell_2)}|
\frac{1}{|\ell_1\ell_2|}
\big|
1-{\rm e}^{-3i\tau\ell_1\ell_2(\ell_1+\ell_2)}
\big|
\,|\hat{\tilde\delta}_{\ell_1}|
\,|\hat{\tilde \delta}_{\ell_2}|\\
&\lesssim \tau
\sum_{\ell_1,\ell_2\in\mathbbm{Z}}
|(\ell_1+\ell_2)\hat{\tilde z}_{-(\ell_1+\ell_2)}|
\,|\hat{\tilde \delta}_{\ell_1}|
\,|\hat{\tilde \delta }_{\ell_2}|
\end{align*}
Similarly as done for the estimation of $J_1^a$, an application of the Cauchy--Schwarz inequality yields
\begin{align*}
|J_1^b|
&\lesssim
\tau
\Big(
\sum_{\ell\in\mathbbm{Z}}
\Big(
\sum_{k\in\mathbbm{Z}}
|\ell\,\hat{\tilde z}_{-\ell}|
\,|\hat{\tilde\delta}_{\ell-k}|
\Big)^2
\Big)^{1/2}
\Big(
\sum_{\ell\in\mathbbm{Z}}|\hat{\tilde\delta}_{\ell}|^2
\Big)^{1/2}\\
&\lesssim
\tau
\|\tilde z^{(1)}\star \tilde \delta^{(0)}\|_{\ell^2}
\,
\| \tilde\delta\|\\
&\lesssim \tau
\|\tilde z^{(1)}\|_{L^\infty}\,
\|\tilde \delta\|^2.
\end{align*}
Finally, since ${\rm e}^{-t\partial_x^3}$ is an isometry on $L^2$, we have
$
\|\tilde \delta\|=\|v-w\|,
$ and $
\|\tilde z^{(1)}\|_{L^\infty}=\|z^{(1)}\|_{L^\infty}.
$
Thus,
\begin{equation}
\label{stab1}
|J_1^b|
\lesssim
\tau
\|v-w\|^2
\|z^{(1)}\|_{L^\infty}.
\end{equation}

Combining \eqref{stab0} and \eqref{stab1}, we conclude that
\begin{equation}
\label{j1es}
J_1\lesssim \tau \|v-w\|^2\|z^{(1)}\|_{L^\infty}.
\end{equation}

We now estimate $J_2$. Set
$
a:=\partial_x^{-1}\tilde z,
$ and $b:=\partial_x^{-1}(\tilde v-\tilde w),
$ 
and define
$A:=
\big({\rm e}^{-\tau\partial_x^3}a\big)
\big({\rm e}^{-\tau\partial_x^3}b\big),
$ and $
B:=
{\rm e}^{-\tau\partial_x^3}(ab).$ 
Since $
J_2
=
\Big\|
{\rm e}^{\tau\partial_x^3}A-ab
\Big\|^2
=
\|A-B\|^2,$ we obtain
$$
J_2
=
\langle A-B,A\rangle-\langle A-B,B\rangle
=:J_2^a+J_2^b.
$$
where
\begin{align*}
J_2^a
&=
\Big\langle
\big({\rm e}^{-\tau\partial_x^3}\partial_x^{-1}\tilde z\big)
\big({\rm e}^{-\tau\partial_x^3}\partial_x^{-1}(\tilde v-\tilde w)\big)
-
{\rm e}^{-\tau\partial_x^3}
\big[\partial_x^{-1}\tilde z\,\partial_x^{-1}(\tilde v-\tilde w)\big],
\big({\rm e}^{-\tau\partial_x^3}\partial_x^{-1}\tilde z\big)
\big({\rm e}^{-\tau\partial_x^3}\partial_x^{-1}(\tilde v-\tilde w)\big)
\Big\rangle,\\
J_2^b
&=
-\Big\langle
\big({\rm e}^{-\tau\partial_x^3}\partial_x^{-1}\tilde z\big)
\big({\rm e}^{-\tau\partial_x^3}\partial_x^{-1}(\tilde v-\tilde w)\big)
-
{\rm e}^{-\tau\partial_x^3}
\big[\partial_x^{-1}\tilde z\,\partial_x^{-1}(\tilde v-\tilde w)\big],
{\rm e}^{-\tau\partial_x^3}
\big[\partial_x^{-1}\tilde z\,\partial_x^{-1}(\tilde v-\tilde w)\big]
\Big\rangle.
\end{align*}
We estimate $J_2^b$ first. Let
$
F:=
(\partial_x^{-1}\tilde z)(\partial_x^{-1}(\tilde v-\tilde w)).
$
As a result,
\begin{align*}
J_2^b
&=
-\Big\langle
\big({\rm e}^{-\tau\partial_x^3}\partial_x^{-1}\tilde z\big)
\big({\rm e}^{-\tau\partial_x^3}\partial_x^{-1}(\tilde v-\tilde w)\big)
-
{\rm e}^{-\tau\partial_x^3}F,
\,{\rm e}^{-\tau\partial_x^3}F
\Big\rangle.
\end{align*}
Using similar arguments as done for estimating the terms $J_1^a$ and $J_1^b$,
the $\ell$-th Fourier coefficient of
$
\big({\rm e}^{-\tau\partial_x^3}\partial_x^{-1}\tilde z\big)
\big({\rm e}^{-\tau\partial_x^3}\partial_x^{-1}(\tilde v-\tilde w)\big)
$
is given by
\begin{align*}
\displaystyle\frac{1}{\sqrt{2\pi}}\sum_{\substack{\ell_1,\ell_2\ne 0 \\ \ell_1+\ell_2=\ell}}
{\rm e}^{i\tau\ell_1^3}(i\ell_1)^{-1}\hat{\tilde z}_{\ell_1}\,
{\rm e}^{i\tau\ell_2^3}(i\ell_2)^{-1}
(\hat{\tilde v}_{\ell_2}-\hat{\tilde w}_{\ell_2}).
\end{align*}
Therefore, one gets
\begin{align*}
J_2^b
&=
-\sum_{\ell\in\mathbbm{Z}}
\Bigg[
\displaystyle\frac{1}{\sqrt{2\pi}}\sum_{\substack{\ell_1,\ell_2\ne 0\\ \ell_1+\ell_2=\ell}}
{\rm e}^{i\tau(\ell_1^3+\ell_2^3)}
\frac{1}{(i\ell_1)(i\ell_2)}
\hat{\tilde z}_{\ell_1}
(\hat{\tilde v}_{\ell_2}-\hat{\tilde w}_{\ell_2})
-
{\rm e}^{i\tau\ell^3}\hat F_\ell
\Bigg]
{\rm e}^{-i\tau\ell^3}\hat F_{-\ell}\\
&=
-\displaystyle\frac{1}{\sqrt{2\pi}}\sum_{\ell\in\mathbbm{Z}}
\sum_{\substack{\ell_1,\ell_2\ne 0\\ \ell_1+\ell_2=\ell}}
{\rm e}^{i\tau(\ell_1^3+\ell_2^3-\ell^3)}
\frac{1}{(i\ell_1)(i\ell_2)}
\hat{\tilde z}_{\ell_1}
(\hat{\tilde v}_{\ell_2}-\hat{\tilde w}_{\ell_2})
\hat F_{-\ell}
+\sum_{\ell\in\mathbbm{Z}}\hat F_\ell \hat F_{-\ell}.
\end{align*}
Noting that
$$
\hat F_\ell
=\displaystyle\frac{1}{\sqrt{2\pi}}
\sum_{\substack{\ell_1,\ell_2\ne 0\\ \ell_1+\ell_2=\ell}}
\frac{1}{(i\ell_1)(i\ell_2)}
\hat{\tilde z}_{\ell_1}
(\hat{\tilde v}_{\ell_2}-\hat{\tilde w}_{\ell_2}),
$$
the second term $\sum_{\ell\in\mathbbm{Z}}\hat F_\ell \hat F_{-\ell}$ can be rewritten as
\begin{align*}
\sum_{\ell\in\mathbbm{Z}}\hat F_\ell \hat F_{-\ell}
=
\displaystyle\frac{1}{\sqrt{2\pi}}
\sum_{\ell\in\mathbbm{Z}}
\sum_{\substack{\ell_1,\ell_2\ne 0\\ \ell_1+\ell_2=\ell}}
\frac{1}{(i\ell_1)(i\ell_2)}
\hat{\tilde z}_{\ell_1}
(\hat{\tilde v}_{\ell_2}-\hat{\tilde w}_{\ell_2})
\hat F_{-\ell}.
\end{align*}
Hence, using this, the relation in \eqref{alg_ide} and noting that the Fourier coefficient vanishes for $\ell_1+\ell_2=0$, we get
\begin{align*}
J_2^b
&=
-\displaystyle\frac{1}{\sqrt{2\pi}}\sum_{\ell\in\mathbbm{Z}}
\sum_{\substack{\ell_1,\ell_2\ne 0\\ \ell_1+\ell_2=\ell}}
\frac{1}{(i\ell_1)(i\ell_2)}
\Big(
{\rm e}^{i\tau(\ell_1^3+\ell_2^3-\ell^3)}-1
\Big)
\hat{\tilde z}_{\ell_1}
(\hat{\tilde v}_{\ell_2}-\hat{\tilde w}_{\ell_2})
\hat F_{-\ell}\\
&=
-\displaystyle\frac{1}{\sqrt{2\pi}}\sum_{\substack{\ell_1,\ell_2\ne 0\\ \ell_1+\ell_2\ne 0}}
\hat F_{-(\ell_1+\ell_2)}
\frac{1}{(i\ell_1)(i\ell_2)}
\Big(
{\rm e}^{-3i\tau\ell_1\ell_2(\ell_1+\ell_2)}
-1
\Big)
\hat{\tilde z}_{\ell_1}
(\hat{\tilde v}_{\ell_2}-\hat{\tilde w}_{\ell_2})
\end{align*}
Thus, in a similar manner as done for the estimation of $J_1$, we obtain
\begin{align*}
|J_2^b|
&\lesssim
\sum_{\substack{\ell_1,\ell_2\ne 0\\ \ell_1+\ell_2\ne 0}}
|\hat F_{-(\ell_1+\ell_2)}|
\frac{1}{|\ell_1\ell_2|}
\big|
1-{\rm e}^{-3i\tau\ell_1\ell_2(\ell_1+\ell_2)}
\big|
\,|\hat{\tilde z}_{\ell_1}|
\,|\hat{\tilde v}_{\ell_2}-\hat{\tilde w}_{\ell_2}|\\
&\lesssim
\tau
\sum_{\ell,k\in\mathbbm{Z}}
|\ell \hat F_{-\ell}|
\,|\hat{\tilde z}_k|
\,|\hat{\tilde v}_{\ell-k}-\hat{\tilde w}_{\ell-k}|\\
&\lesssim
\tau
\Big(
\sum_{\ell}|\ell \hat F_{-\ell}|^2
\Big)^{1/2}
\Big(
\sum_{\ell}
\Big(
\sum_k
|\hat{\tilde z}_k|
\,|\hat{\tilde v}_{\ell-k}-\hat{\tilde w}_{\ell-k}|
\Big)^2
\Big)^{1/2}\\
&\lesssim
\tau
\|F\|_{H^1}\,
\|\tilde z^{(0)}\star(\tilde v-\tilde w)^{(0)}\|_{\ell^2}.
\end{align*}
Using the standard convolution estimate in Fourier space, equivalently the product estimate in physical space, we infer that
\begin{align*}
\|\tilde z^{(0)}\star(\tilde v-\tilde w)^{(0)}\|_{\ell^2}
&\le C
\|\tilde z\,(\tilde v-\tilde w)\|\le
\|\tilde z^{(0)}\|_{L^\infty}\,
\|\tilde v-\tilde w\|.
\end{align*}
Therefore,
\begin{align*}
|J_2^b|
&\lesssim
\tau
\|F\|_{H^1}\,
\|\tilde z^{(0)}\|_{L^\infty}\,
\|\tilde v-\tilde w\|.
\end{align*}
Finally, by the bilinear estimate \eqref{bil_est},
\begin{align*}
\|F\|_{H^1}
&=
\|(\partial_x^{-1}\tilde z)\big(\partial_x^{-1}(\tilde v-\tilde w)\big)\|_{H^1}\lesssim
\|\tilde z\|\,
\|\tilde v-\tilde w\|,
\end{align*}
and thus,
\begin{equation}
\label{j2b_est}
|J_2^b|
\lesssim
\tau
\|v-w\|^2
\|z^{(0)}\|_{L^\infty}\|z\|.
\end{equation}

We next estimate $J_2^a$. Set
$$
G:=
\big({\rm e}^{-\tau\partial_x^3}\partial_x^{-1}\tilde z\big)
\big({\rm e}^{-\tau\partial_x^3}\partial_x^{-1}(\tilde v-\tilde w)\big)
=A.
$$
Arguing exactly as the estimation of the term $J_2^b$, with $F$ replaced by $G$, we obtain
\begin{align*}
|J_2^a|
&\lesssim
\tau
\|G\|_{H^1}\,
\|\tilde z^{(0)}\star(\tilde v-\tilde w)^{(0)}\|_{\ell^2}.
\end{align*}
Since ${\rm e}^{-\tau\partial_x^3}$ is an isometry and $\partial_x^{-1}$ commutes with ${\rm e}^{-\tau\partial_x^3}$, the bilinear estimate \eqref{bil_est} yields
\begin{align*}
\|G\|_{H^1}
&=
\Big\|
\big({\rm e}^{-\tau\partial_x^3}\partial_x^{-1}\tilde z\big)
\big({\rm e}^{-\tau\partial_x^3}\partial_x^{-1}(\tilde v-\tilde w)\big)
\Big\|_{H^1}\lesssim
\|\tilde z\|\,
\|\tilde v-\tilde w\|.
\end{align*}
Therefore,
\begin{equation}
\label{j2a_est}
|J_2^a|
\lesssim
\tau
\|v-w\|^2
\|z^{(0)}\|_{L^\infty}\|z\|.
\end{equation}
Combining \eqref{j2b_est} and \eqref{j2a_est}, we conclude that
\begin{equation}
\label{j2es}
J_2
\lesssim
\tau
\|v-w\|^2
\|z^{(0)}\|_{L^\infty}\|z\|.
\end{equation}

Finally, combining \eqref{j1es} and \eqref{j2es}, also noting that the norms  $\|z^{(0)}\|_{L^\infty}$ and $\|z\|$ are controlled by the norm $\|z^{(1)}\|_{L^\infty}$, we obtain
$$
\|\Phi_t^\tau(v,z)-\Phi_t^\tau(w,z)\|^2
\le
\Big(
1+c\tau
\Big)\|v-w\|^2,
$$
where the constant $c>0$ depends on $\|z^{(1)}\|_{L^\infty}$. Taking expectations on both sides and then the square roots yield the desired result.
\end{proof}

We also need the following stability estimate for the map $\Phi_t^\tau$ with respect to its second argument.

\begin{lemma}
\label{det_conv_lem}
For any nonrandom $w,z\in L^2$, any $t,\tau>0$, and any $\mathcal F_t$-measurable random variable $v$ satisfying $v^{(0)}\in L^2(\Omega;L^\infty)$, one has
$$
\big\|\Phi_t^\tau(v,w)-\Phi_t^\tau(v,z)\big\|_{L^2(\Omega; L^2)}
\lesssim
\sqrt{\tau} \big\|w-z\big\| \big\|v^{(0)}\big\|_{L^2(\Omega; L^\infty)}.
$$
\end{lemma}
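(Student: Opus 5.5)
The plan is to exploit that $\Phi_t^\tau$ is bilinear in $(v,z)$ apart from the additive terms $v$ and the stochastic convolution, which cancel in the difference. Set $\delta:=w-z$, which is nonrandom. By \eqref{num_map},
\begin{align*}
\Phi_t^\tau(v,w)-\Phi_t^\tau(v,z)
={}&\frac{2\mu}{3}\Big[{\rm e}^{(t+\tau)\partial_x^3}\big({\rm e}^{-(t+\tau)\partial_x^3}\partial_x^{-1}\delta\big)\big({\rm e}^{-(t+\tau)\partial_x^3}\partial_x^{-1}v\big)\\
&\qquad-{\rm e}^{t\partial_x^3}\big({\rm e}^{-t\partial_x^3}\partial_x^{-1}\delta\big)\big({\rm e}^{-t\partial_x^3}\partial_x^{-1}v\big)\Big].
\end{align*}
Because only the zero-mode-free parts enter through $\partial_x^{-1}$, all sums below run over $\ell_1,\ell_2\neq0$.

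First I will compute the Fourier coefficients exactly as in the derivation of \eqref{fou0}. The $\ell$-th coefficient of the difference is, up to constants,
\[
\sum_{\substack{\ell_1,\ell_2\ne0\\ \ell_1+\ell_2=\ell}}
{\rm e}^{-it\cdot 3\ell_1\ell_2\ell}\,
\frac{{\rm e}^{-3i\tau\ell_1\ell_2\ell}-1}{-3\ell_1\ell_2}\,
\hat\delta_{\ell_1}\hat v_{\ell_2},
\]
using \eqref{alg_ide}. The terms with $\ell=0$ vanish.

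The key step replaces the full-$\tau$ bound $|{\rm e}^{ix}-1|\le|x|$, which was used in Lemma \ref{stab_lem} and costs a derivative, by the interpolated bound $|{\rm e}^{ix}-1|\le 2^{1/2}|x|^{1/2}$. This gives
\[
\Big|\frac{{\rm e}^{-3i\tau\ell_1\ell_2\ell}-1}{3\ell_1\ell_2}\Big|
\lesssim \tau^{1/2}\frac{|\ell_1+\ell_2|^{1/2}}{|\ell_1\ell_2|^{1/2}}.
\]
Since $|\ell_1+\ell_2|\le 2\max(|\ell_1|,|\ell_2|)$ and both indices are nonzero integers, the ratio on the right is bounded by $\sqrt2/\min(|\ell_1|,|\ell_2|)^{1/2}\le\sqrt2$. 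Hence the coefficient is bounded by $C\sqrt\tau\,(|\hat\delta|\star|\hat v|)_\ell$.

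Next I use Parseval and the correspondence between discrete convolution and pointwise products, as in the proof of Lemma \ref{stab_lem}. This yields
\[
\|\Phi_t^\tau(v,w)-\Phi_t^\tau(v,z)\|\lesssim\sqrt\tau\,\|\delta^{(0)}v^{(0)}\|\le\sqrt\tau\,\|\delta^{(0)}\|\,\|v^{(0)}\|_{L^\infty}=\sqrt\tau\,\|w-z\|\,\|v^{(0)}\|_{L^\infty}.
\]
Here $\|\delta^{(0)}\|=\|\delta\|$ because $\delta^{(0)}$ has Fourier coefficients $|\hat\delta_\ell|$. The bound holds pathwise. Squaring, taking expectations, and using that $\delta$ is nonrandom gives the claim with $\|v^{(0)}\|_{L^2(\Omega;L^\infty)}$.

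The only delicate point is the symbol estimate: the half-power of $\tau$ must exactly absorb the factor $|\ell_1\ell_2\ell|^{1/2}$ against the $1/|\ell_1\ell_2|$ gained from $\partial_x^{-1}$. Checking this uses the nonzero-mode restriction, which is why Assumption \ref{ass_zero_mode} and \eqref{zero_det} matter. Everything else is routine.
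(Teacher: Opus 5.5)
Your proof is correct, but it organizes the estimate differently from the paper.

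The paper does not bound the difference $\Xi$ directly. Following its treatment of $J_2$ in Lemma \ref{stab_lem}, it proceeds as follows:
\begin{itemize}
\item it writes $\|\Xi\|^2=\xi_1+\xi_2$ as two inner products;
\item it applies the full first-order phase bound $|1-{\rm e}^{-3i\tau\ell_1\ell_2(\ell_1+\ell_2)}|/|\ell_1\ell_2|\lesssim\tau|\ell_1+\ell_2|$ to one copy of the difference;
\item it moves the factor $|\ell_1+\ell_2|$ onto the other entry of the inner product, namely $F=(\partial_x^{-1}(\tilde w-\tilde z))(\partial_x^{-1}\tilde v)$ or its dispersed analogue;
\item it closes with a bilinear estimate of the form $\|F\|_{H^1}\lesssim\|w-z\|\,\|v^{(0)}\|_{L^\infty}$, giving $\|\Xi\|^2\lesssim\tau\|w-z\|^2\|v^{(0)}\|_{L^\infty}^2$.
\end{itemize}

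You instead spend only half a power of the phase, $|{\rm e}^{ix}-1|\le\sqrt2\,|x|^{1/2}$. You then observe that the remaining symbol $|\ell_1+\ell_2|^{1/2}|\ell_1\ell_2|^{-1/2}$ is bounded by $\sqrt2$ on nonzero integers. Parseval and the convolution--product correspondence then give $\|\Xi\|\lesssim\sqrt\tau\,\|w-z\|\,\|v^{(0)}\|_{L^\infty}$ pathwise in one step. This needs no inner-product splitting and no $H^1$ product estimate. It is the same interpolation the paper itself uses for \eqref{t4} in Appendix \ref{app_time_reg}.

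The two bounds have identical strength. The paper's squaring device buys uniformity with Lemma \ref{stab_lem}, where a full power of $\tau$ on one factor is genuinely needed for the cross term $J_1$ to produce the factor $1+c\tau$. For a pure squared-norm quantity like this one, your direct half-power estimate is shorter and more transparent.

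One small correction to your closing remark. Both arguments use the $\partial_x^{-1}$ form of \eqref{num_map}, so the restriction $\ell_1,\ell_2\neq0$ is built into the map itself. Assumption \ref{ass_zero_mode} and \eqref{zero_det} are what make this form agree with the time-integral form; they are not an extra ingredient of the estimate.
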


\begin{proof}By the same change of variables as in the proof of Lemma \ref{stab_lem}, we obtain
\begin{align*}
&\big\|\Phi_t^\tau(v,w)-\Phi_t^\tau(v,z)\big\|^2
\frac{4\mu^2}{9}\,\langle \Xi,\Xi\rangle
=
\frac{4\mu^2}{9}\big(\xi_1+\xi_2\big),
\end{align*}
where
\begin{align*}
\Xi
&=
{\rm e}^{(t+\tau)\partial_x^3}
\Big(
{\rm e}^{-\tau\partial_x^3}\partial_x^{-1}(\tilde w-\tilde z)
\Big)
\Big(
{\rm e}^{-\tau\partial_x^3}\partial_x^{-1}\tilde v
\Big)
-
{\rm e}^{t\partial_x^3}
\Big(
\partial_x^{-1}(\tilde w-\tilde z)
\Big)
\Big(
\partial_x^{-1}\tilde v
\Big),
\end{align*}
and
\begin{align*}
\xi_1
&=
\Big\langle
\big({\rm e}^{-\tau\partial_x^3}\partial_x^{-1}(\tilde w-\tilde z)\big)
\big({\rm e}^{-\tau\partial_x^3}\partial_x^{-1}\tilde v\big)
-
{\rm e}^{-\tau\partial_x^3}
\big[
\partial_x^{-1}(\tilde w-\tilde z)\,\partial_x^{-1}\tilde v
\big],
\big({\rm e}^{-\tau\partial_x^3}\partial_x^{-1}(\tilde w-\tilde z)\big)
\big({\rm e}^{-\tau\partial_x^3}\partial_x^{-1}\tilde v\big)
\Big\rangle,\\
\xi_2
&=
-
\Big\langle
{\rm e}^{\tau\partial_x^3}
\Big[
\big({\rm e}^{-\tau\partial_x^3}\partial_x^{-1}(\tilde w-\tilde z)\big)
\big({\rm e}^{-\tau\partial_x^3}\partial_x^{-1}\tilde v\big)
\Big]
-
(\partial_x^{-1}(\tilde w-\tilde z))(\partial_x^{-1}\tilde v),
(\partial_x^{-1}(\tilde w-\tilde z))(\partial_x^{-1}\tilde v)
\Big\rangle.
\end{align*}
Let
$
F:=(\partial_x^{-1}(\tilde w-\tilde z))(\partial_x^{-1}\tilde v).
$
Arguing as in the proof of Lemma \ref{stab_lem} for estimating the term $J_1$ and $J_2$, and using interpolation together with H\"older's inequality, we obtain
\begin{align*}
|\xi_2|
&=\displaystyle\frac{1}{\sqrt{2\pi}}
\Big|
\sum_{\substack{\ell_1,\ell_2\ne0\\ \ell_1+\ell_2\ne 0}}
\hat F_{-(\ell_1+\ell_2)}
\frac{1}{\ell_1\ell_2}
\big(1-{\rm e}^{-3i\tau\ell_1\ell_2(\ell_1+\ell_2)}\big)
(\hat{\tilde w}_{\ell_1}-\hat{\tilde z}_{\ell_1})\hat{\tilde v}_{\ell_2}
\Big|\\
&\lesssim
\tau
\sum_{\ell_1,\ell_2\in\mathbbm{Z}}
|\hat F_{-(\ell_1+\ell_2)}|
\,|\ell_1+\ell_2|\,
|\hat{\tilde w}_{\ell_1}-\hat{\tilde z}_{\ell_1}|\,
|\hat{\tilde v}_{\ell_2}|\\
&\lesssim
\tau
\|F\|_{H^1}\,
\|(\tilde w-\tilde z)^{(0)}\star \tilde v^{(0)}\|_{\ell^2}\\
&\lesssim
\tau
\|F\|_{H^1}\,
\|w-z\|\,
\|v^{(0)}\|_{L^\infty}.
\end{align*}
Using the bilinear estimate \eqref{bil_est}, we deduce that
$$
|\xi_2|
\lesssim
\tau
\|(\partial_x^{-1}(w-z))(\partial_x^{-1}v)\|_{H^1}\,
\|w-z\|\,
\|v^{(0)}\|_{L^\infty}
\lesssim
\tau \|w-z\|^2\|v^{(0)}\|_{L^\infty}^2.
$$
A similar estimate holds for $\xi_1$. Therefore,
$$
\|\Phi_t^\tau(v,w)-\Phi_t^\tau(v,z)\|^2
\lesssim
\tau \|w-z\|^2\|v^{(0)}\|_{L^\infty}^2.
$$
Taking expectations on both sides and passing to square roots complete the proof.
\end{proof}

\subsection{Global error bound under $H^1$-regularity}

We now have all the ingredients needed to state and prove a convergence result for the low-regularity scheme \eqref{met}. We  impose the following assumption on the deterministic approximation of $\psi$.

\begin{ass}
\label{ass_det_conv}
Assume that there exists a sufficiently small $\tau_0>0$ such that the deterministic numerical solution $\psi_n$ satisfies $\psi_n^{(1)}\in L^\infty$ uniformly in $n$, and
$$
\max_n \|\psi(t_n)-\psi_n\|\lesssim \tau^\beta,
$$
for some $\beta\in\big(\frac12,1\big]$ and for all $\tau\in(0,\tau_0)$.
\end{ass}

The uniform bound on $\psi_n^{(1)}$ is used only in the stability estimate of the one-step map. 
\begin{example}
An example of a deterministic integrator for $\psi$ arises from \cite{kat0}, where the authors provided the following numerical integrator
\begin{equation}
\label{kat_ex0}
    \psi_n={\rm e}^{-\tau\partial_x^3}\psi_{n-1}+\frac{1}{6}\big({\rm e}^{-\tau\partial_x^3}\partial_x^{-1}\psi_{n-1}\big)^2-\frac{1}{6}{\rm e}^{-\tau\partial_x^3} (\partial_x^{-1}\psi_{n-1})^2.
\end{equation}
As stated in \cite{kat0}, this integrator satisfies \eqref{zero_det}. Also, this method satisfies Assumption \eqref{ass_det_conv} with $\beta=1$. A numerical evidence of this will be provided in Section \ref{exp_section}. 
\end{example}

%{\color{green} this writing is worse than the original version where we put the example after this assumption}

Combining the local error estimate, the two stability bounds, and Assumption~\ref{ass_det_conv}, we obtain the first convergence theorem.

\begin{theorem}
\label{conv_thm}
Assume that $Q^{1/2}\in\mathcal{L}_2^1$, and $\xi\in H^2$. Suppose moreover that Assumption \ref{ass_det_conv} holds true.  Then, the numerical solution $\chi_n$ in \eqref{met} satisfies
$$
\max_{0\le n\le N}
\|\chi(t_n)- \chi_n\|_{L^2(\Omega;L^2)}
\lesssim
\tau^{\min(\frac12,\beta-\frac12)},
$$
for all $\tau\in(0,\tau_0)$.
\end{theorem}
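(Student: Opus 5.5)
We argue by a Lady Windermere's fan argument in the twisted variables. Since ${\rm e}^{-t_n\partial_x^3}$ is an isometry on $L^2$ by \eqref{iso_twi}, it suffices to bound $e_n:=\tilde\chi(t_n)-\tilde\chi_n$ in $L^2(\Omega;L^2)$. Using $\tilde\chi(t_{n+1})=\varphi_{t_n}^\tau(\tilde\chi(t_n))$ and $\tilde\chi_{n+1}=\Phi_{t_n}^\tau(\tilde\chi_n,\tilde\psi_n)$, we split
\begin{align*}
e_{n+1}={}&\bigl[\varphi_{t_n}^\tau(\tilde\chi(t_n))-\Phi_{t_n}^\tau(\tilde\chi(t_n),\tilde\psi(t_n))\bigr]
+\bigl[\Phi_{t_n}^\tau(\tilde\chi(t_n),\tilde\psi(t_n))-\Phi_{t_n}^\tau(\tilde\chi(t_n),\tilde\psi_n)\bigr]\\
&+\bigl[\Phi_{t_n}^\tau(\tilde\chi(t_n),\tilde\psi_n)-\Phi_{t_n}^\tau(\tilde\chi_n,\tilde\psi_n)\bigr]=:A_n+B_n+D_n .
\end{align*}
Lemma~\ref{Loc_err_lem} gives $\|A_n\|_{L^2(\Omega;L^2)}\lesssim\tau^{3/2}$. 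Lemma~\ref{det_conv_lem} with $v=\tilde\chi(t_n)$ gives $\|B_n\|_{L^2(\Omega;L^2)}\lesssim\sqrt\tau\,\tau^\beta\,\|\tilde\chi(t_n)^{(0)}\|_{L^2(\Omega;L^\infty)}$. Lemma~\ref{stab_lem}, applied with $z=\tilde\psi_n$ (admissible since $\psi_n^{(1)}\in L^\infty$ uniformly by Assumption~\ref{ass_det_conv}, and the twist does not change $\|\cdot^{(1)}\|_{L^\infty}$-type bounds up to the isometry used in that lemma), gives $\|D_n\|\le(1+c\tau)^{1/2}\|e_n\|$ with $c$ uniform in $n$.

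For $B_n$ we need $\sup_t\|\chi(t)^{(0)}\|_{L^2(\Omega;L^\infty)}<\infty$. Since $\|f^{(0)}\|_{L^\infty}\lesssim\sum_\ell|\hat f_\ell|\lesssim\|f\|_{H^1}$ by Cauchy--Schwarz with $\sum(1+\ell^2)^{-1}<\infty$, this follows from Lemma~\ref{lem_H1b}. Thus $\|B_n\|\lesssim\tau^{\beta+1/2}$.

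To avoid losing the orthogonality, we would estimate the squared norm: $\mathbb E\|e_{n+1}\|^2\le(1+\tau)\mathbb E\|D_n\|^2+(1+\tau^{-1})\mathbb E\|A_n+B_n\|^2$, giving
\[
\mathbb E\|e_{n+1}\|^2\le(1+C\tau)\,\mathbb E\|e_n\|^2+C\tau^{-1}\bigl(\tau^{3}+\tau^{2\beta+1}\bigr).
\]
Since $e_0=0$, a discrete Gronwall over $N=T/\tau$ steps yields $\mathbb E\|e_n\|^2\lesssim \tau^{-2}(\tau^3+\tau^{2\beta+1})=\tau+\tau^{2\beta-1}$. Hence $\|e_n\|_{L^2(\Omega;L^2)}\lesssim\tau^{\min(1/2,\beta-1/2)}$. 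The requirement $\beta>1/2$ ensures a positive rate.

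The main obstacle is that the local errors are not martingale increments. Both $A_n$ and $B_n$ are deterministic-integral differences without mean-zero structure, so one cannot exploit the $\tau^{3/2}$ local error to get order $1$. The crude Young splitting above is exactly what produces the loss to $\tau^{1/2}$, and it matches the stated rate. A secondary point is checking that the constant $c$ in Lemma~\ref{stab_lem} stays uniform in $n$, which is guaranteed by the uniform bound in Assumption~\ref{ass_det_conv}, and that $\tilde\chi(t_n)$ is $\mathcal F_{t_n}$-measurable as those lemmas require.
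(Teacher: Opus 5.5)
Your proposal is correct and is essentially the paper's proof. It uses the same three-term splitting of $e_{n+1}$ in twisted variables, the same Lemmas \ref{Loc_err_lem}, \ref{det_conv_lem} and \ref{stab_lem}, and a discrete Gronwall argument on the recursion $\mathbbm{E}\|e_{n+1}\|^2\le(1+C\tau)\mathbbm{E}\|e_n\|^2+C(\tau^2+\tau^{2\beta})$.

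The only difference is cosmetic. You reach this recursion in one step via $\|a+b\|^2\le(1+\tau)\|a\|^2+(1+\tau^{-1})\|b\|^2$. The paper instead expands $\mathbbm{E}\|e_{n+1}\|^2$ into six terms and bounds the cross terms $I_4,I_5,I_6$ by Cauchy--Schwarz and Young with the same $\tau$-weighting.
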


\begin{proof}
We first prove the result for the twisted variable $\tilde \chi$. Set
$$
e_n:=\tilde \chi(t_n)-\tilde \chi_n.
$$
Using the exact one-step map and the numerical one-step map, we write
\begin{align*}
\mathbbm{E}\|e_{n+1}\|^2
=&
\mathbbm{E}\big\|
\varphi_{t_n}^\tau(\tilde \chi(t_n))
-\Phi_{t_n}^\tau(\tilde \chi_n,\tilde\psi_n)
\big\|^2\\
=&
\mathbbm{E}\Big\|
\big(\varphi_{t_n}^\tau(\tilde \chi(t_n))
-\Phi_{t_n}^\tau(\tilde \chi(t_n),\tilde \psi(t_n)\big)
+\big(\Phi_{t_n}^\tau(\tilde \chi(t_n),\tilde\psi(t_n))
-\Phi_{t_n}^\tau(\tilde \chi(t_n),\tilde\psi_n)\big)\\
&
+\big(\Phi_{t_n}^\tau(\tilde \chi(t_n),\tilde\psi_n)
-\Phi_{t_n}^\tau(\tilde \chi_n,\tilde\psi_n)\big)
\Big\|^2.
\end{align*}
For convenience, define
\begin{align*}
A_n
&:=
\varphi_{t_n}^\tau(\tilde \chi(t_n))
-\Phi_{t_n}^\tau(\tilde \chi(t_n),\tilde \psi(t_n)),\\
B_n
&:=
\Phi_{t_n}^\tau(\tilde \chi(t_n),\tilde\psi(t_n))
-\Phi_{t_n}^\tau(\tilde \chi(t_n),\tilde\psi_n),\\
C_n
&:=\Phi_{t_n}^\tau(\tilde \chi(t_n),\tilde\psi_n)
-\Phi_{t_n}^\tau(\tilde \chi_n,\tilde\psi_n).
\end{align*}
Then
$
e_{n+1}=A_n+B_n+C_n,
$
and therefore
\begin{align*}
\mathbbm{E}\|e_{n+1}\|^2
&=I_1+I_2+I_3+I_4+I_5+I_6.
\end{align*}
where 
\begin{align*}
&I_1:=\mathbbm{E}\|A_n\|^2, \quad 
I_2:=\mathbbm{E}\|B_n\|^2, \quad 
I_3:=\mathbbm{E}\|C_n\|^2,\\
&I_4:=2\mathbbm{E}\langle A_n,B_n\rangle, \quad 
I_5:=2\mathbbm{E}\langle A_n,C_n\rangle,\quad 
I_6:=2\mathbbm{E}\langle B_n,C_n\rangle.
\end{align*}

By Lemma  \ref{Loc_err_lem}, we have
$
I_1\lesssim \tau^3.
$
Next, by Lemma \ref{det_conv_lem} and Assumption \ref{ass_det_conv},
\begin{align*}
I_2
&\lesssim
\tau \|\tilde\psi(t_n)-\tilde\psi_n\|^2\,
\mathbbm{E}\|\tilde \chi^{(0)}(t_n)\|_{L^\infty}^2\lesssim
\tau^{2\beta+1}\,
\mathbbm{E}\|\tilde \chi^{(0)}(t_n)\|_{L^\infty}^2.
\end{align*}
Noting that, by assumptions, the bounds in \eqref{reg_n0} hold true and that $\|\tilde\chi^{(0)}\|_{L^2(\Omega; L^\infty)}\lesssim \|\tilde\chi\|_{L^2(\Omega; H^1)}$, we get $I_2\lesssim \tau^{2\beta+1}$.
Moreover, Lemma \ref{stab_lem} yields
$$
I_3\le (1+c\tau)\mathbbm{E}\|e_n\|^2.
$$
For $I_4$, by the Cauchy--Schwarz inequality and Young's inequality,
\begin{align*}
|I_4|
&=
2\big|\mathbbm{E}\langle A_n,B_n\rangle\big|
\le
2\big(\mathbbm{E}\|A_n\|^2\big)^{1/2}\big(\mathbbm{E}\|B_n\|^2\big)^{1/2}=
2 I_1^{1/2}I_2^{1/2}\\
&\le
I_1+I_2
\lesssim
\tau^3+\tau^{2\beta+1}.
\end{align*}
$
$ Noting that $\beta\le 1$, we get
\begin{align}
\label{I4_est}
|I_4|
&\lesssim
\tau^{2\beta+1}.
\end{align}
For $I_5$, again by Cauchy--Schwarz inequality and Lemma \ref{stab_lem},
\begin{align*}
|I_5|
&=
2\big|\mathbbm{E}\langle A_n,C_n\rangle\big|
\le
2\big(\mathbbm{E}\|A_n\|^2\big)^{1/2}\big(\mathbbm{E}\|C_n\|^2\big)^{1/2}\lesssim
\tau^{3/2}(1+c\tau)^{1/2}\big(\mathbbm{E}\|e_n\|^2\big)^{1/2}.
\end{align*}
Applying Young's inequality 
we obtain
\begin{align}
\label{I5_est}
|I_5|
&\lesssim
\tau^2+\tau(1+c\tau)\mathbbm{E}\|e_n\|^2
\lesssim
\tau^2+\tau\mathbbm{E}\|e_n\|^2.
\end{align}
Finally, for $I_6$, Cauchy--Schwarz inequality gives
\begin{align*}
|I_6|
&=
2\big|\mathbbm{E}\langle B_n,C_n\rangle\big|
\le
2\big(\mathbbm{E}\|B_n\|^2\big)^{1/2}\big(\mathbbm{E}\|C_n\|^2\big)^{1/2}\\
&\le C
\Big(
\tau^{2\beta+1}\mathbbm{E}\|\tilde \chi^{(0)}(t_n)\|_{L^\infty}^2
\Big)^{1/2}
\Big(
(1+c\tau)\mathbbm{E}\|e_n\|^2
\Big)^{1/2}\\
&=C
\tau^{\beta+\frac12}
\big(\mathbbm{E}\|\tilde \chi^{(0)}(t_n)\|_{L^\infty}^2\big)^{1/2}
(1+c\tau)^{1/2}
\big(\mathbbm{E}\|e_n\|^2\big)^{1/2}.
\end{align*}
Using the same argument as in the estimation of $I_2$ and Young's inequality, we deduce
\begin{align}
\label{I6_est}
|I_6|
&\lesssim
\tau^{2\beta}+\tau(1+c\tau)\mathbbm{E}\|e_n\|^2
\lesssim
\tau^{2\beta}+\tau\mathbbm{E}\|e_n\|^2.
\end{align}

Collecting the above estimates, and using \eqref{I4_est}, \eqref{I5_est}, and \eqref{I6_est}, we arrive at
\begin{align}
\label{es0}
\mathbbm{E}\|e_{n+1}\|^2
\le&
(1+c\tau)\mathbbm{E}\|e_n\|^2
+
c\Big(
\tau^3+\tau^2+\tau^{2\beta+1}+\tau^{2\beta}
\Big)\\
\label{es0b}
\le&
(1+c\tau)\mathbbm{E}\|e_n\|^2
+
c\,\tau^{1+\min(2\beta-1,1)}.
\end{align}
A standard discrete Gronwall argument applied to \eqref{es0b} yields
\begin{equation}
\label{es1}
\max_{0\le n\le N}\mathbbm{E}\|e_n\|^2
\lesssim
\tau^{\min(2\beta-1,1)}.
\end{equation}
Taking square roots, we conclude that
\begin{equation}
\label{es2}
\max_{0\le n\le N}
\big\|\tilde \chi(t_n)-\tilde \chi_n\big\|_{L^2(\Omega;L^2)}
\lesssim
\tau^{\min(\frac12,\beta-\frac12)}.
\end{equation}
Since the Airy flow is an isometry on $L^2$, transforming back to the original variable gives the asserted estimate for $\chi_n$.
\end{proof}

% The exponent $\beta-\frac12$ in above theorem reflects the influence of the deterministic approximation $\psi_n$ on the stochastic fluctuation equation. Although the deterministic solver has order $\beta$, its error enters the one-step map through a bilinear term and is accumulated over $O(\tau^{-1})$ time steps. Thus the deterministic coefficient error contributes at order $\tau^{\beta-1/2}$ to the global fluctuation error. When $\beta=1$, this gives the order $1/2$ estimate for the linearized stochastic fluctuation.

% We next transfer the error estimate for the linearized fluctuation to the approximation
% \begin{equation}
% \label{full_approx_met}
% u_n=\psi_n+\varepsilon \chi_n,
% \end{equation}
% of the solution to the original stochastic KdV equation \eqref{kdv_eq}, where $\chi_n$ is generated by \eqref{met}. The following corollary combines the fluctuation error with the small-noise perturbative decomposition $u=\psi+\varepsilon u^\varepsilon$. It displays explicitly the three contributions to the total strong error: the deterministic discretization error, the small-noise linearization error, and the stochastic fluctuation discretization error.
With Theorem \ref{conv_thm} at hand, we can state the following Corollary, establishing the first strong convergence result of this paper, for the numerical approximation $u_n=\psi_n+\varepsilon \chi_n$ for the solution $u(t_n)$ to the stochastic KdV equation \eqref{kdv_eq}.

\begin{corollary}
\label{cor1}
Under the assumptions of Theorem \ref{conv_thm}, one has
$$
\max_n \big\|u(t_n)-u_n\big\|_{L^2(\Omega;L^2)}
\lesssim
\tau^\beta+\varepsilon^2+\varepsilon\tau^{\min(\frac12,\beta-\frac12)}.
$$
\end{corollary}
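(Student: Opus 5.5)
The plan is to use the perturbative decomposition \eqref{ans}, $u(t_n)=\psi(t_n)+\varepsilon u^\varepsilon(t_n)$, together with $u_n=\psi_n+\varepsilon\chi_n$, and split the error by the triangle inequality in $L^2(\Omega;L^2)$:
\begin{align*}
\|u(t_n)-u_n\|_{L^2(\Omega;L^2)}
\le{}& \|\psi(t_n)-\psi_n\|
+\varepsilon\|u^\varepsilon(t_n)-\chi(t_n)\|_{L^2(\Omega;L^2)}\\
&+\varepsilon\|\chi(t_n)-\chi_n\|_{L^2(\Omega;L^2)}.
\end{align*}
Since $\psi$ and $\psi_n$ are nonrandom, the first term is simply the deterministic $L^2$ error.

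Each term is then bounded by an earlier result.

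\textbf{Deterministic term.} Assumption \ref{ass_det_conv} gives $\max_n\|\psi(t_n)-\psi_n\|\lesssim\tau^\beta$.

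\textbf{Linearization term.} Apply Proposition \ref{prop_wp_lin} with $p=1$. Its hypotheses ($Q^{1/2}\in\mathcal L_2^1$, $\xi\in H^2$) are contained in those of Theorem \ref{conv_thm}. It gives $\sup_t\|u^\varepsilon(t)-\chi(t)\|_{L^2(\Omega;L^2)}\le C\varepsilon$ with $C$ independent of $\varepsilon$, so this contribution is $\lesssim\varepsilon^2$.

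We must check that the constant is finite. It depends on $\sup_t\|\partial_x\psi(t)\|_{L^\infty}$, which is bounded by $\sup_t\|\psi(t)\|_{H^2}<\infty$ via Remark \ref{wp_stoc} and the Sobolev embedding. It also depends on the moments of $\chi$ and $u^\varepsilon$ in $H^1$, which Lemma \ref{lem_H1b} bounds uniformly in $\varepsilon$.

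\textbf{Fluctuation discretization term.} Theorem \ref{conv_thm} gives $\max_n\|\chi(t_n)-\chi_n\|_{L^2(\Omega;L^2)}\lesssim\tau^{\min(1/2,\beta-1/2)}$. Multiplying by $\varepsilon$ yields the last term.

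Taking the maximum over $n$ and adding the three bounds gives the claim. The implicit constant is independent of $\varepsilon$ and $\tau$, because the constant in Theorem \ref{conv_thm} does not involve $\varepsilon$: the linearized problem \eqref{lin_eq} and the scheme \eqref{met} are both $\varepsilon$-free.

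The only point requiring care is this uniformity of constants, in particular that the $\sup_{\varepsilon}$ in Proposition \ref{prop_wp_lin} is finite. That finiteness is exactly what Lemma \ref{lem_H1b} provides, so no real obstacle is expected.
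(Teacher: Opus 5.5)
Your proposal is correct and follows the paper's proof essentially verbatim: the same three-term triangle-inequality split based on $u=\psi+\varepsilon u^\varepsilon$ and $u_n=\psi_n+\varepsilon\chi_n$, followed by Assumption~\ref{ass_det_conv}, Proposition~\ref{prop_wp_lin} (with $p=1$), and Theorem~\ref{conv_thm}. Your additional check that all constants are uniform in $\varepsilon$ and $\tau$ (via Remark~\ref{wp_stoc}, the one-dimensional Sobolev embedding $H^1\hookrightarrow L^\infty$ applied to $\partial_x\psi$, and Lemma~\ref{lem_H1b}) is a welcome detail that the paper leaves implicit.
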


\begin{proof}
The triangle inequality gives
\begin{align*}
\|u(t_n)-u_n\|_{L^2(\Omega;L^2)}
&\le
\|\psi(t_n)-\psi_n\|
+\varepsilon \|u^\varepsilon(t_n)-\chi_n\|_{L^2(\Omega;L^2)}\\
&\le
\|\psi(t_n)-\psi_n\|
+\varepsilon \|u^\varepsilon(t_n)-\chi(t_n)\|_{L^2(\Omega;L^2)}+\varepsilon \|\chi(t_n)-\chi_n\|_{L^2(\Omega;L^2)}.
\end{align*}
The conclusion follows directly from Assumption \ref{ass_det_conv}, Proposition \ref{prop_wp_lin}, and Theorem \ref{conv_thm}.
\end{proof}

\begin{remark}
As a consequence of Theorem \ref{conv_thm}, convergence of order one-half is achieved for the linearized equation provided that
$$
\xi\in H^2,\qquad
Q^{1/2}\in\mathcal{L}_2^1,
$$
and provided that the numerical integrator for the deterministic KdV equation converges with order one in $L^2$ and satisfies the summability condition
$$
\sum_{\ell\in\mathbbm Z}\big|\hat \psi_{n,\ell}\big|\,|\ell|<\infty.
$$
Here we use the obvious notation
$$
\psi_n
=
\frac{1}{\sqrt{2\pi}}
\sum_{\ell\in\mathbbm Z}
(\widehat{\psi_n})_{\ell}\,{\rm e}^{i\ell x},
\qquad
\psi_n^{(1)}
=
\frac{1}{\sqrt{2\pi}}
\sum_{\ell\in\mathbbm Z}
\big|(\widehat{\psi_n})_{\ell}\big|\,|\ell|\,{\rm e}^{i\ell x}.
$$
\end{remark}

\subsection{First-order convergence under $H^2$-regularity}

The convergence order obtained in Theorem \ref{conv_thm} can be improved if additional regularity is assumed for the exact solutions $\chi$ and $\psi$, as well as for the deterministic numerical solution $\psi_n$. The key ingredient is to exploit conditional expectation when estimating the terms $I_5$ and $I_6$ in the proof of Theorem \ref{conv_thm}. 
%\textcolor{red}{In the low-regularity argument, these cross terms are controlled only by Cauchy--Schwarz and Young's inequalities, which leads to a half-order loss. Under higher regularity, the oscillatory structure of the resonance-based integrator can be used more sharply, and the half-order loss can be removed.}

We start by providing the following regularity result, that will be used for obtaining the improved estimate. Its proof follows from arguments similar to those used for Lemma~\ref{lem_H1b}, and is therefore omitted.
\begin{lemma}
\label{reg_plus}
Assume that $Q^{1/2}\in\mathcal{L}_2^2$, and $\xi\in H^3$. Then, $\chi\in\mathcal{L}_T^{\infty,p,2}$ for every $p\in\mathbbm N^+$.
\end{lemma}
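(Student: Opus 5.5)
The plan is to mirror the proof of Lemma~\ref{lem_H1b}, one derivative higher. Under $\xi\in H^3$, Remark~\ref{wp_stoc} gives $\psi\in C([0,T];H^3)$. Hence $\partial_x^j\psi\in L^\infty$ uniformly on $[0,T]$ for $j\le 2$, by \eqref{hom_b2} or the Sobolev embedding.

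\emph{Step 1: splitting off the noise.} Write $\chi=Z+v$, where
\[
Z(t)=\int_0^t {\rm e}^{-(t-s)\partial_x^3}Q^{1/2}\,dW(s)
\]
is the stochastic convolution. Since the Airy group is an isometry and $Q^{1/2}\in\mathcal L_2^2$, the Burkholder--Davis--Gundy inequality, together with the factorization method or the unitary-group maximal inequality, gives $\mathbbm E\sup_t\|Z(t)\|_{H^2}^{2p}<\infty$. The remainder $v$ solves the random PDE
\[
\partial_t v=-\partial_x^3 v+2\mu\partial_x\bigl(\psi(v+Z)\bigr),\qquad v(0)=0 .
\]
Alternatively, I would apply It\^o's formula directly to $\|\partial_x^2\chi\|^{2p}$ on Galerkin approximations, as the paper allows.

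\emph{Step 2: the $\dot H^2$ energy estimate.} Take the $L^2$ pairing of $\partial_x^2$ of the equation with $\partial_x^2\chi$. The dispersive term vanishes by skew-adjointness. Expanding $\partial_x^3(\psi\chi)$ by Leibniz, the only term carrying three derivatives of $\chi$ is $\langle\psi\,\partial_x^3\chi,\partial_x^2\chi\rangle$. Integrating by parts turns it into $-\tfrac12\langle\partial_x\psi,(\partial_x^2\chi)^2\rangle$, which is bounded by $\|\partial_x\psi\|_{L^\infty}\|\chi\|_{\dot H^2}^2$. The remaining terms are
\[
\langle \partial_x^k\psi\,\partial_x^{3-k}\chi,\partial_x^2\chi\rangle,\qquad k=1,2,3 .
\]
For $k=1,2$ I use $\|\partial_x^k\psi\|_{L^\infty}$ and Cauchy--Schwarz. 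For $k=3$ I bound the term by $\|\partial_x^3\psi\|\,\|\chi\|_{L^\infty}\|\chi\|_{\dot H^2}\lesssim\|\psi\|_{H^3}\|\chi\|_{H^1}\|\chi\|_{\dot H^2}$, using \eqref{gn1}. The It\^o correction is $\|Q^{1/2}\|_{\mathcal L_2^2}^2$ in the quadratic case, and for the $2p$-th power it is $\lesssim\|\chi\|_{\dot H^2}^{2p-2}\|Q^{1/2}\|^2_{\mathcal L_2^2}$. The martingale term has zero expectation, or is handled by BDG if a supremum is wanted. Thus
\[
\frac{d}{dt}\,\mathbbm E\|\chi\|_{\dot H^2}^{2p}\le C\,\mathbbm E\|\chi\|_{\dot H^2}^{2p}+C\,\mathbbm E\|\chi\|_{H^1}^{2p}+C,
\]
with $C$ depending on $\sup_t\|\psi(t)\|_{H^3}$, $\|Q^{1/2}\|_{\mathcal L_2^2}$ and $p$. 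Here Young's inequality has been applied.

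\emph{Step 3: Gronwall.} Lemma~\ref{lem_H1b} controls $\mathbbm E\|\chi\|_{H^1}^{2p}$ uniformly in $t$. Gronwall's inequality then gives $\sup_{t\le T}\mathbbm E\|\chi(t)\|_{H^2}^{2p}<\infty$, which is $\chi\in\mathcal L_T^{\infty,2p,2}$. Since $\mathcal L_T^{\infty,p,2}\supset\mathcal L_T^{\infty,2p,2}$, every $p$ is covered.

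\emph{Main obstacle.} The delicate point is justifying It\^o's formula at the $H^2$ level. I would do this on spectral Galerkin truncations $P_N$, where the integration-by-parts cancellation of the top-order term must survive the projection. I expect it to survive, because $P_N$ commutes with $\partial_x$. What does need care is the commutator $[P_N,\psi]\partial_x^3\chi_N$. To avoid this commutator, I would instead run the estimate on the linear random PDE for $v$ with mollified data and then pass to the limit using the uniform bounds.
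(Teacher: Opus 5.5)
Your Steps 2--3 are correct, and they follow the route the paper indicates. The paper omits this proof and says it mirrors Lemma~\ref{lem_H1b}, which is what you do: It\^o's formula for $\|\partial_x^2\chi\|^{2p}$, the top-order cancellation $\langle\psi\,\partial_x^3\chi,\partial_x^2\chi\rangle=-\tfrac12\langle\partial_x\psi,(\partial_x^2\chi)^2\rangle$, an It\^o correction controlled by $\|Q^{1/2}\|_{\mathcal L_2^2}^2$, and Gronwall using the $H^1$ moments from Lemma~\ref{lem_H1b}.

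Your last paragraph needs correcting, because it discards a working justification in favour of one that fails.

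First, Galerkin applied to $\chi$ itself has no commutator issue. The spectral projection $P_N$ is an orthogonal projection that commutes with $\partial_x$, so
\[
\bigl\langle \partial_x^2P_N\partial_x(\psi\chi_N),\partial_x^2\chi_N\bigr\rangle=\bigl\langle \partial_x^3(\psi\chi_N),P_N\partial_x^2\chi_N\bigr\rangle=\bigl\langle \partial_x^3(\psi\chi_N),\partial_x^2\chi_N\bigr\rangle .
\]
Hence Step 2 goes through verbatim with $N$-independent constants. You then pass to the limit by lower semicontinuity and Fatou's lemma.

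Second, the fallback you settle on would not give uniform bounds. The $\dot H^2$ energy identity for $v=\chi-Z$ contains the term $2\mu\langle\psi\,\partial_x^3 Z,\partial_x^2 v\rangle$. With only $Q^{1/2}\in\mathcal L_2^2$, the stochastic convolution $Z$ lies in $H^2$ but not $H^3$, since the Airy group is unitary and does not smooth. So this term loses a derivative, and any mollified bound blows up as the mollifier is removed. The top-order cancellation is available only for the full $\chi$, where the noise enters through the It\^o correction rather than through $\partial_x^3 Z$.
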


%\rev{We state the following lemma without proof, since it follows from arguments similar to those used in the proof of Lemma \ref{lem_H1b}.}

The first refined estimate concerns the term $I_5$.

\begin{lemma}
\label{lem0}
Assume the hypotheses of Lemma \ref{reg_plus}. Moreover, suppose that the deterministic numerical solution satisfies the summability condition $\psi_n^{(1)} \in L^\infty$ uniformly in $n$. Then the term $I_5$ in the proof of Theorem \ref{conv_thm} satisfies the improved bound
$$
I_5\lesssim \big\|\tilde \chi(t_n)-\tilde \chi_n\big\|_{L^2(\Omega;L^2)}\,\tau^2.
$$
\end{lemma}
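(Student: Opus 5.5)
We recall $I_5=2\mathbbm E\langle A_n,C_n\rangle$, where
$C_n=\Phi_{t_n}^\tau(\tilde\chi(t_n),\tilde\psi_n)-\Phi_{t_n}^\tau(\tilde\chi_n,\tilde\psi_n)$ and $A_n$ is the local error. Since the stochastic convolution enters $\Phi$ additively, it cancels in $C_n$. Hence $C_n=e_n+\mathcal R_n$, where $\mathcal R_n$ denotes the two bilinear terms of \eqref{num_map} evaluated at $(e_n,\tilde\psi_n)$. Both $e_n$ and $\mathcal R_n$ are $\mathcal F_{t_n}$-measurable. We therefore split
$$
I_5=2\mathbbm E\langle \mathbbm E[A_n\,|\,\mathcal F_{t_n}],e_n\rangle+2\mathbbm E\langle A_n,\mathcal R_n\rangle .
$$
For the second term we use the bounds from the proof of Lemma \ref{stab_lem} (the $J_1$, $J_2$ estimates), which give $\|\mathcal R_n\|\lesssim \tau^{1/2}\|e_n\|$ with a constant depending on $\|\psi_n^{(1)}\|_{L^\infty}$. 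Better still, the Fourier representation with the factor $({\rm e}^{-3i\tau\ell_1\ell_2(\ell_1+\ell_2)}-1)/(\ell_1\ell_2)$ bounded by $\tau|\ell_1+\ell_2|$ yields $\|\mathcal R_n\|\lesssim\tau\|\psi_n^{(1)}\|_{L^\infty}\|e_n\|$ after pairing. Combined with Lemma \ref{Loc_err_lem}, this gives a contribution $\lesssim \tau^{5/2}\|e_n\|_{L^2(\Omega;L^2)}$, which is acceptable.

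The key step is to show $\|\mathbbm E[A_n|\mathcal F_{t_n}]\|_{L^2(\Omega;L^2)}\lesssim\tau^2$. We write $A_n=2\mu\int_0^\tau {\rm e}^{(t_n+s)\partial_x^3}\partial_x(\cdots)\,ds$ using the $L_1+L_2$ splitting. For the $\psi$-part we expand $\tilde\psi(t_n+s)-\tilde\psi(t_n)$. For the $\chi$-part we use the twisted mild equation \eqref{mild_sol_twis_stoc}:
$$
\tilde\chi(t_n+s)-\tilde\chi(t_n)=2\mu\int_0^s{\rm e}^{(t_n+r)\partial_x^3}\partial_x(\cdots)\,dr+\int_{t_n}^{t_n+s}{\rm e}^{r\partial_x^3}Q^{1/2}dW(r).
$$
The stochastic integral has zero conditional expectation given $\mathcal F_{t_n}$. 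Since it enters the local error linearly, with deterministic coefficient $\tilde\psi(t_n)$, it drops out of $\mathbbm E[A_n|\mathcal F_{t_n}]$. What remains is a double time integral of drift terms, of size $\int_0^\tau\int_0^s\,dr\,ds\sim\tau^2$, provided each integrand is bounded in $L^2$ after the outer $\partial_x$.

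This bound on the integrands is where Lemma \ref{reg_plus} enters. We have $\chi\in\mathcal L_T^{\infty,p,2}$, $\psi\in C([0,T];H^3)$ by Remark \ref{wp_stoc}, and $\psi_t\in H^0$. The inner drift $\partial_x(\psi\chi)$ lies in $H^1$, so the outer product with $\psi$ followed by $\partial_x$ stays in $L^2$ by the algebra property \eqref{bil_est}. Similarly, $\tilde\psi(t_n+s)-\tilde\psi(t_n)=O(s)$ in $H^1$ when $\xi\in H^3$, since $\partial_t\tilde\psi={\rm e}^{t\partial_x^3}\mu\partial_x(\psi^2)$, which lies in $H^1$ when $\psi\in H^2$. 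This $\psi$-part multiplies $\tilde\chi(t_n+s)$. Here we split $\tilde\chi(t_n+s)=\tilde\chi(t_n)+(\text{increment})$. The first piece is deterministic given $\mathcal F_{t_n}$ and gives $O(\tau^2)$. The second gives $\int_0^\tau O(s)\cdot O(\sqrt s)\,ds=O(\tau^{5/2})$ via Lemma \ref{time_reg_est}.

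Finally, Cauchy--Schwarz gives $|I_5|\lesssim \tau^2\|e_n\|_{L^2(\Omega;L^2)}+\tau^{5/2}\|e_n\|_{L^2(\Omega;L^2)}\lesssim\tau^2\|e_n\|_{L^2(\Omega;L^2)}$. We expect the main obstacle to be the derivative count in the deterministic drift part of $\mathbbm E[A_n|\mathcal F_{t_n}]$: we must place one $\partial_x$ on a product involving $\partial_x(\psi\chi)$ while keeping everything in $L^2$ using only $H^2$ for $\chi$. We also must verify carefully that the noise term is genuinely linear, and hence mean zero, inside $A_n$.
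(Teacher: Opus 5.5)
Your proposal is correct and follows the paper's proof essentially step by step. It uses the same split $I_5=I_5^1+I_5^2$ via $C_n=e_n+\mathcal R_n$, and the same $\tau^{3/2}\cdot\tau^{1/2}$ bound for the $\mathcal R_n$ part coming from the $J_1,J_2$ estimates. For $\mathbbm E[A_n\,|\,\mathcal F_{t_n}]$ it likewise combines the $O(s)$ bound on $\tilde\psi(t_n+s)-\tilde\psi(t_n)$ in $H^1$ with the mild form of the $\tilde\chi$-increment, whose It\^o integral drops out against the $\mathcal F_{t_n}$-measurable coefficient $\tilde\psi(t_n)$, leaving a $\tau^2$ double drift integral controlled by $\chi\in\mathcal L_T^{\infty,p,2}$.

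One small caveat: your ``better still'' bound $\|\mathcal R_n\|\lesssim\tau\|\psi_n^{(1)}\|_{L^\infty}\|e_n\|$ does not hold as an $L^2$ estimate. The multiplier bound $\tau|\ell_1+\ell_2|$ puts a full derivative on $e_n$, which is why the paper only obtains $\tau^{1/2}$, by moving that derivative onto the $\partial_x^{-1}$-smoothed factors in the pairings $J_1,J_2$. You do not need it, though, since $\tau^{3/2}\cdot\tau^{1/2}=\tau^2$ already suffices.
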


\begin{proof} By standard properties of conditional expectation,
\begin{align*}
I_5
&=
2\mathbbm{E}\Big[
\mathbbm{E}\Big[
\Big\langle
\varphi_{t_n}^\tau(\tilde \chi(t_n))-\Phi_{t_n}^\tau(\tilde \chi(t_n),\tilde\psi(t_n)),
\Phi_{t_n}^\tau(\tilde \chi(t_n),\tilde\psi_n)-\Phi_{t_n}^\tau(\tilde \chi_n,\tilde\psi_n)
\Big\rangle
\Bigm|
\mathcal F_{t_n}
\Big]
\Big].
\end{align*}
Denoting
$
\Psi_t^\tau(v,z):=\Phi_t^\tau(v,z)-v,
$
we decompose $I_5=I_5^1+I_5^2$, where
\begin{align*}
I_5^1
&=
2\mathbbm{E}\Big[
\mathbbm{E}\Big[
\Big\langle
\varphi_{t_n}^\tau(\tilde \chi(t_n))-\Phi_{t_n}^\tau(\tilde \chi(t_n),\tilde\psi(t_n)),
\tilde \chi(t_n)-\tilde \chi_n
\Big\rangle
\Bigm|
\mathcal F_{t_n}
\Big]
\Big],\\
I_5^2
&=
2\mathbbm{E}\Big[
\mathbbm{E}\Big[
\Big\langle
\varphi_{t_n}^\tau(\tilde \chi(t_n))-\Phi_{t_n}^\tau(\tilde \chi(t_n),\tilde\psi(t_n)),
\Psi_{t_n}^\tau(\tilde \chi(t_n),\tilde\psi_n)-\Psi_{t_n}^\tau(\tilde \chi_n,\tilde\psi_n)
\Big\rangle
\Bigm|
\mathcal F_{t_n}
\Big]
\Big].
\end{align*}
The term $I_5^2$ is estimated directly by the Cauchy--Schwarz inequality, Lemma \ref{Loc_err_lem}, and the same argument as in the estimation of terms $J_1$ and $J_2$ in the proof of Lemma \ref{stab_lem}. Indeed, we deduce
\begin{align}
I_5^2
&\lesssim
\Big(
\mathbbm{E}\|\varphi_{t_n}^\tau(\tilde \chi(t_n))-\Phi_{t_n}^\tau(\tilde \chi(t_n),\tilde\psi(t_n))\|^2
\Big)^{1/2}\times
\Big(
\mathbbm{E}\|\Psi_{t_n}^\tau(\tilde \chi(t_n),\tilde\psi_n)-\Psi_{t_n}^\tau(\tilde \chi_n,\tilde\psi_n)\|^2
\Big)^{1/2}
\notag\\
&\lesssim
\tau^{3/2}\cdot \tau^{1/2}
\big(\mathbbm{E}\|\tilde \chi(t_n)-\tilde \chi_n\|^2\big)^{1/2}
\label{T2}\\
&\lesssim
\|\tilde \chi(t_n)-\tilde \chi_n\|_{L^2(\Omega;L^2)}\,\tau^2.
\notag
\end{align}

It remains to estimate $I_5^1$. Comparing \eqref{mild_sol_twis_stoc} and \eqref{num_map}, and using the martingale property of the stochastic It\^o integral, we obtain
\begin{align*}
&\Big\|
\mathbbm{E}
\Big[
\varphi_{t_n}^\tau(\tilde \chi(t_n))-\Phi_{t_n}^\tau(\tilde \chi(t_n),\tilde\psi(t_n))
\Bigm|
\mathcal F_{t_n}
\Big]
\Big\|\\
&\qquad=
2\mu
\Bigg\|
\mathbbm{E}
\Bigg[
\int_0^\tau
{\rm e}^{(t_n+s)\partial_x^3}\partial_x
\Big(
{\rm e}^{-(t_n+s)\partial_x^3}\tilde\psi(t_n+s)\,
{\rm e}^{-(t_n+s)\partial_x^3}\tilde \chi(t_n+s)\\
&\hspace{4.8cm}
-
{\rm e}^{-(t_n+s)\partial_x^3}\tilde\psi(t_n)\,
{\rm e}^{-(t_n+s)\partial_x^3}\tilde \chi(t_n)
\Big)\,ds
\Bigm|
\mathcal F_{t_n}
\Bigg]
\Bigg\|\\
&\qquad\lesssim I_5^{1a}+I_5^{1b},
\end{align*}
where
\begin{align*}
I_5^{1a}
&=
\Bigg\|
\mathbbm{E}
\Bigg[
\int_0^\tau
{\rm e}^{(t_n+s)\partial_x^3}\partial_x
\Big(
{\rm e}^{-(t_n+s)\partial_x^3}
\big(\tilde\psi(t_n+s)-\tilde\psi(t_n)\big)\,
{\rm e}^{-(t_n+s)\partial_x^3}\tilde \chi(t_n+s)
\Big)\,ds
\Bigm|
\mathcal F_{t_n}
\Bigg]
\Bigg\|,\\
I_5^{1b}
&=
\Bigg\|
\mathbbm{E}
\Bigg[
\int_0^\tau
{\rm e}^{(t_n+s)\partial_x^3}\partial_x
\Big(
{\rm e}^{-(t_n+s)\partial_x^3}\tilde\psi(t_n)\,
{\rm e}^{-(t_n+s)\partial_x^3}
\big(\tilde \chi(t_n+s)-\tilde \chi(t_n)\big)
\Big)\,ds
\Bigm|
\mathcal F_{t_n}
\Bigg]
\Bigg\|.
\end{align*}
Since both $\tilde \chi_n$ and $\tilde \chi(t_n)$ are $\mathcal F_{t_n}$-measurable, the above decomposition and Cauchy--Schwarz inequality imply
$$
I_5^1
\lesssim
\mathbbm{E}\big[\|\tilde \chi(t_n)-\tilde \chi_n\|\,(I_5^{1a}+I_5^{1b})\big].
$$

For the term $I_5^{1a}$, using the bilinear estimate \eqref{bil_est_hom}, and the result from \cite{kat0} 
$$
\|\tilde \psi(t_n+s)-\tilde \psi(t_n)\|_{\dot{H}^1}\lesssim s,
$$
we have
\begin{align}
I_5^{1a}
&\le
\mathbbm{E}\Bigg[
\int_0^\tau
\Big\|
{\rm e}^{-(t_n+s)\partial_x^3}
\big(\tilde\psi(t_n+s)-\tilde\psi(t_n)\big)\,
{\rm e}^{-(t_n+s)\partial_x^3}\tilde \chi(t_n+s)
\Big\|_{\dot{H}^1}\,ds
\Bigm|
\mathcal F_{t_n}
\Bigg]
\notag\\
&\lesssim
\mathbbm{E}\Bigg[
\int_0^\tau
\|\tilde\psi(t_n+s)-\tilde\psi(t_n)\|_{\dot{H}^1}\,
\|\tilde \chi(t_n+s)\|_{\dot{H}^1}\,ds
\Bigm|
\mathcal F_{t_n}
\Bigg]
\label{Gam0}\\
&\lesssim
\mathbbm{E}\Bigg[
\int_0^\tau
s\,\|\tilde \chi(t_n+s)\|_{\dot{H}^1}\,ds
\Bigm|
\mathcal F_{t_n}
\Bigg].
\notag
\end{align}
Using \eqref{Gam0}, together with repeated applications of H\"older's inequality, we get
\begin{align*}
\mathbbm{E}\big[\|\tilde \chi(t_n)-\tilde \chi_n\|\,I_5^{1a}\big]
&\lesssim
\mathbbm{E}\Bigg[
\|\tilde \chi(t_n)-\tilde \chi_n\|\,
\mathbbm{E}\Big[
\int_0^\tau s\|\tilde \chi(t_n+s)\|_{\dot{H}^1}\,ds
\Bigm|
\mathcal F_{t_n}
\Big]
\Bigg]\\
&\lesssim
\tau
\big(\mathbbm{E}\|\tilde \chi(t_n)-\tilde \chi_n\|^2\big)^{1/2}
\Bigg(
\mathbbm{E}
\Big|
\mathbbm{E}\Big[
\int_0^\tau \|\tilde \chi(t_n+s)\|_{\dot{H}^1}\,ds
\Bigm|
\mathcal F_{t_n}
\Big]
\Big|^2
\Bigg)^{1/2}\\
&\lesssim
\tau
\big(\mathbbm{E}\|\tilde \chi(t_n)-\tilde \chi_n\|^2\big)^{1/2}
\Bigg(
\mathbbm{E}\Big[
\mathbbm{E}\Big[
\Big(\int_0^\tau \|\tilde \chi(t_n+s)\|_{\dot{H}^1}\,ds\Big)^2
\Bigm|
\mathcal F_{t_n}
\Big]
\Big]
\Bigg)^{1/2},
\end{align*}
H\"older's inequality and Lemma \ref{reg_plus} yield
\begin{align*}
\mathbbm{E}\big[\|\tilde \chi(t_n)-\tilde \chi_n\|\,I_5^{1a}\big]&\lesssim
\tau^{3/2}
\big(\mathbbm{E}\|\tilde \chi(t_n)-\tilde \chi_n\|^2\big)^{1/2}
\Big(
\mathbbm{E}\int_0^\tau \|\tilde \chi(t_n+s)\|_{\dot{H}^1}^2\,ds
\Big)^{1/2}\\
&\lesssim
\tau^2 \big\|\tilde \chi(t_n)-\tilde \chi_n\big\|_{L^2(\Omega; L^2)}.
\end{align*}

For the term $I_5^{1b}$, we use the mild form \eqref{mild_sol_twis_stoc} for $\tilde \chi$ and again exploit the martingale property of the stochastic It\^o integral. This gives
\begin{align*}
I_5^{1b}
&=
2\mu
\Bigg\|
\mathbbm{E}
\Bigg[
\int_0^\tau
{\rm e}^{(t_n+s)\partial_x^3}\partial_x
\Big(
{\rm e}^{-(t_n+s)\partial_x^3}\tilde\psi(t_n)\,
{\rm e}^{-(t_n+s)\partial_x^3}\\
&\qquad\qquad\qquad\qquad\times
\int_0^s
2\mu\,{\rm e}^{(t_n+r)\partial_x^3}\partial_x
\big(
{\rm e}^{-(t_n+r)\partial_x^3}\tilde\psi(t_n+r)\,
{\rm e}^{-(t_n+r)\partial_x^3}\tilde \chi(t_n+r)
\big)\,dr
\Big)\,ds
\Bigm|
\mathcal F_{t_n}
\Bigg]
\Bigg\|\\
 &\lesssim \mathbbm{E}\Big[\int_{0}^{\tau} \big\|{\rm e}^{- (t_n+s)\partial_x^3} \tilde\psi(t_n){\rm e}^{-(t_n+s)\partial_x^3}\\
    &\qquad \qquad \times \int_{0}^{s}{\rm e}^{(t_n+r)\partial_x^3} \partial_x \big({\rm e}^{-(t_n+r)\partial_x^3 }\tilde\psi(t_n+r){\rm e}^{-(t_n+r)\partial_x^3 }\tilde{\chi}(t_n+r)\big)\,dr \big\|_{\dot{H}^1}\,ds \big|\mathcal{F}_{t_n}\Big].
\end{align*}
Using the bilinear estimate \eqref{bil_est_hom} and noting that by our assumptions $\tilde\psi\in\mathbb{L}_T^{\infty,3}$, we deduce
\begin{align}
I_5^{1b}
&\lesssim
\mathbbm{E}\Bigg[
\int_0^\tau\int_0^s
\Big\|
{\rm e}^{-(t_n+r)\partial_x^3}\tilde\psi(t_n+r)\,
{\rm e}^{-(t_n+r)\partial_x^3}\tilde \chi(t_n+r)
\Big\|_{\dot{H}^2}\,dr\,ds
\Bigm|
\mathcal F_{t_n}
\Bigg]
\notag\\
&\lesssim
\mathbbm{E}\Bigg[
\int_0^\tau\int_0^s
\|\tilde\psi(t_n+r)\|_{H^2}\,
\|\tilde \chi(t_n+r)\|_{\dot{H}^2}\,dr\,ds
\Bigm|
\mathcal F_{t_n}
\Bigg]
\notag\\
&\lesssim
\mathbbm{E}\Bigg[
\int_0^\tau\int_0^s
\|\tilde \chi(t_n+r)\|_{\dot{H}^2}\,dr\,ds
\Bigm|
\mathcal F_{t_n}
\Bigg].
\label{Gam1}
\end{align}
Furthermore, arguing exactly as above, using the fact that, by Lemma \ref{reg_plus}, $\tilde\chi \in \mathcal{L}_T^{\infty,p,2}$, for every $p\in\mathbbm{N}^+$, we obtain
\begin{align}
&\quad\mathbbm{E}\Bigg[
\Big|
\mathbbm{E}\Big[
\int_0^\tau\int_0^s \|\tilde \chi(t_n+r)\|_{\dot{H}^2}\,dr\,ds
\Bigm|
\mathcal F_{t_n}
\Big]
\Big|^2
\Bigg]
\le
\mathbbm{E}\Bigg[
\Big(
\int_0^\tau\int_0^s \|\tilde \chi(t_n+r)\|_{\dot{H}^2}\,dr\,ds
\Big)^2
\Bigg]
\notag\\
&\lesssim
\tau^2
\int_0^\tau\int_0^s
\mathbbm{E}\|\tilde \chi(t_n+r)\|_{\dot{H}^2}^2\,dr\,ds
\lesssim \tau^4.
\label{Gam2}
\end{align}
Therefore, by Cauchy--Schwarz together with \eqref{Gam1} and \eqref{Gam2},
$$
\mathbbm{E}\big[\|\tilde \chi(t_n)-\tilde \chi_n\|\,I_5^{1b}\big]
\lesssim
\big\|\tilde \chi(t_n)-\tilde \chi_n\big\|_{L^2(\Omega; L^2)}\tau^2.
$$
Combining the above bounds for $I_5^1$ and $I_5^2$ completes the proof.
\end{proof}

To obtain first-order convergence for $\chi_n$, we impose the following stronger assumption on the deterministic solver $\psi_n$.

\begin{ass}
\label{ass_det_conv2}
Assume that there exist constants $\alpha,\sigma\ge 1$, a constant $C_{\alpha,\sigma}$ depending on $\|\psi\|_{\mathbbm{L}_T^{\infty,\alpha}}$ and on $\|\psi_n\|_{H^\sigma}$, and a sufficiently small $\tau_0>0$ such that
$$
\max_n \|\psi_n-\psi(t_n)\|_{H^1}\lesssim C_{\alpha,\sigma}\tau,
$$
for all $\tau\in(0,\tau_0)$.
\end{ass}

Assumption \eqref{ass_det_conv2} requires first-order of convergence in $H^1$ for the deterministic method $\psi_n$. In \cite{kat0}, the authors showed the first-order of convergence for method \eqref{kat_ex0} in $H^1$ for initial data in $H^3$ and the one-half order of convergence under for initial data in $H^2$. In particular, this implies that the numerical solution is uniformly bounded in $H^2$ for initial data in $H^2$. Then, Assumption \ref{ass_det_conv2} holds true with $\alpha=3$ and $\sigma=2$

We now state and prove the improved convergence result.

\begin{theorem}
\label{conv_thm2}
Assume the setting of Lemma \ref{lem0} and Assumption \ref{ass_det_conv2}. Then the numerical solution $\chi_n$ satisfies
$$
\max_n \|\chi(t_n)-\chi_n\|_{L^2(\Omega;L^2)}
\lesssim \tau,
$$
for all $\tau\in(0,\tau_0)$.
\end{theorem}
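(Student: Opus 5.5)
We reuse the Lady Windermere's fan decomposition from the proof of Theorem \ref{conv_thm}: with $e_n=\tilde\chi(t_n)-\tilde\chi_n$ and $e_{n+1}=A_n+B_n+C_n$, we again expand $\mathbbm{E}\|e_{n+1}\|^2=I_1+\dots+I_6$. The goal is a recursion of the form
\[
\mathbbm{E}\|e_{n+1}\|^2\le (1+c\tau)\mathbbm{E}\|e_n\|^2+c\tau^3 ,
\]
which by the discrete Gronwall lemma gives $\max_n\mathbbm{E}\|e_n\|^2\lesssim\tau^2$. Since the Airy group is an $L^2$-isometry, the bound for $\chi_n$ follows.

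The terms that already have the right size are handled as before. Lemma \ref{Loc_err_lem} gives $I_1\lesssim\tau^3$. Lemma \ref{stab_lem}, using the uniform bound on $\psi_n^{(1)}$, gives $I_3\le(1+c\tau)\mathbbm{E}\|e_n\|^2$. For $I_2$ we combine Lemma \ref{det_conv_lem} with Assumption \ref{ass_det_conv2}. The $L^2$ bound $\|\tilde\psi(t_n)-\tilde\psi_n\|\lesssim\tau$ is implied by the $H^1$ bound, so $I_2\lesssim\tau^3\,\mathbbm{E}\|\tilde\chi(t_n)\|_{H^1}^2\lesssim\tau^3$. Then $|I_4|\le I_1+I_2\lesssim\tau^3$. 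For $I_5$ we use Lemma \ref{lem0} and Young's inequality:
\[
|I_5|\lesssim\tau^2\|e_n\|_{L^2(\Omega;L^2)}\le \tau\,\mathbbm{E}\|e_n\|^2+\tau^3 .
\]

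The main obstacle is $I_6=2\mathbbm{E}\langle B_n,C_n\rangle$. The crude bound in Theorem \ref{conv_thm} only gave $\tau^{2\beta}=\tau^2$, which costs half an order. We write $C_n=e_n+\big(\Psi_{t_n}^\tau(\tilde\chi(t_n),\tilde\psi_n)-\Psi_{t_n}^\tau(\tilde\chi_n,\tilde\psi_n)\big)$, exactly as in the proof of Lemma \ref{lem0}.

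For the $\Psi$-part we argue as in the estimates of $J_1,J_2$ in Lemma \ref{stab_lem}. There the difference of the bilinear terms carries a factor $\tau$ from $|{\rm e}^{-3i\tau\ell_1\ell_2(\ell_1+\ell_2)}-1|$, which gives an $L^2$ bound $\lesssim\sqrt\tau\,\|e_n\|$ (at least $\tau^{1/2}$, as used in \eqref{T2}). Cauchy--Schwarz with $\|B_n\|_{L^2(\Omega;L^2)}\lesssim\tau^{3/2}$ then yields $\tau^2\|e_n\|_{L^2(\Omega;L^2)}$, which is handled by Young's inequality as above.

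For the part $2\mathbbm{E}\langle B_n,e_n\rangle$ we need a better bound on $B_n$ itself than Lemma \ref{det_conv_lem} provides. Note that $B_n$ is $\mathcal F_{t_n}$-measurable, because the noise terms cancel in the difference. So we estimate $\|B_n\|$ directly in $L^2$. Writing $B_n$ as the integral $2\mu\int_0^\tau{\rm e}^{(t_n+s)\partial_x^3}\partial_x(\cdots)\,ds$ and bounding the $\dot H^1$-norm of the product via \eqref{bil_est_hom} gives
\[
\|B_n\|\lesssim \tau\,\|\tilde\psi(t_n)-\tilde\psi_n\|_{\dot H^1}\,\|\tilde\chi(t_n)\|_{\dot H^1}.
\]
Here the homogeneous estimate applies because both factors have zero mean, by Assumption \ref{ass_zero_mode} and \eqref{zero_det}. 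This is exactly where the $H^1$ convergence of Assumption \ref{ass_det_conv2} is needed. Together with Lemma \ref{lem_H1b} and Lemma \ref{reg_plus} it gives $\|B_n\|_{L^2(\Omega;L^2)}\lesssim\tau^2$. Hence
\[
|2\mathbbm{E}\langle B_n,e_n\rangle|\lesssim\tau^2\|e_n\|_{L^2(\Omega;L^2)}\le\tau\,\mathbbm{E}\|e_n\|^2+\tau^3 .
\]
Incidentally, this sharper bound on $B_n$ also improves $I_2$ and $I_4$. Collecting all terms gives the recursion stated at the start, and Gronwall concludes the proof.
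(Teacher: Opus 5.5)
Your proof is correct. Its architecture matches the paper's: the same fan decomposition, Lemma~\ref{lem0} for $I_5$, and the same splitting of $I_6$ according to $C_n=e_n+(\Psi_{t_n}^\tau(\tilde\chi(t_n),\tilde\psi_n)-\Psi_{t_n}^\tau(\tilde\chi_n,\tilde\psi_n))$, with the $\Psi$-part treated exactly as in the paper.

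You depart from the paper on the term $2\mathbbm{E}\langle B_n,e_n\rangle$, which the paper calls the delicate part. The paper works with the closed, resonance-integrated form of $\Phi$ and expands $\langle B_n,e_n\rangle$ in Fourier space with $\partial_x^{-1}$ placed on $e_n$. It extracts $\tau$ from $|1-{\rm e}^{-3i\tau\ell_1\ell_2(\ell_1+\ell_2)}|$ and ends up with two pieces, bounded by $\tau\|e_n\|\,\|\psi(t_n)-\psi_n\|_{\dot H^1}\|\chi(t_n)\|_{L^\infty}$ and $\tau\|e_n\|\,\|\psi(t_n)-\psi_n\|\,\|\chi^{(1)}(t_n)\|_{L^\infty}$. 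The second bound relies on the $H^2$-moments of Lemma~\ref{reg_plus}.

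You instead use the unintegrated, frozen-coefficient form of $\Phi$, which agrees with the closed form because all arguments have zero mean. You then bound $B_n$ itself by Minkowski's inequality, the Airy isometry and \eqref{bil_est_hom}, getting $\|B_n\|\lesssim\tau\|\psi(t_n)-\psi_n\|_{H^1}\|\chi(t_n)\|_{H^1}$. This gives $\|B_n\|_{L^2(\Omega;L^2)}\lesssim\tau^2$ using only Lemma~\ref{lem_H1b}; Lemma~\ref{reg_plus} is not needed at this point.

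One of the paper's two pieces needs $\|\psi(t_n)-\psi_n\|_{\dot H^1}$ anyway, so the resonance bookkeeping buys nothing under Assumption~\ref{ass_det_conv2}. Your estimate is shorter and less demanding on $\chi$. The final hypotheses are the same, because $H^2$-moments of $\chi$ are still needed for $I_5$ through Lemma~\ref{lem0}.

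Your sharper bound also makes the splitting of $C_n$ unnecessary. Cauchy--Schwarz and Lemma~\ref{stab_lem} give directly $|I_6|\le 2\|B_n\|_{L^2(\Omega;L^2)}\|C_n\|_{L^2(\Omega;L^2)}\lesssim\tau^2\|e_n\|_{L^2(\Omega;L^2)}$.
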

\begin{proof}
We first note that Assumption \ref{ass_det_conv2} implies Assumption \ref{ass_det_conv} with $\beta=1$. In view of Lemma \ref{lem0}, it remains only to improve the estimate of the term $I_6$ in the proof of Theorem \ref{conv_thm}.

Using the notation
$
\Psi_t^\tau(v,z):=\Phi_t^\tau(v,z)-v,
$
introduced in the proof of Lemma \ref{lem0}, we decompose
$
I_6=I_6^1+I_6^2,
$
where
\begin{align*}
I_6^1
&=
2\mathbbm{E}\Big\langle
\Phi_{t_n}^\tau(\tilde\chi(t_n),\tilde\psi(t_n))
-
\Phi_{t_n}^\tau(\tilde\chi(t_n),\tilde\psi_n),
\tilde\chi(t_n)-\tilde\chi_n
\Big\rangle,\\
I_6^2
&=
2\mathbbm{E}\Big\langle
\Phi_{t_n}^\tau(\tilde\chi(t_n),\tilde\psi(t_n))
-
\Phi_{t_n}^\tau(\tilde\chi(t_n),\tilde\psi_n),
\Psi_{t_n}^\tau(\tilde\chi(t_n),\tilde\psi_n)-\Psi_{t_n}^\tau(\tilde\chi_n,\tilde\psi_n)
\Big\rangle.
\end{align*}
We start with $I_6^2$, which is the easier part since it contains the increment of the numerical map $\Psi_t^\tau$ and can be controlled by the stability bounds.  The Cauchy--Schwarz inequality followed by H\"older's inequality gives
$$
I_6^2\lesssim \big\|\Phi_{t_n}^\tau(\tilde\chi(t_n),\tilde\psi(t_n))-\Phi_{t_n}^\tau(\tilde\chi(t_n),\tilde\psi_n)\big\|_{L^2(\Omega; L^2)} \big\|\Psi_{t_n}^\tau(\tilde\chi(t_n),\tilde\psi_n)-\Psi_{t_n}^\tau(\tilde\chi_n,\tilde\psi_n)\big\|_{L^2(\Omega; L^2)}.
$$
By Lemma \ref{det_conv_lem}, we have
$$
\big\|\Phi_{t_n}^\tau(\tilde\chi(t_n),\tilde\psi(t_n))-\Phi_{t_n}^\tau(\tilde\chi(t_n),\tilde\psi_n)\big\|_{L^2(\Omega; L^2)}\lesssim\sqrt{\tau}\big\| \tilde\psi(t_n)-\tilde\psi_n\big\| \big\|\tilde\chi^{(0)}(t_n)\big\|_{L^2(\Omega; L^\infty)}.
$$
We note that Assumption \ref{ass_det_conv2} also gives 
$$
\big\| \tilde\psi(t_n)-\tilde\psi_n\big\|\lesssim \tau
$$
and that one has
$$
\big\|\tilde\chi^{(0)}(t_n)\big\|_{L^2(\Omega; L^\infty)}\lesssim \big\|\tilde\chi \big\|_{\mathcal{L}_T^{\infty,p,2}}<\infty.
$$
Thus, we conclude
$$
\big\|\Phi_{t_n}^\tau(\tilde\chi(t_n),\tilde\psi(t_n))-\Phi_{t_n}^\tau(\tilde\chi(t_n),\tilde\psi_n)\big\|_{L^2(\Omega; L^2)}\lesssim \tau^{3/2}.
$$
Next, since by our assumptions $\tilde\psi_n^{(1)}\in L^\infty$, by the proof of Lemma \ref{stab_lem} we deduce
$$
\big\|\Psi_{t_n}^\tau(\tilde\chi(t_n),\tilde\psi_n)-\Psi_{t_n}^\tau(\tilde\chi_n,\tilde\psi_n)\big\|_{L^2(\Omega; L^2)}\lesssim \sqrt{\tau} \big\|\tilde\chi(t_n)-\tilde\chi_n\big\|_{L^2(\Omega; L^2)}.
$$
Therefore,
\begin{equation}
    \label{I62}
    I_6^2\lesssim \tau^2  \big\|\tilde\chi(t_n)-\tilde\chi_n\big\|_{L^2(\Omega; L^2)}.
\end{equation}

We next estimate $I_6^1$, which is the more delicate part. Here the resonance structure of the integrator is used explicitly through Fourier expansion.  Note that, twisting back the variables, we have
$$
\chi(t_n)={\rm e}^{-t_n\partial_x^3}\tilde \chi(t_n),\qquad
\chi_n={\rm e}^{-t_n\partial_x^3}\tilde \chi_n,
\qquad
\psi(t_n)={\rm e}^{-t_n\partial_x^3}\tilde\psi(t_n),\qquad
\psi_n={\rm e}^{-t_n\partial_x^3}\tilde\psi_n.
$$
By straightforward algebraic manipulations, we obtain
\begin{align*}
&\Big\langle
\Phi_{t_n}^\tau(\tilde \chi(t_n),\tilde\psi(t_n))
-
\Phi_{t_n}^\tau(\tilde \chi(t_n),\tilde\psi_n),
\tilde \chi(t_n)-\tilde \chi_n
\Big\rangle=
\frac{2\mu}{3}(A+B),
\end{align*}
where
\begin{align*}
A
&=
\Big\langle
{\rm e}^{\tau\partial_x^3}
\Big[
({\rm e}^{-\tau\partial_x^3}(\psi(t_n)-\psi_n))
({\rm e}^{-\tau\partial_x^3}\partial_x^{-1}\chi(t_n))
\Big]
-
(\psi(t_n)-\psi_n)\,\partial_x^{-1}\chi(t_n),
\partial_x^{-1}(\chi(t_n)-\chi_n)
\Big\rangle,\\
B
&=
\Big\langle
{\rm e}^{\tau\partial_x^3}
\Big[
({\rm e}^{-\tau\partial_x^3}\partial_x^{-1}(\psi(t_n)-\psi_n))
({\rm e}^{-\tau\partial_x^3}\chi(t_n))
\Big]
-
\partial_x^{-1}(\psi(t_n)-\psi_n)\,\chi(t_n),
\partial_x^{-1}(\chi(t_n)-\chi_n)
\Big\rangle.
\end{align*}
We first estimate $B$. Let
\begin{equation}
\label{tr0}
\delta:=\chi(t_n)-\chi_n,\qquad
w:=\partial_x^{-1}(\psi(t_n)-\psi_n),\qquad
z:=\chi(t_n).
\end{equation}
Using the same Fourier expansion argument as above, we obtain
\begin{align*}
    \big|B\big|&=\frac{1}{\sqrt{2\pi}}\Big|\sum_{\ell\ne 0}\hat{\delta}_{-\ell}\ell^{-1}\sum_{\substack{\ell_1\ne 0,\ell_2\ne 0\\ \ell_1+\ell_2=\ell}} \big(1-{\rm e}^{-3i\tau\ell_1 \ell_2(\ell_1+\ell_2)}\big)\hat{w}_{\ell_1}\hat{z}_{\ell_2}\Big| \\
    &=\frac{1}{\sqrt{2\pi}}\Big|\sum_{\substack{\ell_1,\ell_2\ne 0\\ \ell_1+\ell_2\ne 0}}\hat{\delta}_{-(\ell_1+\ell_2)}(\ell_1+\ell_2)^{-1} \big(1-{\rm e}^{-3i\tau\ell_1 \ell_2(\ell_1+\ell_2)}\big)\hat{w}_{\ell_1}\hat{z}_{\ell_2}\Big| 
\end{align*}
A standard interpolation argument and Hölder inequality then give
\begin{align*}
     \big|B\big| & \lesssim \tau \sum_{\substack{\ell_1,\ell_2\ne 0\\ \ell_1+\ell_2\ne 0}}\big|\hat{\delta}_{-(\ell_1+\ell_2)}\big| \big| \ell_1 \ell_2\big| \big|\hat{w}_{\ell_1} \big| \big|\hat{z}_{\ell_2} \big|\\
     &\lesssim \tau \Big(\sum_{\ell\in\mathbbm{Z}} \big|\hat{\delta}_{\ell} \big|^2\Big)^{1/2}
     \Big(\sum_{\ell\in\mathbbm{Z}}\Big| \sum_{\ell_1+\ell_2=\ell}\big|\ell_1\big| \big|\ell_2\big|\big|\hat{w}_{\ell_1}\big|
     \big|\hat{z}_{\ell_2}\big|\Big|^2\Big)^{1/2}\\
     &\lesssim\tau \big\|\delta\big\| \big\| w^{(1)} z^{(1)}\big\|\\
     &\lesssim \tau \big\|\delta\big\| \big\|w\big\|_{\dot{H}^1} \big\|z^{(1)}\big\|_{L^\infty}.
\end{align*}
In view of \eqref{tr0} and Assumption \ref{ass_det_conv2}, which gives $\beta=1$ in Assumption \ref{ass_det_conv},
\begin{align*}
\big| B \big|&\lesssim\tau \big\|\tilde{\chi}(t_n)-\tilde\chi_n\big\| \big\|\tilde\psi(t_n)-\tilde\psi_n\big\| \big\|\tilde\chi^{(1)}(t_n)\big\|_{L^\infty}\\
&\lesssim \tau^2  \big\|\tilde{\chi}(t_n)-\tilde\chi_n\big\|\big\|\tilde\chi^{(1)}(t_n)\big\|_{L^\infty}.
\end{align*}
Then, since
$$
\big\|\tilde\chi^{(1)}(t_n)\big\|_{L^2(\Omega; L^\infty)}\lesssim \big\|\tilde\chi\big\|_{\mathcal{L}_T^{\infty,p,2}}<\infty,
$$
an application of Hölder inequality gives
$$
\mathbbm{E} \big| B \big|\lesssim\tau^2 \big\|\tilde{\chi}(t_n)-\tilde\chi_n\big\|_{L^2(\Omega; L^2)}.
$$
Similarly, taking
$$
w:=\psi(t_n)-\psi_n,\qquad
z:=\partial_x^{-1}\chi(t_n),
$$
and using Assumption \ref{ass_det_conv2}, 
we obtain for the term $A$
\begin{align*}
|A|
&\lesssim
\tau
\|\tilde \chi(t_n)-\tilde \chi_n\|\,
\|\tilde\psi(t_n)-\tilde\psi_n\|_{\dot{H}^1}\,
\|\tilde \chi(t_n)\|_{L^\infty}\\
&\lesssim
\tau^2
\|\tilde \chi(t_n)-\tilde \chi_n\|\,
\|\tilde \chi(t_n)\|_{L^\infty}.
\end{align*}
H\"older's inequality then implies
$\mathbbm{E} \big| A\big|\lesssim \tau^2 \|\tilde \chi(t_n)-\tilde \chi_n\|_{L^2(\Omega; L^2)}.$
Combining the estimates for $A$ and $B$, we conclude that
\begin{equation}
\label{I61}
\begin{aligned}
I_6^1
\lesssim
\tau^2
\|\tilde \chi(t_n)-\tilde \chi_n\|_{L^2(\Omega;L^2)}.
\end{aligned}
\end{equation}
Combining \eqref{I61} and \eqref{I62}, we arrive at
\begin{equation}
\label{I6}
I_6\lesssim \tau^2 \|\tilde \chi(t_n)-\tilde \chi_n\|_{L^2(\Omega;L^2)}.
\end{equation}

Finally, taking into account the estimates for $I_1$, $I_2$, $I_3$, and $I_4$ from the proof of Theorem \ref{conv_thm}, together with $\beta=1$, the estimate \eqref{I6}, and Lemma \ref{lem0}, the conclusion follows by the same argument as in the proof of Theorem \ref{conv_thm}. We omit the remaining routine details.
\end{proof}

We conclude with the following corollary, whose proof is omitted since it follows in the same way as the proof of Corollary \ref{cor1}.

\begin{corollary}
\label{cor2}
Under the assumptions of Theorem \ref{conv_thm2}, the approximation $u_n=\psi_n+\varepsilon \chi_n$ to the stochastic KdV equation \eqref{kdv_eq} satisfies
$$
\max_n \|u(t_n)-u_n\|_{L^2(\Omega;L^2)}
\lesssim
\tau+\varepsilon^2.
$$
\end{corollary}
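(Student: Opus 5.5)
The plan is to repeat the proof of Corollary \ref{cor1} essentially verbatim, substituting the sharper ingredients now available. Starting from the decomposition $u=\psi+\varepsilon u^\varepsilon$ in \eqref{ans} and $u_n=\psi_n+\varepsilon\chi_n$, the triangle inequality gives, for every $n$,
\begin{align*}
\|u(t_n)-u_n\|_{L^2(\Omega;L^2)}
\le{}&
\|\psi(t_n)-\psi_n\|
+\varepsilon\|u^\varepsilon(t_n)-\chi(t_n)\|_{L^2(\Omega;L^2)}
+\varepsilon\|\chi(t_n)-\chi_n\|_{L^2(\Omega;L^2)}.
\end{align*}
Since $\psi$ and $\psi_n$ are nonrandom, the first term is just the deterministic $L^2$ error.

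The three terms will be handled separately.

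\textbf{Deterministic term.} Use Assumption \ref{ass_det_conv2}. Because $\|\cdot\|\le\|\cdot\|_{H^1}$, it gives $\max_n\|\psi(t_n)-\psi_n\|\lesssim\tau$. The constant $C_{\alpha,\sigma}$ is independent of $\tau$ and $\varepsilon$, so it can be absorbed into $\lesssim$.

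\textbf{Linearization term.} Apply Proposition \ref{prop_wp_lin} with $p=1$. Its hypotheses are those of Lemma \ref{lem_H1b}, namely $Q^{1/2}\in\mathcal L_2^1$ and $\xi\in H^2$. These are implied by the setting of Lemma \ref{lem0}, which contains Lemma \ref{reg_plus} and hence requires $Q^{1/2}\in\mathcal L_2^2$ and $\xi\in H^3$. The proposition gives $\varepsilon\sup_t\|u^\varepsilon(t)-\chi(t)\|_{L^2(\Omega;L^2)}\lesssim\varepsilon^2$, uniformly in $\varepsilon\in(0,1)$.

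\textbf{Fluctuation discretization term.} Theorem \ref{conv_thm2} gives $\varepsilon\max_n\|\chi(t_n)-\chi_n\|_{L^2(\Omega;L^2)}\lesssim\varepsilon\tau\le\tau$.

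Summing the three bounds and taking the maximum over $n$ yields $\tau+\varepsilon^2$. There is no genuine obstacle here; the statement is a direct assembly of earlier results. The only point needing care is the bookkeeping of constants. The implied constants from Proposition \ref{prop_wp_lin} and Theorem \ref{conv_thm2} depend on moment bounds of $\chi$ and $u^\varepsilon$ and on norms of $\psi$ and $\psi_n$. These are finite and $\varepsilon$-independent by Lemma \ref{lem_H1b}, Lemma \ref{reg_plus} and Remark \ref{wp_stoc}, with $\sup_{\varepsilon}$ already built into \eqref{es_t}. Consequently the final bound holds uniformly in $\varepsilon\in(0,1)$ and $\tau\in(0,\tau_0)$.
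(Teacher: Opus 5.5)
Your proposal is correct and is the argument the paper intends. The paper omits the proof, saying it follows as for Corollary \ref{cor1}: the same three-term triangle inequality, with Assumption \ref{ass_det_conv2} replacing Assumption \ref{ass_det_conv} and Theorem \ref{conv_thm2} replacing Theorem \ref{conv_thm}, together with Proposition \ref{prop_wp_lin} and the bound $\varepsilon\tau\le\tau$.
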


Corollaries~\ref{cor1} and \ref{cor2} give the two strong error estimates stated in the introduction. Under $H^1$-regularity, the total error contains the stochastic contribution $\varepsilon\tau^{1/2}$. Under $H^2$-regularity, this contribution improves to $\varepsilon\tau$, and the total error becomes $O(\tau+\varepsilon^2)$.
This confirms that the proposed numerical integrator can show small errors, in the small-noise regime, even at low-regularity, specially if compared to other classical stochastic integrators. This will be further pointed out in the next section.
%captures the leading stochastic fluctuation without requiring exponential moment estimates for the original nonlinear stochastic KdV dynamics.

\section{Numerical experiments}

\label{exp_section}

In this section, we present several numerical experiments to illustrate all the theoretical results. 
%The experiments have three purposes. First, we verify the convergence rates of the deterministic low-regularity integrator and of the linearized stochastic fluctuation scheme. Second, we compare the proposed small-noise low-regularity method with a classical Crank--Nicolson method for the full stochastic KdV equation. Third, we test the perturbative approximation $u^\varepsilon\approx \chi$ as $\varepsilon\to0$.

\subsection{Numerical setting} 

We here introduce the setting used throughout the experiments. In view of Assumption \ref{ass_zero_mode} and Remark \ref{simul}, the covariance operator is chosen diagonal in the Fourier basis and does not excite the zero mode:
$$
q_0=0, \qquad q_{\ell}=|\ell|^{-q}, \qquad \ell \in\mathbbm{Z}_{\ne 0},
$$
where $q>0$ controls the spatial regularity of the noise. Note that, for $\sigma \ge 0$, using $Q{\rm e}^{i\ell x}=q_{\ell}{\rm e}^{i\ell x}$, we have
$$
\|Q^{1/2}\|^2_{\mathcal{L}_2^\sigma}=
\frac{1}{\pi}\sum_{\ell\in\mathbbm{N}}\|Q^{1/2}\cos(\ell x)\|^2_{H^\sigma}
+\frac{1}{\pi}\sum_{\ell\in\mathbbm{N}}\|Q^{1/2}\sin(\ell x)\|^2_{H^\sigma}.
$$
Using the representations of $\cos(\ell x)$ and $\sin(\ell x)$ in terms of complex exponentials, this becomes
$$
\begin{aligned}
\|Q^{1/2}\|^2_{\mathcal{L}_2^\sigma}
&=\frac{1}{4\pi}\sum_{\ell\in\mathbbm{N}}\|Q^{1/2}({\rm e}^{i\ell x}+{\rm e}^{-i\ell x})\|^2_{H^\sigma}
+\frac{1}{4\pi}\sum_{\ell\in\mathbbm{N}}\|Q^{1/2}({\rm e}^{i\ell x}-{\rm e}^{-i\ell x})\|^2_{H^\sigma}.
\end{aligned}
$$
Therefore,
$$
\|Q^{1/2}\|^2_{\mathcal{L}_2^\sigma}
\lesssim
\frac{1}{4\pi}\sum_{\ell\in\mathbbm{Z}_{\ne 0}}\|Q^{1/2}{\rm e}^{i\ell x}\|^2_{H^\sigma}
\lesssim
\frac{1}{4\pi}\sum_{\ell\in\mathbbm{Z}_{\ne 0}}q_{\ell}\,|\ell|^{2\sigma}.
$$
Since $q_\ell=|\ell|^{-q}$, we further obtain
$$
\|Q^{1/2}\|^2_{\mathcal{L}_2^\sigma}
\lesssim
\frac{1}{4\pi}\sum_{\ell\in\mathbbm{N}}\ell^{2\sigma-q},
$$
which is finite provided that $q>2\sigma+1$. 

We give the initial datum $\xi$ through its Fourier coefficients. Since $\xi$ is real-valued, it is enough to specify the coefficients for positive modes. We take
\begin{equation}
    \label{det_coeff0}
\hat\xi_0=0, \qquad \hat\xi_\ell=\ell^{-\rho}, \qquad \ell \in \mathbbm{N}, 
\end{equation}
for some $\rho>0$. It is straightforward to check that $\xi\in H^s$, provided $\rho>s+1/2$. 
Unless otherwise specified, in \eqref{kdv_eq}, we always take $\mu=1/2$. 
In the spatial direction, we adopt a pseudo-spectral method with $\kappa \in \mathbbm{N}^+$ Fourier modes. More precisely, we consider the spatial grid points
$$
x_j=-\pi+j \frac{2\pi}{\kappa}, \qquad j=1,\dots,\kappa,
$$
and retain Fourier modes in the range $\ell\in[-\kappa/2+1,\dots,\kappa/2]$. 

For the approximation of the deterministic solution, we consider the numerical integrator given in \cite{kat0}, i.e.,
\begin{equation}
\label{det_int0}
\psi_n={\rm e}^{-\tau\partial_x^3}\psi_{n-1}+\frac{1}{6}\big({\rm e}^{-\tau\partial_x^3}\partial_x^{-1}\psi_{n-1}\big)^2-\frac{1}{6}{\rm e}^{-\tau\partial_x^3} (\partial_x^{-1}\psi_{n-1})^2, 
\end{equation}
so that, our integrator for \eqref{lin_eq} reads
\begin{equation}
\label{stoc_ex1}
\chi_{n+1}={\rm e}^{-\tau \partial_x^3}\chi_n+\frac{1}{3}\big({\rm e}^{-\tau\partial_x^3}\partial_x^{-1}\psi_n\big)\big({\rm e}^{-\tau\partial_x^3}\partial_x^{-1}\chi_{n}\big)+\Delta \mathcal{W}^n-\frac{1}{3}{\rm e}^{-\tau\partial_x^3}\big(\partial_x^{-1}\psi_n\big)\big(\partial_x^{-1}\chi_{n}\big),
\end{equation}
where
$$
\Delta\mathcal{W}^n:=\int_{t_n}^{t_{n+1}}{\rm e}^{-(t_{n+1}-s)\partial_x^3} Q^{1/2}\,dW(s).
$$
In the diagonal covariance setting above, this stochastic convolution can be simulated exactly mode by mode. The deterministic component $\psi_n$ is computed using \eqref{det_int0}. According to \cite{kat0}, if $\hat\xi_0=0$, then the numerical solution \eqref{det_int0} satisfies $\hat \psi_{n,0}=0$ for all $n=1,\dots,N$.

\subsection{Errors for the linearized equation} 

 We here verify the convergence behavior predicted by Theorems~\ref{conv_thm} and \ref{conv_thm2}. 
We consider two regularity regimes. In the first regime, i.e., $\xi\in H^2$ and $Q^{1/2}\in\mathcal L_2^1$. We take $\rho=5/2+10^{-6}$ and $q=3.02$, so that $\psi\in C([0,T];H^2)$ and $\chi\in\mathcal L^{\infty,p,1}$ for every $p\in\mathbbm N$.

% {\color{purple}{How about: The left panel of Figure~\ref{fig1} illustrates numerically a first-order of convergence in $L^2$ for the deterministic scheme \eqref{det_int0}, provided $\xi \in H^2$, and an one-fourth order of convergence of $\big\|\big( \psi(t_n)-\psi_n\big)^{(1)}\big\|_{\ell^1}$. This indicates that Assumption~\ref{ass_det_conv} holds with $\beta=1$, and Theorem~\ref{conv_thm} predicts order $1/2$ convergence for \eqref{stoc_ex1} }}
% {\color{green}still has some misunderstanding parts. For instance, what if the reviewer ask us to provide the proof for verification assumption 5.4.}

% If we do not have a proof, we may use the following:

The left panel of Figure~1 shows first-order convergence in \(L^2\) for the deterministic scheme (\eqref{det_int0}), provided that \(\xi\in H^2\). It also displays the behavior of the discrete derivative error
\(
\|(\psi(t_n)-\psi_n)^{(1)}\|_{\ell^1},
\)
which decreases numerically with an observed rate close to \(1/4\). Although this derivative-error estimate is not proved theoretically here, it suggests that the required uniform boundedness of \(\psi_n^{(1)}\) is reasonable in the present setting. Together with the observed \(L^2\)-convergence, this is consistent with Assumption~5.4 for \(\beta=1\). Consequently, Theorem~5.6 predicts order \(1/2\) convergence for the stochastic approximation (\eqref{stoc_ex1}) in this case.

{In the second regime, $\xi\in H^3$ and $Q^{1/2}\in\mathcal L_2^2$. We take $\rho=7/2+10^{-6}$ and $q=5.05$. Then $\psi\in C([0,T];H^3)$ and $\chi\in\mathcal L^{\infty,p,2}$ for every $p\in\mathbbm N$. In this case, the deterministic method \eqref{det_int0} has order $1/2$ in $H^2$ and order $1$ in $H^1$ according to \cite{kat0}. Hence Assumption~\ref{ass_det_conv2} is satisfied, and Theorem~\ref{conv_thm2} predicts first-order convergence of \eqref{stoc_ex1}. The right panel of Figure~\ref{fig1} confirms both the $H^1$- and $H^2$-regularity convergence regimes.}

{For Figure~\ref{fig1}, the reference solutions are computed with the same methods using $\tau_{\rm ref}=2^{-17}$. The tested time steps are $\tau_j=2^j\tau_{\rm ref}$, $j=1,\dots,12$. We use $\kappa=2^9$ Fourier modes and approximate expectations using $M=100$ independent sample paths.}

\subsection{Approximation of the stochastic KdV equation \eqref{kdv_eq}}

We next test the full approximation $u_n=\psi_n+\varepsilon\chi_n$ for the stochastic KdV equation \eqref{kdv_eq}. Combining \eqref{det_int0} and \eqref{stoc_ex1} gives the small-noise low-regularity scheme, denoted by SLR:
\begin{align}
u_{n+1}={}&{\rm e}^{-\tau\partial_x^3} u_n
+\frac{1}{6}\big({\rm e}^{-\tau\partial_x^3}\partial_x^{-1}\psi_n\big)^2
-\frac{1}{6}{\rm e}^{-\tau\partial_x^3}\big(\partial_x^{-1}\psi_n\big)^2 \notag\\
&+\frac{\varepsilon}{3}\big({\rm e}^{-\tau\partial_x^3}\partial_x^{-1}\psi_n\big)
\big({\rm e}^{-\tau\partial_x^3}\partial_x^{-1}\chi_{n}\big)
-\frac{\varepsilon}{3}{\rm e}^{-\tau\partial_x^3}
\big(\partial_x^{-1}{\psi_n}\big)
\big(\partial_x^{-1}\chi_{n}\big)
+\varepsilon\Delta \mathcal{W}^n.
\label{full_met}
\end{align}

\begin{figure}
\centering
\subfigure{\includegraphics[width=0.48\textwidth]{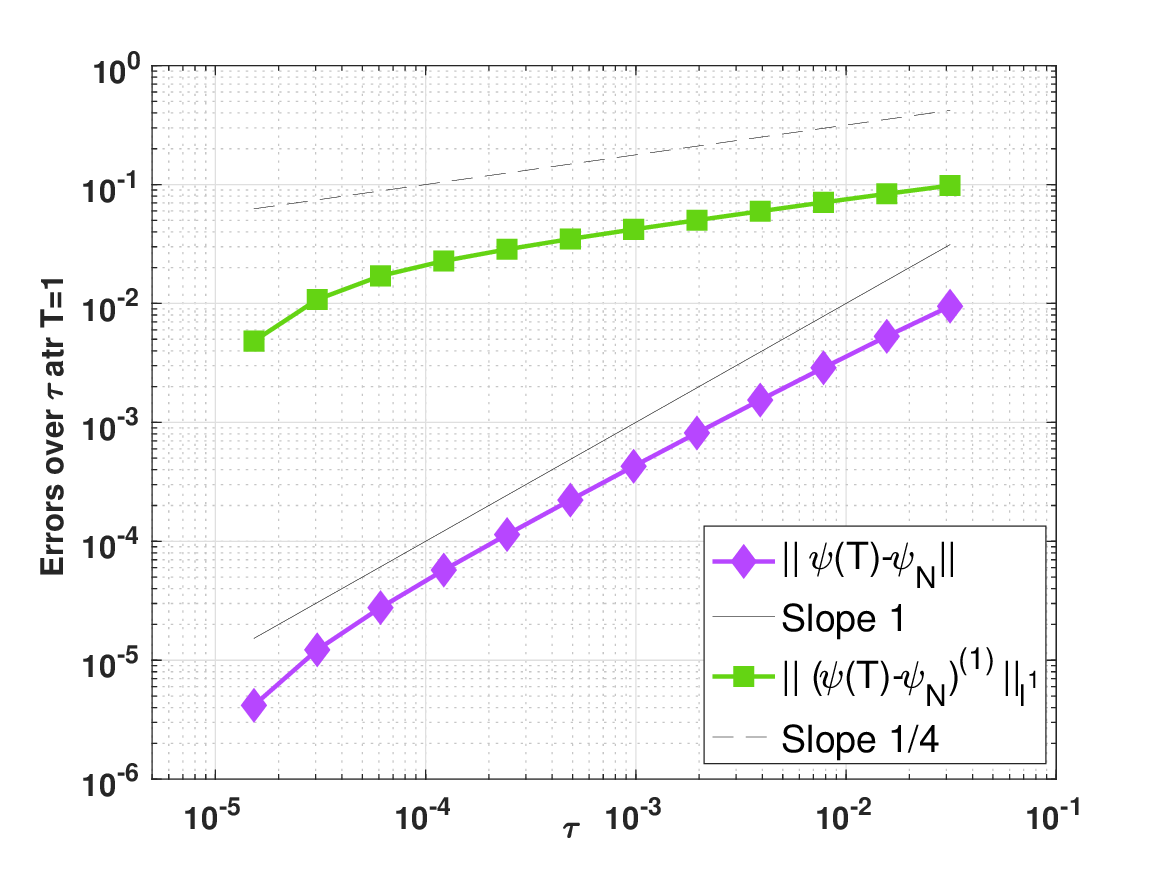}}\quad
\subfigure{\includegraphics[width=0.48\textwidth]{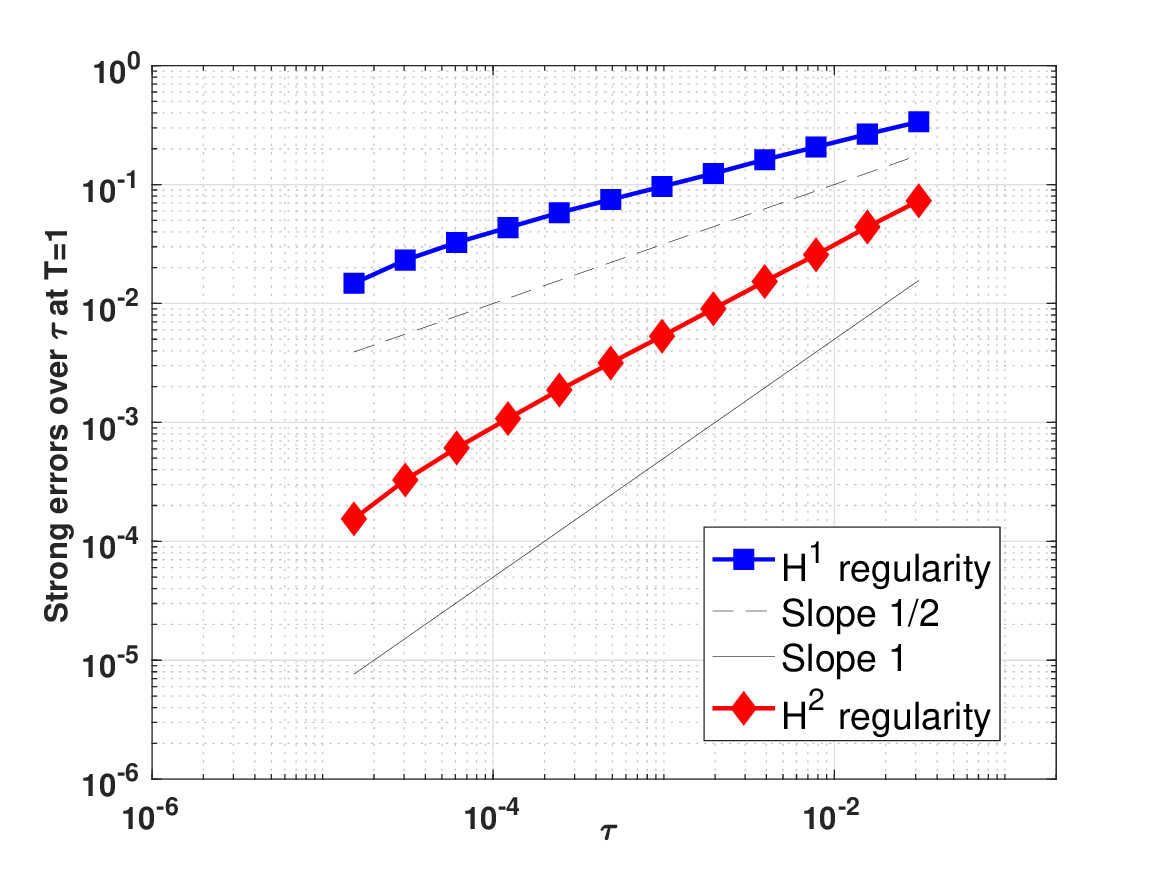} }
          \caption{Verification of the deterministic and linearized fluctuation error estimates. Left: deterministic errors $\|\psi(T)-\psi_N\|$ and $\|(\psi(T)-\psi_N)^{(1)}\|_{\ell^1}$ at $T=1$ for \eqref{det_int0} with $\xi\in H^2$. Right: $L^2(\Omega;L^2)$ errors for \eqref{stoc_ex1} in the regimes $\xi\in H^2$, $Q^{1/2}\in\mathcal L_2^1$ and $\xi\in H^3$, $Q^{1/2}\in\mathcal L_2^2$. Both panels are plotted in log-log scale.}
           \label{fig1}
\end{figure}

\begin{figure}
\centering
\subfigure{\includegraphics[width=0.5\textwidth]{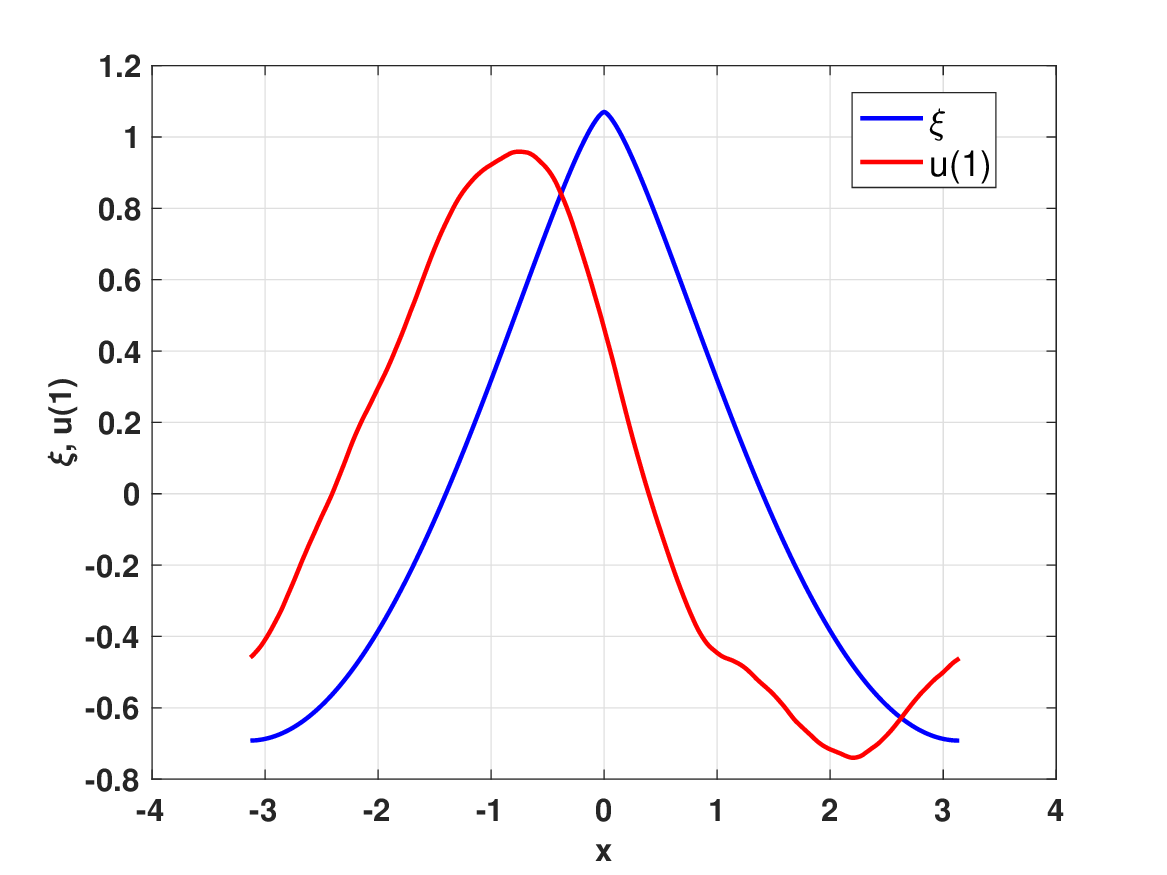}}
\caption{Initial datum and one reference trajectory at $T=1$, computed by \eqref{ref_ex1} with $\tau=2^{-20}$, $\kappa=2^{9}$, and $\varepsilon=0.1$. The parameters are chosen so that $\xi\in H^2$ and $Q^{1/2}\in\mathcal L_2^1$.}
\label{fig2}
\end{figure}

{For comparison, we also implement the Crank--Nicolson (CN) method used in \cite{deb3}:}
\begin{equation}
\label{crank}
u_{n+1}=u_n-\frac{1}{2}\tau \partial_x^3 \big(u_{n+1}+u_n\big)
+\frac{1}{8}\tau \partial_x \big((u_{n+1}+u_n)^2\big)
+\varepsilon Q^{1/2}\big(W(t_n+\tau)-W(t_n)\big).
\end{equation}
{The reference solution is computed by the exponential low-regularity scheme}
\begin{equation}
\label{ref_ex1}
u_{n+1}={\rm e}^{-\tau\partial_x^3}u_{n}
+\frac{1}{6}\big({\rm e}^{-\tau\partial_x^3}\partial_x^{-1}u_{n}\big)^2
-\frac{1}{6}{\rm e}^{-\tau\partial_x^3}\big(\partial_x^{-1}u_{n}\big)^2
+\varepsilon\Delta\mathcal{W}^n.
\end{equation}

Figure~\ref{fig2} shows the initial datum and one reference trajectory at $T=1$ for $\varepsilon=0.1$, computed by \eqref{ref_ex1} with $\tau=2^{-20}$ and $\kappa=2^9$. We take $\rho=5/2+10^{-6}$ and $q=3+10^{-6}$, corresponding to the regime $\xi\in H^2$ and $Q^{1/2}\in\mathcal L_2^1$. The plot is included to illustrate the qualitative behavior of the stochastic solution.

{Figure~\ref{fig3} reports the $L^2(\Omega;L^2)$ errors for the SLR and CN methods. The errors are computed at $T=1/2$. The reference solution \eqref{ref_ex1} uses $\tau_{\rm ref}=2^{-20}$, while the tested time steps are $\tau_j=2^j\tau_{\rm ref}$, $j=8,\dots,17$. All simulations use $\kappa=2^9$ Fourier modes and $M=100$ sample paths.}

{The left panel corresponds to the $H^1$-regularity regime for the fluctuation: $\xi\in H^2$, $Q^{1/2}\in\mathcal L_2^1$, with $\rho=5/2+10^{-6}$ and $q=3.05$. We test $\varepsilon=10^{-3},10^{-2},5\times10^{-2},10^{-1}$. For very small noise, the deterministic error dominates and the SLR method exhibits an almost first-order slope. For larger values of $\varepsilon$, the stochastic fluctuation contribution becomes visible and the observed rate is close to the order $1/2$ predicted by Corollary~\ref{cor1}.}

{The right panel corresponds to the $H^2$-regularity regime for the fluctuation: $\xi\in H^3$, $Q^{1/2}\in\mathcal L_2^2$, with $\rho=7/2+10^{-6}$ and $q=5.05$. We test $\varepsilon=10^{-2},5\times10^{-2},10^{-1}$. In this regime, Corollary~\ref{cor2} predicts the total error $O(\tau+\varepsilon^2)$, and the numerical results show first-order convergence until the error reaches the $\varepsilon^2$ level. In both panels, the SLR method gives smaller errors and better observed rates than the CN method. We plot only the CN curve for $\varepsilon=10^{-1}$ because the CN curves for the other tested noise amplitudes are nearly indistinguishable.}

\begin{figure}[htbp]
\centering
\subfigure{\includegraphics[width=0.48\textwidth]{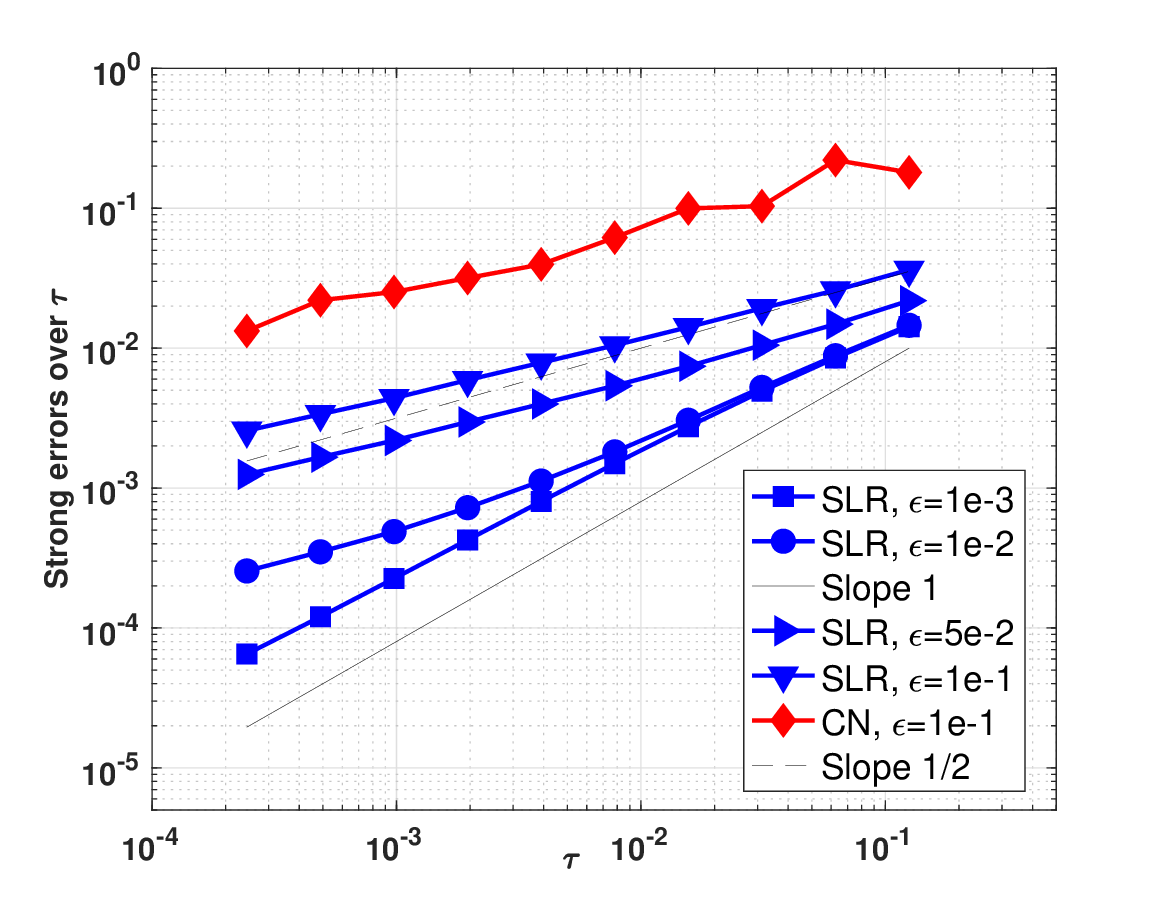}} \quad
\subfigure{\includegraphics[width=0.48\textwidth]{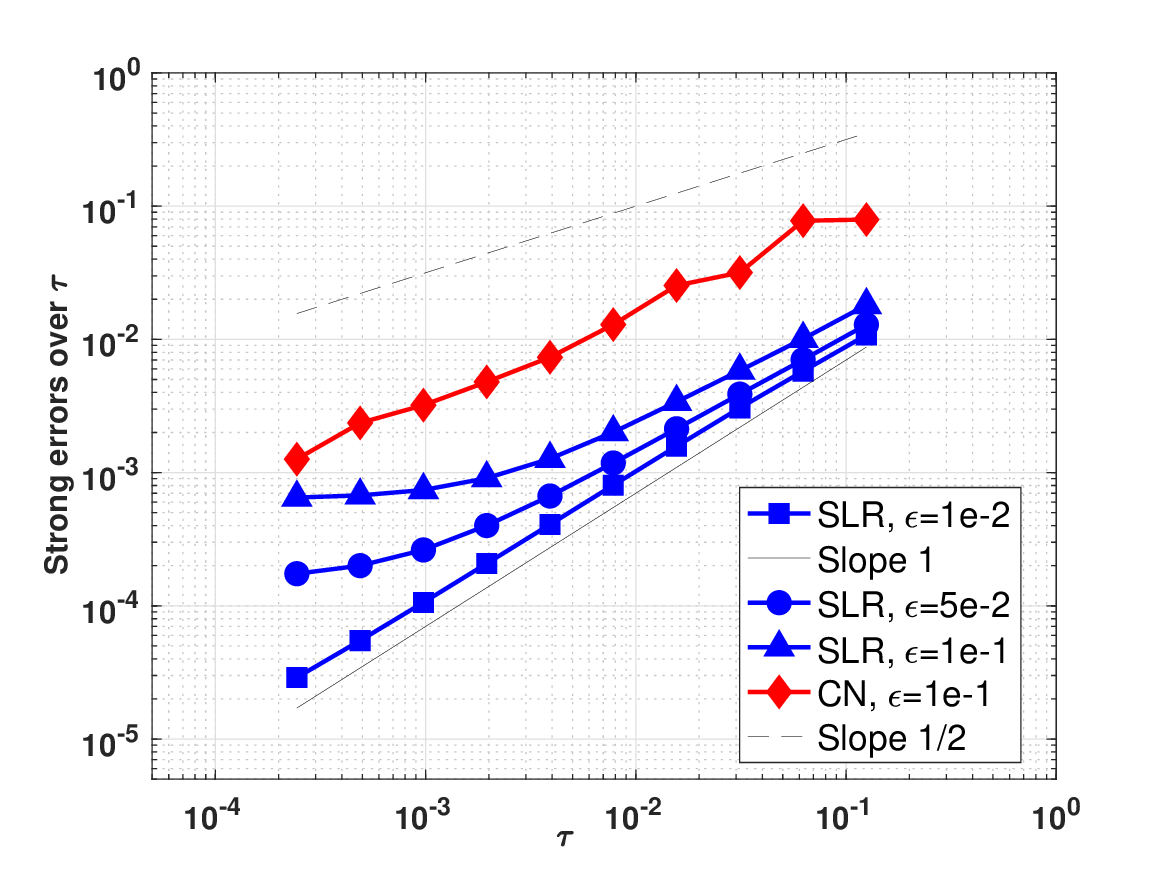}} 
          \caption{$L^2(\Omega;L^2)$ errors at $T=1/2$ for the SLR method \eqref{full_met} and the CN method \eqref{crank}. Left: $\xi\in H^2$ and $Q^{1/2}\in\mathcal L_2^1$. Right: $\xi\in H^3$ and $Q^{1/2}\in\mathcal L_2^2$. Both panels are plotted in log-log scale.}
           \label{fig3}
\end{figure}

\subsection{Small-noise linearization test}

{The final experiment verifies the perturbative estimate between the rescaled stochastic fluctuation $u^\varepsilon$ and its linearization $\chi$. Recall that}
\[
u^\varepsilon(T)=\frac{1}{\varepsilon}\big(u(T)-\psi(T)\big).
\]
{According to Proposition~\ref{prop_wp_lin}, the error $\|u^\varepsilon(T)-\chi(T)\|_{L^2(\Omega;L^2)}$ should be of order $\varepsilon$.}

{In Figure~\ref{fig4}, we compute this error at $T=1$. The full solution $u(T)$ is approximated by \eqref{ref_ex1}, the deterministic component $\psi(T)$ by \eqref{det_int0}, and the linearized fluctuation $\chi(T)$ by \eqref{stoc_ex1}. We use $\tau=2^{-14}$, $\kappa=2^{10}$, $M=100$ sample paths, $\rho=5/2+10^{-6}$, and $q=3.05$, so that $\xi\in H^2$ and $Q^{1/2}\in\mathcal L_2^1$. The tested noise amplitudes are $\varepsilon_j=2^{-j}$, $j=1,\dots,8$. The observed first-order decay in $\varepsilon$ confirms the linearization estimate.}

% Together, the experiments support the two main theoretical conclusions: the proposed SLR method captures the leading stochastic fluctuation under low regularity, and the error improves from $O(\varepsilon\tau^{1/2})$ to $O(\varepsilon\tau)$ when the fluctuation has one additional derivative of regularity.

\iffalse
Finally, in Figure \ref{fig4}, we report the $L^2(\Omega;L^2)$ errors of $\chi(T)-u^\varepsilon(T)$ at $T=1$, as $\varepsilon\to 0$. In particular, by \eqref{fluct}, we have
$$
u^\varepsilon(T)=\frac{1}{\varepsilon}\big(u(T)-\psi(T)\big).
$$
\rev{We approximate $u(T)$ by method \eqref{ref_ex1}, $\psi(T)$ by method \eqref{det_int0}, and $\chi(T)$ by method \eqref{stoc_ex1}, using time step $\tau=2^{-14}$, $\kappa=2^{10}$ Fourier modes, and imposing $\xi\in H^2$, i.e., $\rho=5/2+10^{-6}$, and $Q^{1/2}\in\mathcal{L}_2^1$, i.e., $q=3.05$. Moreover, we test the values $\varepsilon_j=2^{-j}$, for $j=1,\dots,8$, and approximate the expectations using $M=100$ sample paths. The first-order convergence with respect to the parameter $\varepsilon$, predicted by Proposition \ref{prop_wp_lin}, is also confirmed.}
\fi

\begin{figure}
\centering
\subfigure{\includegraphics[width=0.5\textwidth]{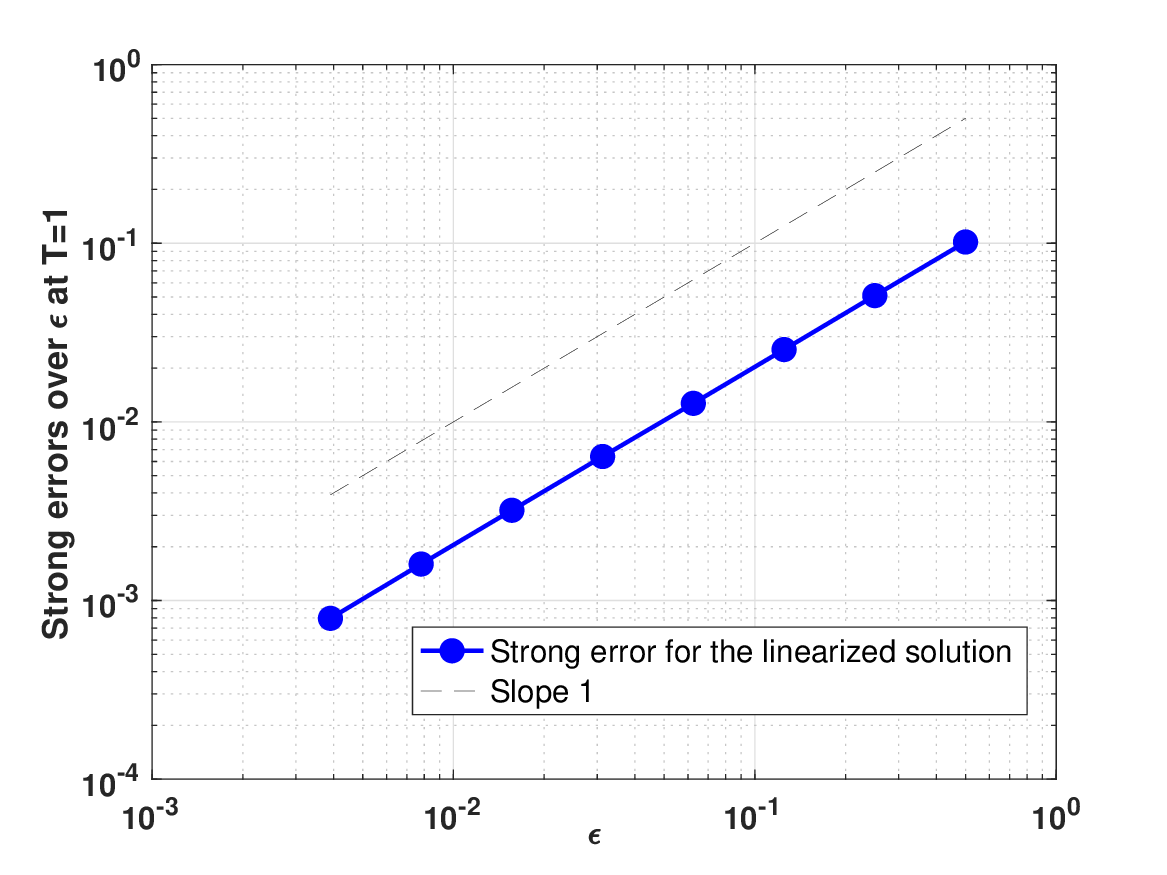}}
          \caption{$L^2(\Omega;L^2)$ errors for $u^\varepsilon(1)-\chi(1)$ as $\varepsilon\to0$, with $\xi\in H^2$ and $Q^{1/2}\in\mathcal L_2^1$. The plot is in log-log scale and confirms the first-order small-noise linearization error.}
           \label{fig4}
\end{figure}

\appendix

\section{Proof of technical results}
\label{appen}
\subsection{Proof of Lemma \ref{lem_H1b}}
\label{appen_h1b}
First, we note that, since $\xi\in H^2$, Remark \ref{wp_stoc} gives $\psi\in C([0,T];H^2)$.
The existing well-posedness theory for stochastic KdV-type equations (see, for instance, \cite{deb2,deb0,deb1,Oh}) yields existence and uniqueness of solutions to \eqref{lin_eq}, with $\chi\in C([0,T]; H^1)$. Moreover, \eqref{fluct} gives $u^\varepsilon\in C([0,T]; H^1)$.
We have to prove the bound in \eqref{es_t}. We only give the estimate for $u^\varepsilon$ since the argument for $\chi$ is analogous.
We begin by establishing the following moment estimate
\begin{equation}\label{L2es}
\sup_{\varepsilon\in(0,1)}
\mathbbm{E}\Big[\sup_{t\in[0,T]}\|u^\varepsilon(t)\|^{2p}\Big]
<\infty.
\end{equation}
Let $R\ge 1$ be a sufficiently large number, and define the stopping time
$$
\tau_R:=T\wedge \inf\bigl\{t>0:\|u^\varepsilon(t)\|_{H^1}\ge R\bigr\}.
$$
Then, by \cite[Lemma 3.2]{mao}, one has $\tau_R\to T$ almost surely as $R\to\infty$. 
Define
$$
u^{\varepsilon,R}(t):=u^\varepsilon(t\wedge \tau_R),
\qquad t\in[0,T].
$$
Then $u^{\varepsilon,R}$ satisfies
\begin{align*}
u^{\varepsilon,R}(t)
=
&\int_0^t \Big(
-\partial_x^3u^{\varepsilon,R}(s)
+2\mu\,\partial_x\bigl(\psi(s)u^{\varepsilon,R}(s)\bigr)
+\mu\varepsilon\,\partial_x\bigl((u^{\varepsilon,R}(s))^2\bigr)
\Big)\mathbf{1}_{[0,\tau_R]}(s)\,ds\\
&+\int_0^t \mathbf{1}_{[0,\tau_R]}(s)Q^{1/2}\,dW(s).
\end{align*}

First, we consider the case $p\ge 2$. Applying It\^o's formula to $\|u^{\varepsilon,R}(t)\|^{2p}$ yields
\begin{align*}
\|u^{\varepsilon,R}(t)\|^{2p}
&=-2p\int_0^t \|u^{\varepsilon,R}(s)\|^{2p-2}
\big\langle u^{\varepsilon,R}(s),\partial_x^3u^{\varepsilon,R}(s)\big\rangle
\mathbf{1}_{[0,\tau_R]}(s)\,ds\\
&\quad
+2\mu p\int_0^t \|u^{\varepsilon,R}(s)\|^{2p-2}
\big\langle u^{\varepsilon,R}(s),
2\psi(s)\partial_xu^{\varepsilon,R}(s)
+2u^{\varepsilon,R}(s)\partial_x \psi(s)\\
&\hspace{5.9cm}
+\varepsilon\,\partial_x\bigl((u^{\varepsilon,R}(s))^2\bigr)
\big\rangle
\mathbf{1}_{[0,\tau_R]}(s)\,ds\\
&\quad
+p\sum_k \langle Q^{1/2}e_k,Q^{1/2}e_k\rangle
\int_0^t \|u^{\varepsilon,R}(s)\|^{2p-2}\mathbf{1}_{[0,\tau_R]}(s)\,ds\\
&\quad
+2p(p-1)\int_0^t \|u^{\varepsilon,R}(s)\|^{2p-4}
\sum_k \big\langle u^{\varepsilon,R}(s),Q^{1/2}e_k\big\rangle^2
\mathbf{1}_{[0,\tau_R]}(s)\,ds\\
&\quad
+2p\int_0^t \|u^{\varepsilon,R}(s)\|^{2p-2}
\mathbf{1}_{[0,\tau_R]}(s)
\big\langle u^{\varepsilon,R}(s),Q^{1/2}\,dW(s)\big\rangle.
\end{align*}
Using integration by parts and periodic boundary conditions of $\partial_x u$, we have
$$
\big\langle u^{\varepsilon,R}(s),\partial_x^3u^{\varepsilon,R}(s)\big\rangle
=-\big\langle \partial_x u^{\varepsilon,R}(s), \partial_x^2 u^{\varepsilon,R}(s) \big\rangle =0
$$
and
\begin{align*}
&\big\langle u^{\varepsilon,R}(s),
\psi(s)\partial_xu^{\varepsilon,R}(s)\big\rangle
+\big\langle u^{\varepsilon,R}(s),
u^{\varepsilon,R}(s)\partial_x\psi(s)\big\rangle
=\frac12\big\langle \partial_x\bigl((u^{\varepsilon,R}(s))^2\bigr),\psi(s)\big\rangle
+\big\langle (u^{\varepsilon,R}(s))^2,\partial_x\psi(s)\big\rangle\\
&
=\frac12\big\langle (u^{\varepsilon,R}(s))^2,\partial_x\psi(s)\big\rangle 
\le \frac12\|\partial_x\psi(s)\|_{L^\infty}\|u^{\varepsilon,R}(s)\|^2.
\end{align*}
Therefore, 
\begin{align*}
\|u^{\varepsilon,R}(t)\|^{2p}
&\le2p\mu \|\psi\|_{L^\infty([0,t];W^{1,\infty})}
\int_0^t \|u^{\varepsilon,R}(s)\|^{2p}\,ds\\
&+\|Q^{1/2}\|_{\mathcal{L}_2^0}^2\bigl(p+2p(p-1)\bigr)
\int_0^t \|u^{\varepsilon,R}(s)\|^{2p-2}\,ds\\
&+2p\Big|
\int_0^t \|u^{\varepsilon,R}(s)\|^{2p-2}
\mathbf{1}_{[0,\tau_R]}(s)
\big\langle u^{\varepsilon,R}(s),Q^{1/2}\,dW(s)\big\rangle
\Big|.
\end{align*}
By means of Young's inequality, we infer that
\begin{align*}
\|u^{\varepsilon,R}(t)\|^{2p}
&\le \Big(2\mu p\|\psi\|_{L^\infty([0,t];W^{1,\infty})}+\frac{p-1}{p}\Big)
\int_0^t \|u^{\varepsilon,R}(s)\|^{2p}\,ds\\
&+\frac1p\|Q^{1/2}\|_{\mathcal{L}_2^0}^{2p}(2p^2-p)^p\,t\\
&+2p\Big|
\int_0^t \|u^{\varepsilon,R}(s)\|^{2p-2}
\mathbf{1}_{[0,\tau_R]}(s)
\big\langle u^{\varepsilon,R}(s),Q^{1/2}\,dW(s)\big\rangle
\Big|.
\end{align*}
Thus, we deduce that
\begin{align*}
\mathbbm{E}\Big[\sup_{s\in[0,t]}\|u^{\varepsilon,R}(s)\|^{2p}\Big]
\le&\frac1p\|Q^{1/2}\|_{\mathcal{L}_2^0}^{2p}(2p^2-p)^pT\\
&+\Big(2\mu p\|\psi\|_{L^\infty([0,T];W^{1,\infty})}+\frac{p-1}{p}\Big)
\int_0^t \mathbbm{E}\big[\|u^{\varepsilon,R}(s)\|^{2p}\big]\,ds\\
&+2p\,\mathbbm{E}\Big[
\sup_{s\in[0,t]}
\Big|
\int_0^s \mathbf{1}_{[0,\tau_R]}(r)\|u^{\varepsilon,R}(r)\|^{2p-2}
\big\langle u^{\varepsilon,R}(r),Q^{1/2}\,dW(r)\big\rangle
\Big|
\Big].
\end{align*}
Applying the Burkholder--Davis--Gundy inequality to
$$
\mathbbm{E}\Big[\sup_{s\in[0,t]}\Big|\int_0^s \Phi(r)Q^{1/2}\,dW(r)\Big|\Big],
$$
with $\Phi(r):Q^{1/2}L^2\to\mathbbm{R}$ defined by
$$
\Phi(r)w
=
\mathbf{1}_{[0,\tau_R]}(r)\|u^{\varepsilon,R}(r)\|^{2p-2}
\big\langle u^{\varepsilon,R}(r),w\big\rangle,
\qquad w\in Q^{1/2}L^2,
$$ we obtain
\begin{align*}
&\mathbbm{E}\Big[
\sup_{s\in[0,t]}
\Big|
\int_0^s \mathbf{1}_{[0,\tau_R]}(r)\|u^{\varepsilon,R}(r)\|^{2p-2}
\big\langle u^{\varepsilon,R}(r),Q^{1/2}\,dW(r)\big\rangle
\Big|
\Big] \\
&\qquad
\le C\,\mathbbm{E}\Big[
\Big(
\int_0^t \|u^{\varepsilon,R}(s)\|^{4p-4}
\sum_k \big|\langle u^{\varepsilon,R}(s),Q^{1/2}e_k\rangle\big|^2\,ds
\Big)^{1/2}
\Big].
\end{align*}
Using the Cauchy--Schwarz inequality, we further deduce that
\begin{align*}
&\mathbbm{E}\Big[
\sup_{s\in[0,t]}
\Big|
\int_0^s \mathbf{1}_{[0,\tau_R]}(r)\|u^{\varepsilon,R}(r)\|^{2p-2}
\big\langle u^{\varepsilon,R}(r),Q^{1/2}\,dW(r)\big\rangle
\Big|
\Big] \\
&\qquad
\le C\|Q^{1/2}\|_{\mathcal{L}_2^0}
\mathbbm{E}\Big[
\Big(
\int_0^t \|u^{\varepsilon,R}(s)\|^{4p-2}\,ds
\Big)^{1/2}
\Big]\\
&\qquad\le C\|Q^{1/2}\|_{\mathcal{L}_2^0}
\mathbbm{E}\Big[
\Big(\sup_{s\in[0,t]}\|u^{\varepsilon,R}(s)\|^{2p}\Big)^{1/2}
\Big(
\int_0^t \|u^{\varepsilon,R}(s)\|^{2p-2}\,ds
\Big)^{1/2}
\Big].
\end{align*}
Applying Young's inequality, for any $\eta>0$, yields
\begin{align*}
&\mathbbm{E}\Big[
\sup_{s\in[0,t]}
\Big|
\int_0^s \mathbf{1}_{[0,\tau_R]}(r)\|u^{\varepsilon,R}(r)\|^{2p-2}
\big\langle u^{\varepsilon,R}(r),Q^{1/2}\,dW(r)\big\rangle
\Big|
\Big] \\
&\qquad
\le C\|Q^{1/2}\|_{\mathcal{L}_2^0}\eta\,
\mathbbm{E}\Big[\sup_{s\in[0,t]}\|u^{\varepsilon,R}(s)\|^{2p}\Big]
+C(\eta)\|Q^{1/2}\|_{\mathcal{L}_2^0}
\mathbbm{E}\Big[\int_0^t \|u^{\varepsilon,R}(s)\|^{2p-2}\,ds\Big].
\end{align*}
Invoking Young's inequality once again, we infer that
$$
\|u^{\varepsilon,R}(s)\|^{2p-2}
\le \frac{2p-2}{2p}\|u^{\varepsilon,R}(s)\|^{2p}
+\frac{1}{p},
$$
and hence
\begin{align*}
&\mathbbm{E}\Big[
\sup_{s\in[0,t]}
\Big|
\int_0^s \mathbf{1}_{[0,\tau_R]}(r)\|u^{\varepsilon,R}(r)\|^{2p-2}
\big\langle u^{\varepsilon,R}(r),Q^{1/2}\,dW(r)\big\rangle
\Big|
\Big] \\
&\qquad
\le C\|Q^{1/2}\|_{\mathcal{L}_2^0}\eta\,
\mathbbm{E}\Big[\sup_{s\in[0,t]}\|u^{\varepsilon,R}(s)\|^{2p}\Big]
+\frac{1}{p}C(\eta)\|Q^{1/2}\|_{\mathcal{L}_2^0}\,t \\
&\hspace{2cm}
+\frac{2p-2}{2p}C(\eta)\|Q^{1/2}\|_{\mathcal{L}_2^0}
\mathbbm{E}\Big[\int_0^t \|u^{\varepsilon,R}(s)\|^{2p}\,ds\Big].
\end{align*}
Using
$
\mathbbm{E}\big[\|u^{\varepsilon,R}(s)\|^{2p}\big]
\le
\mathbbm{E}\Big[\sup_{r\in[0,s]}\|u^{\varepsilon,R}(r)\|^{2p}\Big]$ and $t\le T,
$
we conclude that
\begin{align*}
&\mathbbm{E}\Big[
\sup_{s\in[0,t]}
\Big|
\int_0^s \mathbf{1}_{[0,\tau_R]}(r)\|u^{\varepsilon,R}(r)\|^{2p-2}
\big\langle u^{\varepsilon,R}(r),Q^{1/2}\,dW(r)\big\rangle
\Big|
\Big] \\
&\qquad
\le C\|Q^{1/2}\|_{\mathcal{L}_2^0}\eta\,
\mathbbm{E}\Big[\sup_{s\in[0,t]}\|u^{\varepsilon,R}(s)\|^{2p}\Big]
+\frac{T}{p}C(\eta)\|Q^{1/2}\|_{\mathcal{L}_2^0} \\
&\hspace{2cm}
+\frac{2p-2}{2p}C(\eta)\|Q^{1/2}\|_{\mathcal{L}_2^0}
\int_0^t \mathbbm{E}\Big[\sup_{r\in[0,s]}\|u^{\varepsilon,R}(r)\|^{2p}\Big]\,ds.
\end{align*}
We thus conclude that there exist positive constants
$$
C_1=C_1\bigl(\mu,\|\psi\|_{L^\infty([0,T];W^{1,\infty})},p,T,\|Q^{1/2}\|_{\mathcal{L}_2^0},\eta\bigr),
$$
and
$$
C_2=C_2\bigl(p,T,\eta,\|Q^{1/2}\|_{\mathcal{L}_2^0}\bigr),
$$
such that
\begin{align*}
\mathbbm{E}\Big[\sup_{s\in[0,t]}\|u^{\varepsilon,R}(s)\|^{2p}\Big]
\le C_2
&+C_1\int_0^t
\mathbbm{E}\Big[\sup_{r\in[0,s]}\|u^{\varepsilon,R}(r)\|^{2p}\Big]\,ds\\
&+C\|Q^{1/2}\|_{\mathcal{L}_2^0}\eta\,
\mathbbm{E}\Big[\sup_{s\in[0,t]}\|u^{\varepsilon,R}(s)\|^{2p}\Big].
\end{align*}
Choosing $\eta>0$ sufficiently small so that
$
C\|Q^{1/2}\|_{\mathcal{L}_2^0}\eta<1,
$
we obtain
$$
\mathbbm{E}\Big[\sup_{s\in[0,t]}\|u^{\varepsilon,R}(s)\|^{2p}\Big]
\le \widetilde{C}_2
+\widetilde{C}_1\int_0^t
\mathbbm{E}\Big[\sup_{r\in[0,s]}\|u^{\varepsilon,R}(r)\|^{2p}\Big]\,ds,
$$
where
$$
\widetilde{C}_2=\bigl(1-\eta C\|Q^{1/2}\|_{\mathcal{L}_2^0}\bigr)^{-1}C_2,
\qquad
\widetilde{C}_1=\bigl(1-\eta C\|Q^{1/2}\|_{\mathcal{L}_2^0}\bigr)^{-1}C_1.
$$
An application of the Gronwall inequality then gives
\begin{equation}\label{stocR0}
\mathbbm{E}\Big[\sup_{t\in[0,T]}\|u^{\varepsilon,R}(t)\|^{2p}\Big]
\le \widetilde{C}_2 e^{\widetilde{C}_1T},
\end{equation}
and hence
\begin{equation}\label{stoc0}
\mathbbm{E}\Big[\sup_{t\in[0,\tau_R]}\|u^\varepsilon(t)\|^{2p}\Big]
\le \widetilde{C}_2 e^{\widetilde{C}_1T}.
\end{equation}
Finally, \eqref{L2es} follows by letting $R\to\infty$ on both sides of \eqref{stoc0}; see also \cite{mao}.
The result for $p=1$ follows in a similar way.
This completes the proof of \eqref{L2es}.

We now prove the derivative estimate
\begin{equation}\label{H1es}
\sup_{\varepsilon\in(0,1)}
\mathbbm{E}\Big[\sup_{t\in[0,T]}\|\partial_xu^\varepsilon(t)\|^{2p}\Big]
<\infty,
\end{equation}
for every $p\in\mathbbm{N}$. To this end, we introduce the functional  $V:H^1\to\mathbbm{R}$ defined by
$$
V(v):=\frac12\|\partial_x v\|^2+\frac{\mu\varepsilon}{3}\int_{\mathbbm{T}}v^3\,dx.
$$
Applying It\^o's formula to $\bigl(V(u^{\varepsilon,R}(t))\bigr)^{2p}$ and using the fact that 
$$
DV(v)h=\langle \partial_x v,\partial_x h\rangle+\mu\varepsilon\langle v^2,h\rangle,
\qquad
D^2V(v)(h,k)=\langle \partial_x h,\partial_x k\rangle+2\mu\varepsilon\langle vh,k\rangle,
$$
together with the property 
$$
\big\langle \partial_xu^{\varepsilon,R}(t),\partial_x^4u^{\varepsilon,R}(t)\big\rangle=0,
$$
we obtain
\begin{align*}
&\bigl(V(u^{\varepsilon,R}(t))\bigr)^{2p}
=4\mu p\int_0^t \mathbf{1}_{[0,\tau_R]}(s)\bigl(V(u^{\varepsilon,R}(s))\bigr)^{2p-1}
\big\langle \partial_xu^{\varepsilon,R}(s),
\partial_x\bigl(\psi(s)\partial_xu^{\varepsilon,R}(s)+u^{\varepsilon,R}(s)\partial_x\psi(s)\bigr)\big\rangle\,ds\\
&\quad
+4\mu p\varepsilon\int_0^t \mathbf{1}_{[0,\tau_R]}(s)\bigl(V(u^{\varepsilon,R}(s))\bigr)^{2p-1}
\big\langle \partial_xu^{\varepsilon,R}(s),
\partial_x\bigl(u^{\varepsilon,R}(s)\partial_xu^{\varepsilon,R}(s)\bigr)\big\rangle\,ds\\
&\quad
+2p\int_0^t \mathbf{1}_{[0,\tau_R]}(s)\bigl(V(u^{\varepsilon,R}(s))\bigr)^{2p-1}
\big\langle \partial_xu^{\varepsilon,R}(s),\partial_xQ^{1/2}\,dW(s)\big\rangle\\
&\quad
+2p\mu\varepsilon\int_0^t \mathbf{1}_{[0,\tau_R]}(s)\bigl(V(u^{\varepsilon,R}(s))\bigr)^{2p-1}
\big\langle (u^{\varepsilon,R}(s))^2,Q^{1/2}\,dW(s)\big\rangle\\
&\quad
+2p\mu\varepsilon\int_0^t \mathbf{1}_{[0,\tau_R]}(s)\bigl(V(u^{\varepsilon,R}(s))\bigr)^{2p-1}
\big\langle (u^{\varepsilon,R}(s))^2,-\partial_x^3u^{\varepsilon,R}(s)\\
&\hspace{1.3cm}
+2\mu\bigl(\psi(s)\partial_xu^{\varepsilon,R}(s)
+u^{\varepsilon,R}(s)\partial_x\psi(s)\bigr)
+2\mu\varepsilon u^{\varepsilon,R}(s)\partial_xu^{\varepsilon,R}(s)
\big\rangle\,ds\\
&\quad
+p(2p-1)\int_0^t \mathbf{1}_{[0,\tau_R]}(s)\bigl(V(u^{\varepsilon,R}(s))\bigr)^{2p-2}
\sum_k\Big(
\big\langle \partial_xu^{\varepsilon,R}(s),\partial_xQ^{1/2}e_k\big\rangle+\mu\varepsilon\big\langle (u^{\varepsilon,R}(s))^2,Q^{1/2}e_k\big\rangle
\Big)^2\,ds\\
&\quad
+p\int_0^t \mathbf{1}_{[0,\tau_R]}(s)\bigl(V(u^{\varepsilon,R}(s))\bigr)^{2p-1}
\sum_k\Big(
\|\partial_xQ^{1/2}e_k\|^2
+2\mu\varepsilon\big\langle u^{\varepsilon,R}(s)Q^{1/2}e_k,Q^{1/2}e_k\big\rangle
\Big)\,ds.
\end{align*}
By integration by parts and periodicity of $\psi$ and $\partial_x u^{\varepsilon,R}$, we further have
\begin{align*}
&\big\langle \partial_xu^{\varepsilon,R}(s),
\partial_x\bigl(\psi(s)\partial_xu^{\varepsilon,R}(s)
+u^{\varepsilon,R}(s)\partial_x\psi(s)\bigr)\big\rangle\\
=&\frac32\big\langle \partial_x\psi(s),(\partial_xu^{\varepsilon,R}(s))^2\big\rangle
+\big\langle \partial_x^2\psi(s),
u^{\varepsilon,R}(s)\partial_xu^{\varepsilon,R}(s)\big\rangle,
\end{align*}
and
\begin{align*}
&\quad\big\langle \partial_xu^{\varepsilon,R}(s),
\partial_x\bigl(u^{\varepsilon,R}(s)\partial_xu^{\varepsilon,R}(s)\bigr)\big\rangle
=\frac12\big\langle \partial_xu^{\varepsilon,R}(s),
\partial_x^2\bigl((u^{\varepsilon,R}(s))^2\bigr)\big\rangle\\
&=-\frac12\big\langle \partial_x^2u^{\varepsilon,R}(s),
\partial_x\bigl((u^{\varepsilon,R}(s))^2\bigr)\big\rangle=\frac12\big\langle \partial_x^3u^{\varepsilon,R}(s),
(u^{\varepsilon,R}(s))^2\big\rangle,
\end{align*}
$$
\big\langle (u^{\varepsilon,R}(s))^2,
u^{\varepsilon,R}(s)\partial_xu^{\varepsilon,R}(s)\big\rangle=0,
$$
and
\begin{align*}
&\quad\big\langle (u^{\varepsilon,R}(s))^2,
\psi(s)\partial_xu^{\varepsilon,R}(s)\big\rangle
=-\big\langle \partial_x\bigl((u^{\varepsilon,R}(s))^2\psi(s)\bigr),
u^{\varepsilon,R}(s)\big\rangle\\
&=-\big\langle \partial_x\bigl((u^{\varepsilon,R}(s))^2\bigr)\psi(s),
u^{\varepsilon,R}(s)\big\rangle
-\big\langle (u^{\varepsilon,R}(s))^2,
u^{\varepsilon,R}(s)\partial_x\psi(s)\big\rangle.
\end{align*}
Hence,
\begin{align}
&\quad\bigl(V(u^{\varepsilon,R}(t))\bigr)^{2p}
=4\mu p\int_0^t \mathbf{1}_{[0,\tau_R]}(s)\bigl(V(u^{\varepsilon,R}(s))\bigr)^{2p-1}
\big\langle \partial_x^2\psi(s),
u^{\varepsilon,R}(s)\partial_xu^{\varepsilon,R}(s)\big\rangle\,ds \notag \\
&\quad
+6\mu p\int_0^t \mathbf{1}_{[0,\tau_R]}(s)\bigl(V(u^{\varepsilon,R}(s))\bigr)^{2p-1}
\big\langle \partial_x\psi(s),(\partial_xu^{\varepsilon,R}(s))^2\big\rangle\,ds\notag\\
&\quad
-4p\mu^2\varepsilon\int_0^t \mathbf{1}_{[0,\tau_R]}(s)\bigl(V(u^{\varepsilon,R}(s))\bigr)^{2p-1}
\big\langle \partial_x\bigl((u^{\varepsilon,R}(s))^2\bigr)\psi(s),
u^{\varepsilon,R}(s)\big\rangle\,ds\notag \\
&\quad
+2p\int_0^t \mathbf{1}_{[0,\tau_R]}(s)\bigl(V(u^{\varepsilon,R}(s))\bigr)^{2p-1}
\big\langle \partial_xu^{\varepsilon,R}(s),\partial_xQ^{1/2}\,dW(s)\big\rangle\label{nw0} \\
&\quad
+2p\mu\varepsilon\int_0^t \mathbf{1}_{[0,\tau_R]}(s)\bigl(V(u^{\varepsilon,R}(s))\bigr)^{2p-1}
\big\langle (u^{\varepsilon,R}(s))^2,Q^{1/2}\,dW(s)\big\rangle\notag \\
&\quad
+p(2p-1)\int_0^t \mathbf{1}_{[0,\tau_R]}(s)\bigl(V(u^{\varepsilon,R}(s))\bigr)^{2p-2}
\sum_k\Big(
\big\langle \partial_xu^{\varepsilon,R}(s),\partial_xQ^{1/2}e_k\big\rangle
+\mu\varepsilon\big\langle (u^{\varepsilon,R}(s))^2,Q^{1/2}e_k\big\rangle
\Big)^2\,ds\notag \\
&\quad
+p\int_0^t \mathbf{1}_{[0,\tau_R]}(s)\bigl(V(u^{\varepsilon,R}(s))\bigr)^{2p-1}
\sum_k\Big(
\|\partial_xQ^{1/2}e_k\|^2+2\mu\varepsilon\big\langle u^{\varepsilon,R}(s)Q^{1/2}e_k,Q^{1/2}e_k\big\rangle
\Big)\,ds\notag .
\end{align}
By the Cauchy--Schwarz inequality, the Gagliardo--Nirenberg inequality \eqref{gn1}
and Young's inequality with conjugate exponents $\frac43$ and $4$, we have
\begin{align}
\big\langle \partial_x^2\psi(s),
u^{\varepsilon,R}(s)\partial_xu^{\varepsilon,R}(s)\big\rangle
&\le \|u^{\varepsilon,R}(s)\|_{L^\infty}
\|\partial_x^2\psi(s)\|\,
\|\partial_xu^{\varepsilon,R}(s)\|\notag \\
&\le C\|\partial_x^2\psi(s)\|\,
\|u^{\varepsilon,R}(s)\|^{1/2}
\|\partial_xu^{\varepsilon,R}(s)\|^{3/2}\label{nw1}\\
&\le C\|\partial_x^2\psi(s)\|
\Big(
\|u^{\varepsilon,R}(s)\|^2
+\|\partial_xu^{\varepsilon,R}(s)\|^2
\Big)\notag.
\end{align}
Moreover, by integration by parts, the chain rule, and again the Gagliardo--Nirenberg inequality \eqref{gn1},
\begin{align}
&\big\langle \partial_x\bigl((u^{\varepsilon,R}(s))^2\bigr)\psi(s),
u^{\varepsilon,R}(s)\big\rangle=-\frac 23\langle u^{\varepsilon,R}(s)^3, \partial_x \psi(s)\rangle \notag\\
\le &\frac23\|\partial_x\psi(s)\|_{L^\infty}
\|u^{\varepsilon,R}(s)\|_{L^3}^3
\le \frac23 C\|\partial_x\psi(s)\|_{L^\infty}
\|\partial_xu^{\varepsilon,R}(s)\|^{1/2}
\|u^{\varepsilon,R}(s)\|^{5/2} \label{nw2}.
\end{align}
Furthermore,
\begin{align}
&\qquad\quad\sum_k \Big(
\big\langle \partial_xu^{\varepsilon,R}(s),\partial_xQ^{1/2}e_k\big\rangle
+\mu\varepsilon\big\langle (u^{\varepsilon,R}(s))^2,Q^{1/2}e_k\big\rangle
\Big)^2\notag \\
&\qquad
\le 2\sum_k \big|\big\langle \partial_xu^{\varepsilon,R}(s),\partial_xQ^{1/2}e_k\big\rangle\big|^2
+2\mu^2\varepsilon^2\sum_k \big|\big\langle (u^{\varepsilon,R}(s))^2,Q^{1/2}e_k\big\rangle\big|^2\notag \\
&\qquad
\le 2\|\partial_xu^{\varepsilon,R}(s)\|^2\|Q^{1/2}\|_{\mathcal{L}_2^1}^2
+2\mu^2\varepsilon^2\|u^{\varepsilon,R}(s)\|_{L^\infty}^2
\|u^{\varepsilon,R}(s)\|^2
\|Q^{1/2}\|_{\mathcal{L}_2^0}^2\label{nw3}\\
&\qquad
\le 2\|\partial_xu^{\varepsilon,R}(s)\|^2\|Q^{1/2}\|_{\mathcal{L}_2^1}^2
+2\mu^2\varepsilon^2
\|\partial_xu^{\varepsilon,R}(s)\|
\|u^{\varepsilon,R}(s)\|^3
\|Q^{1/2}\|_{\mathcal{L}_2^0}^2\notag,
\end{align}
and
\begin{align}
&\quad\qquad\sum_k \Big(
\|\partial_xQ^{1/2}e_k\|^2
+2\mu\varepsilon\big\langle u^{\varepsilon,R}(s)Q^{1/2}e_k,Q^{1/2}e_k\big\rangle
\Big)\notag\\
&\qquad
\le \|Q^{1/2}\|_{\mathcal{L}_2^1}^2
+2\mu\varepsilon\|u^{\varepsilon,R}(s)\|
\sum_k \|Q^{1/2}e_k\|_{L^\infty}\|Q^{1/2}e_k\|\notag\\
&\qquad
\le \|Q^{1/2}\|_{\mathcal{L}_2^1}^2
+C\mu\varepsilon\|u^{\varepsilon,R}(s)\|
\sum_k \|Q^{1/2}e_k\|^{3/2}\|Q^{1/2}e_k\|^{1/2}_{\dot{H}^1}\notag\\
&\qquad
\le \|Q^{1/2}\|_{\mathcal{L}_2^1}^2
+C\mu\varepsilon\|u^{\varepsilon,R}(s)\|
\Big(\sum_k \|Q^{1/2}e_k\|^{2}\Big)^{\frac{3}{4}}\big(\sum_k\|Q^{1/2}e_k\|^{2}_{\dot{H}^1}\Big)^{\frac{1}{4}}\label{nw4}\\
&\qquad
\le \|Q^{1/2}\|_{\mathcal{L}_2^1}^2
+C\mu\varepsilon\big\|u^{\varepsilon,R}(s)\big\|
\big\| Q^{1/2}\big\|_{\mathcal{L}_2^0}^{\frac{3}{2}}\big\| Q^{1/2}\big\|_{\mathcal{L}_2^1}^{\frac{1}{2}}\notag,
\end{align}
where we used again \eqref{gn1} and the Cauchy--Schwarz inequality.
Combining the estimates \eqref{nw1}--\eqref{nw4} into \eqref{nw0}, we arrive at
\begin{align*}
&|V(u^{\varepsilon,R}(t))|^{2p}
\le 4\mu pC\int_0^t |V(u^{\varepsilon,R}(s))|^{2p-1}
\|\partial_x^2\psi(s)\|
\Big(
\|u^{\varepsilon,R}(s)\|^2+\|\partial_xu^{\varepsilon,R}(s)\|^2
\Big)\,ds\\
&+6\mu p\int_0^t |V(u^{\varepsilon,R}(s))|^{2p-1}
\|\partial_x\psi(s)\|_{L^\infty}
\|\partial_xu^{\varepsilon,R}(s)\|^2\,ds\\
&+4\mu^2 pC\varepsilon\int_0^t |V(u^{\varepsilon,R}(s))|^{2p-1}
\|\partial_x\psi(s)\|_{L^\infty}
\|\partial_xu^{\varepsilon,R}(s)\|^{1/2}
\|u^{\varepsilon,R}(s)\|^{5/2}\,ds\\
&+2p\Big|\int_0^t \mathbf{1}_{[0,\tau_R]}(s)V(u^{\varepsilon,R}(s))^{2p-1}
\big\langle \partial_xu^{\varepsilon,R}(s),\partial_xQ^{1/2}\,dW(s)\big\rangle\Big|\\
&+2p\mu\varepsilon\Big|\int_0^t \mathbf{1}_{[0,\tau_R]}(s)V(u^{\varepsilon,R}(s))^{2p-1}
\big\langle (u^{\varepsilon,R}(s))^2,Q^{1/2}\,dW(s)\big\rangle\Big|\\
&+2p(2p-1)C\int_0^t |V(u^{\varepsilon,R}(s))|^{2p-2}
\Big(
\|\partial_xu^{\varepsilon,R}(s)\|^2\|Q^{1/2}\|_{\mathcal{L}_2^1}^2+\mu^2\varepsilon^2\|\partial_xu^{\varepsilon,R}(s)\|
\|u^{\varepsilon,R}(s)\|^3
\|Q^{1/2}\|_{\mathcal{L}_2^0}^2
\Big)\,ds\\
&+p\int_0^t |V(u^{\varepsilon,R}(s))|^{2p-1}
\Big(
\|Q^{1/2}\|_{\mathcal{L}_2^1}^2
+C\mu\varepsilon\big\|u^{\varepsilon,R}(s)\big\|
\big\| Q^{1/2}\big\|_{\mathcal{L}_2^0}^{\frac{3}{2}}\big\| Q^{1/2}\big\|_{\mathcal{L}_2^1}^{\frac{1}{2}}\Big)
\,ds.
\end{align*}

Moreover, by the definition of $V$, the Gagliardo-Nirenberg inequality \eqref{gn2}, Young's inequality, and the fact that $\varepsilon\in(0,1)$, we have
\begin{align*}
\|\partial_xu^{\varepsilon,R}(s)\|^2
&=2V(u^{\varepsilon,R}(s))
-\frac{2\mu\varepsilon}{3}\int_{\mathbbm{T}}(u^{\varepsilon,R}(s))^3\,dx\\
&\le 2|V(u^{\varepsilon,R}(s))|
+2\frac{\mu}{3}\|u^{\varepsilon,R}(s)\|_{L^3}^3\\
&\le 2|V(u^{\varepsilon,R}(s))|
+2\frac{\mu}{3}C
\|\partial_xu^{\varepsilon,R}(s)\|^{1/2}
\|u^{\varepsilon,R}(s)\|^{5/2}\\
&\le 2|V(u^{\varepsilon,R}(s))|
+\frac{\mu}{3} C
\Big(
\frac{\eta_1}{2}\|\partial_xu^{\varepsilon,R}(s)\|^2
+\eta_1^{-1/3}\frac32\|u^{\varepsilon,R}(s)\|^{10/3}
\Big),
\end{align*}
for any $\eta_1>0$.
Then, choosing $\eta_1$ such that $\frac{\eta_1\mu C}{6}<1$, 
we deduce that
\begin{equation}\label{grad_est}
\|\partial_xu^{\varepsilon,R}(s)\|^2
\le C\Big(
|V(u^{\varepsilon,R}(s))|
+\|u^{\varepsilon,R}(s)\|^{10/3}
\Big).
\end{equation}
Using \eqref{grad_est} and Young's inequality once again, we infer
\begin{align*}
&\quad\qquad|V(u^{\varepsilon,R}(s))|^{2p-1}
\|\partial_xu^{\varepsilon,R}(s)\|^{1/2}
\|u^{\varepsilon,R}(s)\|^{5/2}\\
&\qquad
\le |V(u^{\varepsilon,R}(s))|^{2p-1}
\Big(
\frac14\|\partial_xu^{\varepsilon,R}(s)\|^2
+\frac34\|u^{\varepsilon,R}(s)\|^{10/3}
\Big)\\
&\qquad
\le \frac{C}{4}|V(u^{\varepsilon,R}(s))|^{2p}
+\frac{C+3}{4}
|V(u^{\varepsilon,R}(s))|^{2p-1}
\|u^{\varepsilon,R}(s)\|^{10/3}\\
&\qquad
\le \frac{1}{8p}
\Big(
2pC+(2p-1)(C+3)
\Big)
|V(u^{\varepsilon,R}(s))|^{2p}
+\frac{C+3}{8p}
\|u^{\varepsilon,R}(s)\|^{20p/3},
\end{align*}
and
\begin{align*}
&\quad\qquad|V(u^{\varepsilon,R}(s))|^{2p-2}
\|\partial_xu^{\varepsilon,R}(s)\|
\|u^{\varepsilon,R}(s)\|^3\\
&\qquad
\le \frac12|V(u^{\varepsilon,R}(s))|^{2p-2}
\Big(
\|\partial_xu^{\varepsilon,R}(s)\|^2
+\|u^{\varepsilon,R}(s)\|^6
\Big)\\
&\qquad
\le \frac12|V(u^{\varepsilon,R}(s))|^{2p-2}
\Big[
C|V(u^{\varepsilon,R}(s))|
+C\|u^{\varepsilon,R}(s)\|^{10/3}
\Big]
+\frac{p-1}{2p}|V(u^{\varepsilon,R}(s))|^{2p}
+\frac{1}{2p}\|u^{\varepsilon,R}(s)\|^{6p}\\
&\qquad
\le \frac{C(4p-3)+2p-2}{4p}|V(u^{\varepsilon,R}(s))|^{2p}
+\frac{1}{2p}\|u^{\varepsilon,R}(s)\|^{6p}+\frac{1}{2p}C\|u^{\varepsilon,R}(s)\|^{10p/3}
+\frac{1}{4p}C.
\end{align*}
Then, using all the above bounds together with Young's inequality and the estimates
$$
\int_0^t |V(u^{\varepsilon,R}(s))|^{2p-1}\,ds
\le \frac{2p-1}{2p}\int_0^t |V(u^{\varepsilon,R}(s))|^{2p}\,ds+\frac{t}{2p}
$$
and
$$
\int_0^t |V(u^{\varepsilon,R}(s))|^{2p-2}
\|u^{\varepsilon,R}(s)\|^{10/3}\,ds
\le \frac{2p-2}{2p}\int_0^t |V(u^{\varepsilon,R}(s))|^{2p}\,ds
+\frac1p\int_0^t \|u^{\varepsilon,R}(s)\|^{10p/3}\,ds,
$$
we derive
\begin{align*}
|V(u^{\varepsilon,R}(t))|^{2p}
\le &C\int_0^t a(s)|V(u^{\varepsilon,R}(s))|^{2p}\,ds+C\int_0^t b(s)\,ds\\
&+2p\Big|\int_0^t \mathbf{1}_{[0,\tau_R]}(s)V(u^{\varepsilon,R}(s))^{2p-1}
\big\langle \partial_xu^{\varepsilon,R}(s),\partial_xQ^{1/2}\,dW(s)\big\rangle\Big|\\
&+2p\mu\varepsilon\Big|\int_0^t \mathbf{1}_{[0,\tau_R]}(s)V(u^{\varepsilon,R}(s))^{2p-1}
\big\langle (u^{\varepsilon,R}(s))^2,Q^{1/2}\,dW(s)\big\rangle\Big|,
\end{align*}
where
$$
a(s):=1+\|\partial_x\psi(s)\|_{L^\infty}+\|\partial_x^2\psi(s)\|,
$$
and
\begin{align*}
b(s):={}&
\|\partial_x^2\psi(s)\|\|u^{\varepsilon,R}(s)\|^{4p}
+\|\partial_x^2\psi(s)\|\|u^{\varepsilon,R}(s)\|^{20p/3}\\
&+\|\partial_x\psi(s)\|_{L^\infty}\|u^{\varepsilon,R}(s)\|^{20p/3}
+\|u^{\varepsilon,R}(s)\|^{6p}
+\|u^{\varepsilon,R}(s)\|^{10p/3}
+\|u^{\varepsilon,R}(s)\|^{2p}+1.
\end{align*}
Thus, also using  \eqref{L2es}, we deduce
\begin{align*}
\mathbbm{E}\Big[\sup_{s\in[0,t]}|V(u^{\varepsilon,R}(s))|^{2p}\Big]
\le & C_1\int_0^t
\mathbbm{E}\Big[\sup_{r\in[0,s]}|V(u^{\varepsilon,R}(r))|^{2p}\Big]\,ds
+C_2\\
&+2p\,\mathbbm{E}\Big[
\sup_{s\in[0,t]}
\Big|
\int_0^s \mathbf{1}_{[0,\tau_R]}(r)V(u^{\varepsilon,R}(r))^{2p-1}
\big\langle \partial_xu^{\varepsilon,R}(r),\partial_xQ^{1/2}\,dW(r)\big\rangle
\Big|
\Big]\\
&+2p\mu\varepsilon\,\mathbbm{E}\Big[
\sup_{s\in[0,t]}
\Big|
\int_0^s \mathbf{1}_{[0,\tau_R]}(r)V(u^{\varepsilon,R}(r))^{2p-1}
\big\langle (u^{\varepsilon,R}(r))^2,Q^{1/2}\,dW(r)\big\rangle
\Big|
\Big].
\end{align*}
For simplicity, let
$$
M_1(s):=\int_0^s \mathbf{1}_{[0,\tau_R]}(r)
V(u^{\varepsilon,R}(r))^{2p-1}
\big\langle \partial_x u^{\varepsilon,R}(r), \partial_x Q^{1/2}\,dW(r)\big\rangle.
$$
By the Burkholder--Davis--Gundy inequality and the Cauchy--Schwarz inequality, we have
\begin{align*}
\mathbbm{E}\Big[\sup_{s\in[0,t]}|M_1(s)|\Big]
&\le
C \mathbbm{E}\Big[
\Big(
\int_0^t
|V(u^{\varepsilon,R}(s))|^{4p-2}
\sum_k
\big|
\langle \partial_x u^{\varepsilon,R}(s), \partial_x Q^{1/2}e_k\rangle
\big|^2\,ds
\Big)^{1/2}
\Big] \\
&\le
C \|Q^{1/2}\|_{\mathcal{L}_2^1}
\mathbbm{E}\Big[
\Big(
\int_0^t
|V(u^{\varepsilon,R}(s))|^{4p-2}
\|\partial_x u^{\varepsilon,R}(s)\|^2\,ds
\Big)^{1/2}
\Big].
\end{align*}
Using \eqref{grad_est}, we further obtain
\begin{align*}
\mathbbm{E}\Big[\sup_{s\in[0,t]}|M_1(s)|\Big]
&\le
C\|Q^{1/2}\|_{\mathcal{L}_2^1}
\bigg\{
\mathbbm{E}\Big[
\Big(
\int_0^t
|V(u^{\varepsilon,R}(s))|^{4p-1}\,ds
\Big)^{1/2}
\Big] \\
&\hspace{4cm}
+
\mathbbm{E}\Big[
\Big(
\int_0^t
|V(u^{\varepsilon,R}(s))|^{4p-2}
\|u^{\varepsilon,R}(s)\|^{10/3}\,ds
\Big)^{1/2}
\Big]
\bigg\}.
\end{align*}
Then, we get
\begin{align*}
\mathbbm{E}\Big[\sup_{s\in[0,t]}|M_1(s)|\Big]
&\le
C\|Q^{1/2}\|_{\mathcal{L}_2^1}
\mathbbm{E}\Big[
\Big(
\sup_{r\in[0,t]} |V(u^{\varepsilon,R}(r))|^{2p}
\Big)^{1/2} \\
&\hspace{2cm}\times
\Big\{
\Big(
\int_0^t |V(u^{\varepsilon,R}(s))|^{2p-1}\,ds
\Big)^{1/2}
+
\Big(
\int_0^t
|V(u^{\varepsilon,R}(s))|^{2p-2}
\|u^{\varepsilon,R}(s)\|^{10/3}\,ds
\Big)^{1/2}
\Big\}
\Big].
\end{align*}
Applying Young's inequality, for any $\eta_2>0$, yields
\begin{align*}
\mathbbm{E}\Big[\sup_{s\in[0,t]}|M_1(s)|\Big]
&\le
C\|Q^{1/2}\|_{\mathcal{L}_2^1}\eta_2
\mathbbm{E}\Big[
\sup_{s\in[0,t]} |V(u^{\varepsilon,R}(s))|^{2p}
\Big] \\
&\quad
+C(\eta_2)\|Q^{1/2}\|_{\mathcal{L}_2^1}
\mathbbm{E}\Big[
\int_0^t |V(u^{\varepsilon,R}(s))|^{2p-1}\,ds
\Big] \\
&\quad
+C(\eta_2)\|Q^{1/2}\|_{\mathcal{L}_2^1}
\mathbbm{E}\Big[
\int_0^t
|V(u^{\varepsilon,R}(s))|^{2p-2}
\|u^{\varepsilon,R}(s)\|^{10/3}\,ds
\Big].
\end{align*}
Invoking Young's inequality once more, we infer that
\begin{align*}
|V(u^{\varepsilon,R}(s))|^{2p-1}
&\le
\frac{2p-1}{2p}|V(u^{\varepsilon,R}(s))|^{2p}
+\frac{1}{2p},\\
|V(u^{\varepsilon,R}(s))|^{2p-2}
\|u^{\varepsilon,R}(s)\|^{10/3}
&\le
\frac{2p-2}{2p}|V(u^{\varepsilon,R}(s))|^{2p}
+\frac{1}{p}\|u^{\varepsilon,R}(s)\|^{10p/3},
\end{align*}
and therefore
\begin{align*}
\mathbbm{E}\Big[\sup_{s\in[0,t]}|M_1(s)|\Big]
&\le
C\|Q^{1/2}\|_{\mathcal{L}_2^1}\eta_2
\mathbbm{E}\Big[
\sup_{s\in[0,t]} |V(u^{\varepsilon,R}(s))|^{2p}
\Big] \\
&\quad
+\frac{2p-1}{2p}C(\eta_2)\|Q^{1/2}\|_{\mathcal{L}_2^1}
\int_0^t
\mathbbm{E}\Big[
|V(u^{\varepsilon,R}(s))|^{2p}
\Big]\,ds \\
&\quad
+\frac{t}{2p}C(\eta_2)\|Q^{1/2}\|_{\mathcal{L}_2^1} \\
&\quad
+\frac{2p-2}{2p}C(\eta_2)\|Q^{1/2}\|_{\mathcal{L}_2^1}
\int_0^t
\mathbbm{E}\Big[
|V(u^{\varepsilon,R}(s))|^{2p}
\Big]\,ds \\
&\quad
+\frac{1}{p}C(\eta_2)\|Q^{1/2}\|_{\mathcal{L}_2^1}
\int_0^t
\mathbbm{E}\Big[
\|u^{\varepsilon,R}(s)\|^{10p/3}
\Big]\,ds.
\end{align*}

Similarly, let
$$
M_2(s):=\int_0^s \mathbf{1}_{[0,\tau_R]}(r)
V(u^{\varepsilon,R}(r))^{2p-1}
\big\langle (u^{\varepsilon,R}(r))^2, Q^{1/2}\,dW(r)\big\rangle.
$$
By the Burkholder--Davis--Gundy inequality and the Cauchy--Schwarz inequality,
\begin{align*}
\mathbbm{E}\Big[\sup_{s\in[0,t]}|M_2(s)|\Big]
&\le
C \mathbbm{E}\Big[
\Big(
\int_0^t
|V(u^{\varepsilon,R}(s))|^{4p-2}
\sum_k
\big|
\langle (u^{\varepsilon,R}(s))^2, Q^{1/2}e_k\rangle
\big|^2\,ds
\Big)^{1/2}
\Big] \\
&\le
C \mathbbm{E}\Big[
\Big(
\int_0^t
|V(u^{\varepsilon,R}(s))|^{4p-2}
\|u^{\varepsilon,R}(s)\|^4
\sum_k \|Q^{1/2}e_k\|_{L^\infty}^2\,ds
\Big)^{1/2}
\Big] \\
&\le
C \|Q^{1/2}\|_{\mathcal{L}_2^1}
\mathbbm{E}\Big[
\Big(
\int_0^t
|V(u^{\varepsilon,R}(s))|^{4p-2}
\|u^{\varepsilon,R}(s)\|^4\,ds
\Big)^{1/2}
\Big].
\end{align*}
Applying Young's inequality, we obtain
\begin{align*}
\mathbbm{E}\Big[\sup_{s\in[0,t]}|M_2(s)|\Big]
&\le
C \|Q^{1/2}\|_{\mathcal{L}_2^1}
\mathbbm{E}\Big[
\Big(
\sup_{r\in[0,t]} |V(u^{\varepsilon,R}(r))|^{2p}
\Big)^{1/2}
\Big(
\int_0^t
|V(u^{\varepsilon,R}(s))|^{2p-2}
\|u^{\varepsilon,R}(s)\|^4\,ds
\Big)^{1/2}
\Big] \\
&\le
C\eta_2 \|Q^{1/2}\|_{\mathcal{L}_2^1}
\mathbbm{E}\Big[
\sup_{s\in[0,t]} |V(u^{\varepsilon,R}(s))|^{2p}
\Big] \\
&\quad
+C(\eta_2)\|Q^{1/2}\|_{\mathcal{L}_2^1}
\mathbbm{E}\Big[
\int_0^t
|V(u^{\varepsilon,R}(s))|^{2p-2}
\|u^{\varepsilon,R}(s)\|^4\,ds
\Big].
\end{align*}
Using Young's inequality once again, we have
$$
|V(u^{\varepsilon,R}(s))|^{2p-2}
\|u^{\varepsilon,R}(s)\|^4
\le
\frac{2p-2}{2p}|V(u^{\varepsilon,R}(s))|^{2p}
+\frac{1}{p}\|u^{\varepsilon,R}(s)\|^{4p},
$$
which implies
\begin{align*}
\mathbbm{E}\Big[\sup_{s\in[0,t]}|M_2(s)|\Big]
&\le
C\eta_2 \|Q^{1/2}\|_{\mathcal{L}_2^1}
\mathbbm{E}\Big[
\sup_{s\in[0,t]} |V(u^{\varepsilon,R}(s))|^{2p}
\Big] \\
&\quad
+\frac{2p-2}{2p}C(\eta_2)\|Q^{1/2}\|_{\mathcal{L}_2^1}
\int_0^t
\mathbbm{E}\Big[
|V(u^{\varepsilon,R}(s))|^{2p}
\Big]\,ds \\
&\quad
+\frac{1}{p}C(\eta_2)\|Q^{1/2}\|_{\mathcal{L}_2^1}
\int_0^t
\mathbbm{E}\Big[
\|u^{\varepsilon,R}(s)\|^{4p}
\Big]\,ds.
\end{align*}
Hence, choosing $\eta_2$ sufficiently small depending on $\|Q^{1/2}\|_{\mathcal{L}_2^1}$, $p$, and $\mu$, we arrive at
$$
\mathbbm{E}\Big[\sup_{s\in[0,t]}|V(u^{\varepsilon,R}(s))|^{2p}\Big]
\le
C\int_0^t
\mathbbm{E}\Big[\sup_{r\in[0,s]}|V(u^{\varepsilon,R}(r))|^{2p}\Big]\,ds
+C.
$$
An application of Gronwall's inequality then gives
\begin{equation}\label{VR_bound}
\mathbbm{E}\Big[\sup_{t\in[0,T]}|V(u^{\varepsilon,R}(t))|^{2p}\Big]
\le
C\bigl(
p,\mu,\|\psi\|_{L^\infty(0,T;H^2)},
\|Q^{1/2}\|_{\mathcal{L}_2^1},T
\bigr)
<\infty.
\end{equation}
Combining \eqref{grad_est}, \eqref{VR_bound}, and \eqref{L2es}, we further obtain
\begin{align*}
\mathbbm{E}\Big[\sup_{t\in[0,T]}\|\partial_xu^{\varepsilon,R}(t)\|^{2p}\Big]
&\le C\,\mathbbm{E}\Big[\sup_{t\in[0,T]}|V(u^{\varepsilon,R}(t))|^{2p}\Big]
+C\,\mathbbm{E}\Big[\sup_{t\in[0,T]}\|u^{\varepsilon,R}(t)\|^{20p/3}\Big]\le C,
\end{align*}
 Equivalently,
$$
\mathbbm{E}\Big[\sup_{t\in[0,\tau_R]}\|\partial_xu^\varepsilon(t)\|^{2p}\Big]\le C.
$$
Finally, since $\tau_R\to T$ almost surely as $R\to\infty$ and
$$
\sup_{t\in[0,\tau_R]}\|\partial_xu^\varepsilon(t)\|^{2p}
\to
\sup_{t\in[0,T]}\|\partial_xu^\varepsilon(t)\|^{2p}
\qquad\text{a.s.,}
$$
the monotone convergence theorem yields
$$
\mathbbm{E}\Big[\sup_{t\in[0,T]}\|\partial_xu^\varepsilon(t)\|^{2p}\Big]\le C.
$$
The constant $C$ is independent of $\varepsilon \in (0,1)$, so \eqref{H1es} holds. This completes the proof.

\subsection{Proof of Lemma \ref{time_reg_est}}
\label{app_time_reg}
By \eqref{iso_twi}, we give the result for the twisted variable $\tilde\chi$. We first note that, by Remark \ref{wp_stoc} and Lemma \ref{lem_H1b}, the assumptions on the initial data imply
\begin{equation}
\label{reg_n0}
\|\tilde \psi\|_{\mathbbm{L}_T^{\infty,2}}<\infty,
\qquad
\|\tilde{\chi}\|_{\mathcal{L}_T^{\infty,p,1}}<\infty,
\qquad p\in \mathbbm{N}.
\end{equation}

Next, using the mild formulation \eqref{mild_sol_twis_stoc}, we write
$$
\tilde{\chi}(t+s)-\tilde{\chi}(t)
=
2\mu\int_0^s {\rm e}^{(t+r)\partial_x^3}\partial_x
\Bigl(
{\rm e}^{-(t+r)\partial_x^3}\tilde\psi(t+r)\,
{\rm e}^{-(t+r)\partial_x^3}\tilde{\chi}(t+r)
\Bigr)\,dr
+\int_t^{t+s}{\rm e}^{r\partial_x^3}Q^{1/2}\,dW(r).
$$
By Assumption \eqref{ass_zero_mode} and then \eqref{hom_b1}, it is suffices to estimate $\mathbbm{E}\|\tilde{\chi}(t+s)-\tilde{\chi}(t)\|_{\dot{H}^1}^2$.

An application of It\^o isometry gives
\begin{align}
\mathbbm{E}\|\tilde{\chi}(t+s)-\tilde{\chi}(t)\|_{\dot{H}^1}^2
&\le C
\mathbbm{E}\Bigl\|
\int_0^s {\rm e}^{(t+r)\partial_x^3}\partial_x
\Bigl(
{\rm e}^{-(t+r)\partial_x^3}\tilde\psi(t+r)\,
{\rm e}^{-(t+r)\partial_x^3}\tilde{\chi}(t+r)
\Bigr)\,dr
\Bigr\|_{\dot{H}^1}^2\notag \\
&\quad
+
C\mathbbm{E}\Bigl\|
\int_t^{t+s}{\rm e}^{r\partial_x^3}Q^{1/2}\,dW(r)
\Bigr\|_{\dot{H}^1}^2
\notag\\
&\le C
\mathbbm{E}\Bigl\|
\int_0^s {\rm e}^{(t+r)\partial_x^3}\partial_x
\Bigl(
{\rm e}^{-(t+r)\partial_x^3}\tilde\psi(t+r)\,
{\rm e}^{-(t+r)\partial_x^3}\tilde{\chi}(t+r)
\Bigr)\,dr
\Bigr\|_{\dot{H}^1}^2
+
C\int_t^{t+s}\|{\rm e}^{r\partial_x^3}Q^{1/2}\|_{\mathcal{L}_2^1}^2\,dr
\notag\\
&\le C
\mathbbm{E}\Bigl\|
\int_0^s {\rm e}^{(t+r)\partial_x^3}\partial_x
\Bigl(
{\rm e}^{-(t+r)\partial_x^3}\tilde\psi(t+r)\,
{\rm e}^{-(t+r)\partial_x^3}\tilde{\chi}(t+r)
\Bigr)\,dr
\Bigr\|_{\dot{H}^1}^2
+ C
s\|Q^{1/2}\|_{\mathcal{L}_2^1}^2.
\label{n2}
\end{align}

%By \eqref{hom_b1}, the above $H^1$-norm can be bounded by estimating the $\dot{H}^1$-norm of the same integral.
We now have to bound the  $\dot{H}^1$-norm of the above integral term.
To this end, we exploit the oscillatory structure of the integrand and perform an integration by parts in Fourier space.
An integration by parts here yields 
\begin{align}
&\Bigl\|
\int_0^s {\rm e}^{(t+r)\partial_x^3}\partial_x
\Bigl(
{\rm e}^{-(t+r)\partial_x^3}\tilde\psi(t+r)\,
{\rm e}^{-(t+r)\partial_x^3}\tilde{\chi}(t+r)
\Bigr)\,dr
\Bigr\|_{\dot H^1}\notag
\\
&\qquad =
\frac{1}{2\pi}
\Biggl\|
\sum_{\ell_1,\ell_2\ne 0}
i(\ell_1+\ell_2)\,{\rm e}^{i(\ell_1+\ell_2)x}
\int_0^s
{\rm e}^{-3i(t+r)\ell_1\ell_2(\ell_1+\ell_2)}
\hat{\tilde \psi}_{\ell_1}(t+r)\hat{\tilde \chi}_{\ell_2}(t+r)\,dr
\Biggr\|_{\dot H^1}\notag
\\
&\qquad =
\frac{1}{2\pi}
\Biggl\|
\sum_{\substack{\ell_1,\ell_2\ne 0\\\ell_1+\ell_2\ne 0}}
i(\ell_1+\ell_2)\,{\rm e}^{i(\ell_1+\ell_2)x}
\int_0^s
\frac{d}{dr}
\Bigl(
\frac{{\rm e}^{-3i(t+r)\ell_1\ell_2(\ell_1+\ell_2)}}{-3i\ell_1\ell_2(\ell_1+\ell_2)}
\Bigr)
\hat{\tilde \psi}_{\ell_1}(t+r)\hat{\tilde \chi}_{\ell_2}(t+r)\,dr
\Biggr\|_{\dot H^1}\notag \\
&\qquad \le
\frac{1}{2\pi}
\Biggl\|
\sum_{\substack{\ell_1,\ell_2\ne 0\\\ell_1+\ell_2\ne 0}}
\frac{1}{3\ell_1\ell_2}
{\rm e}^{-3it\ell_1\ell_2(\ell_1+\ell_2)}
\bigl({\rm e}^{-3is\ell_1\ell_2(\ell_1+\ell_2)}-1\bigr)
\hat{\tilde\psi}_{\ell_1}(t+s)\hat{\tilde \chi}_{\ell_2}(t+s)
{\rm e}^{i(\ell_1+\ell_2)x}
\Biggr\|_{\dot H^1}
\notag\\
&\qquad\quad +
\frac{1}{2\pi}
\Biggl\|
\sum_{\substack{\ell_1,\ell_2\ne 0\\\ell_1+\ell_2\ne 0}}
\frac{1}{3\ell_1\ell_2}
{\rm e}^{-3it\ell_1\ell_2(\ell_1+\ell_2)}
\Bigl(
\hat{\tilde\psi}_{\ell_1}(t+s)\hat{\tilde \chi}_{\ell_2}(t+s)
-\hat{\tilde\psi}_{\ell_1}(t)\hat{\tilde \chi}_{\ell_2}(t)
\Bigr)
{\rm e}^{i(\ell_1+\ell_2)x}
\Biggr\|_{\dot H^1}
\label{t1}\\
&\qquad\quad +
\frac{1}{2\pi}
\Biggl\|
\int_0^s
\sum_{\substack{\ell_1,\ell_2\ne 0\\\ell_1+\ell_2\ne 0}}
\frac{1}{3\ell_1\ell_2}
{\rm e}^{-3i(t+r)\ell_1\ell_2(\ell_1+\ell_2)}
\,d\Bigl(
\hat{\tilde\psi}_{\ell_1}(t+r)\hat{\tilde \chi}_{\ell_2}(t+r)
\Bigr)
{\rm e}^{i(\ell_1+\ell_2)x}
\Biggr\|_{\dot H^1},\notag
\end{align}
where we also used the identity
$$
\int_0^s \dot f(r)\,g(r)\,dr
=
f(s)g(s)-f(0)g(0)-\int_0^s f(r)\,dg(r),
$$
with
$$
f(r)=\frac{{\rm e}^{-3i(t+r)\ell_1\ell_2(\ell_1+\ell_2)}}{-3i\ell_1\ell_2(\ell_1+\ell_2)},
\qquad
g(r)=\hat \tilde\psi_{\ell_1}(t+r)\hat{\tilde \chi}_{\ell_2}(t+r), \quad \ell_1,\ell_2\ne 0, \ell_1+\ell_2\ne 0.
$$

For the first term in \eqref{t1}, arguing as in the proof of Lemma 2.5 of \cite{kat0}, and using \eqref{bil_est} together with \eqref{reg_n0}, we claim
\begin{align}
\label{t4}
&\frac{1}{2\pi}
\Biggl\|
\sum_{\substack{\ell_1,\ell_2\ne 0 \\ \ell_1+\ell_2\ne 0}}
\frac{1}{3\ell_1\ell_2}
{\rm e}^{-3it\ell_1\ell_2(\ell_1+\ell_2)}
\bigl({\rm e}^{-3is\ell_1\ell_2(\ell_1+\ell_2)}-1\bigr)
\hat{\tilde\psi}_{\ell_1}(t+s)\hat{\tilde \chi}_{\ell_2}(t+s)
{\rm e}^{i(\ell_1+\ell_2)x}
\Biggr\|_{\dot H^1}\notag\\
\lesssim &
\sqrt{s}\, \|\tilde\psi(t+s)\|_{H^1}\,\|\tilde \chi(t+s)\|_{H^1}.
\end{align}
Indeed, by the definition of $\dot{H}^1$-norm, the unitary property of ${\rm e}^{t \partial_x^3}$ and a standard interpolation argument yield
\begin{align*}
   & \Bigg\|\sum_{\substack{\ell_1,\ell_2\ne 0 \\ \ell_1+\ell_2\ne 0}}
\frac{1}{3\ell_1\ell_2}
{\rm e}^{-3it\ell_1\ell_2(\ell_1+\ell_2)}
\bigl({\rm e}^{-3is\ell_1\ell_2(\ell_1+\ell_2)}-1\bigr)
\hat{\tilde\psi}_{\ell_1}(t+s)\hat{\tilde \chi}_{\ell_2}(t+s)
{\rm e}^{i(\ell_1+\ell_2)x}\Bigg\|_{\dot{H}^1}\\
&\quad = \Bigg\{\displaystyle\sum_{\ell \ne 0}\ell^2 \Bigg(\sum_{\substack{\ell_1,\ell_2,\ne 0\\ \ell_1+\ell_2=\ell}}\frac{1}{3\ell_1\ell_2}
{\rm e}^{-3it\ell_1\ell_2(\ell_1+\ell_2)}
\bigl({\rm e}^{-3is\ell_1\ell_2(\ell_1+\ell_2)}-1\bigr)
\hat{\tilde\psi}_{\ell_1}(t+s)\hat{\tilde \chi}_{\ell_2}(t+s)\Bigg)^2\Bigg\}^{1/2}\\
&\quad \lesssim\sqrt{s}\Bigg\{\displaystyle\sum_{\ell \ne 0}\ell^2 \Bigg(\sum_{\substack{\ell_1,\ell_2,\ne 0\\ \ell_1+\ell_2=\ell}}
\displaystyle\frac{|\ell_1+\ell_2|^{1/2}}{|\ell_1\ell_2|^{1/2}}
\big|\hat{\tilde\psi}_{\ell_1}(t+s)\big| \big|\hat{\tilde \chi}_{\ell_2}(t+s)\big|\Bigg)^2\Bigg\}^{1/2}\\
&\quad \lesssim \sqrt{s} \sup_{\ell_1,\ell_2\ne 0}\displaystyle\frac{|\ell_1+\ell_2|^{1/2}}{|\ell_1 \ell_2|^{1/2}}\big\|\tilde{\psi}(t+s) \tilde\chi(t+s) \big\|_{\dot{H}^1}.
\end{align*}
The claim \eqref{t4} follows because the $\dot{H}^1$-norm is controlled by the $H^1$-norm, together with the bilinear estimate \eqref{bil_est}. H\"older's inequality and \eqref{reg_n0} then give
\begin{equation}
    \label{e0}
    \mathbbm{E}\Bigg[\Bigg\|\sum_{\substack{\ell_1,\ell_2\ne 0\\\ell_1+\ell_2\ne 0}}
\frac{1}{3\ell_1\ell_2}
{\rm e}^{-3it\ell_1\ell_2(\ell_1+\ell_2)}
\bigl({\rm e}^{-3is\ell_1\ell_2(\ell_1+\ell_2)}-1\bigr)
\hat{\tilde\psi}_{\ell_1}(t+s)\hat{\tilde \chi}_{\ell_2}(t+s)
{\rm e}^{i(\ell_1+\ell_2)x}\Bigg\|^2_{H^1}\Bigg]\lesssim s.
\end{equation}
We next consider the second term in \eqref{t1}. In what follows, it is also important to recall that some of the computations below presented may be rigorously justified by considering standard Galerkin approximations of the linearized stochastic and deterministic equations and then proving that the resulting bounds are independent of the Galerkin truncation parameter. Since this is a standard procedure in the analysis of stochastic PDEs, we shall not discuss it further here; see, for example, \cite{deb_single}).

By the additivity of the noise and the chain rule, we have
\begin{align*}
\hat{\tilde \psi}_{\ell_1}(t+s)\hat{\tilde \chi}_{\ell_2}(t+s)
-\hat{\tilde \psi}_{\ell_1}(t)\hat{\tilde \chi}_{\ell_2}(t)
&=
\int_t^{t+s}\hat{\tilde \chi}_{\ell_2}(r)\frac{d}{dr}\hat{\tilde\psi}_{\ell_1}(r)\,dr
+
\int_t^{t+s}\hat{\tilde\psi}_{\ell_1}(r)\,d\hat{\tilde \chi}_{\ell_2}(r),
\end{align*}
where
\begin{align*}
\frac{d}{dt}\hat{\tilde\psi}_{\ell}(t)
&=
\frac{\mu}{\sqrt{2\pi}}
\sum_{\ell_1+\ell_2=\ell}
i\ell\,{\rm e}^{-it(\ell^3-\ell_1^3-\ell_2^3)}
\hat{\tilde\psi}_{\ell_1}(t)\hat{\tilde\psi}_{\ell_2}(t),
\\
d\hat{\tilde \chi}_{\ell}(t)
&=
\sqrt{\frac{2}{\pi}}\mu
\sum_{\ell_1+\ell_2=\ell}
i\ell\,{\rm e}^{-it(\ell^3-\ell_1^3-\ell_2^3)}
\hat{\tilde\psi}_{\ell_1}(t)\hat{\tilde \chi}_{\ell_2}(t)\,dt
+
\sum_k\big\langle {\rm e}^{t\partial_x^3}Q^{1/2}e_k,{\rm e}^{i\ell x}\big\rangle\,d\beta_k(t).
\end{align*}
Substituting these identities into the second term in \eqref{t1}, we obtain
\begin{align}
&
\frac{1}{2\pi}
\Biggl\|
\sum_{\substack{\ell_1,\ell_2 \ne 0 \\ \ell_1+\ell_2\ne 0}}
\frac{1}{3\ell_1\ell_2}
{\rm e}^{-3it\ell_1\ell_2(\ell_1+\ell_2)}
\Bigl(
\hat{\tilde\psi}_{\ell_1}(t+s)\hat{\tilde \chi}_{\ell_2}(t+s)
-\hat{\tilde\psi}_{\ell_1}(t)\hat{\tilde \chi}_{\ell_2}(t)
\Bigr)
{\rm e}^{i(\ell_1+\ell_2)x}
\Biggr\|_{\dot H^1}
\notag\\
&\hspace{3cm}\lesssim \mathcal{A}+\mathcal{B}+\mathcal{C},
\label{t3}
\end{align}
where
\begin{align*}
\mathcal{A}
&=
\frac{1}{2\pi}\frac{\mu}{\sqrt{2\pi}}
\Biggl\|
\sum_{\substack{\ell_1,\ell_2\ne 0 \\ \ell_1+\ell_2\ne 0}}
\frac{1}{3\ell_1\ell_2}
{\rm e}^{-3it\ell_1\ell_2(\ell_1+\ell_2)}
\int_t^{t+s}
\hat{\tilde \chi}_{\ell_2}(r)
\sum_{\ell_1'+\ell_2'=\ell_1}
i\ell_1\,{\rm e}^{-ir(\ell_1^3-\ell_1'^3-\ell_2'^3)}
\hat{\tilde\psi}_{\ell_1'}(r)\hat{\tilde\psi}_{\ell_2'}(r)\,dr\,
{\rm e}^{i(\ell_1+\ell_2)x}
\Biggr\|_{\dot H^1},\\
\mathcal{B}
&=
\frac{1}{2\pi}\sqrt{\frac{2}{\pi}}\mu
\Biggl\|
\sum_{\substack{\ell_1,\ell_2\ne 0 \\ \ell_1+\ell_2 \ne 0}}
\frac{1}{3\ell_1\ell_2}
{\rm e}^{-3it\ell_1\ell_2(\ell_1+\ell_2)}
\int_t^{t+s}
\hat{\tilde\psi}_{\ell_1}(r)
\sum_{\ell_1'+\ell_2'=\ell_2}
i\ell_2\,{\rm e}^{-ir(\ell_2^3-\ell_1'^3-\ell_2'^3)}
\hat{\tilde\psi}_{\ell_1'}(r)\hat{\tilde \chi}_{\ell_2'}(r)\,dr\,
{\rm e}^{i(\ell_1+\ell_2)x}
\Biggr\|_{\dot H^1},\\
\mathcal{C}
&=
\frac{1}{2\pi}
\Biggl\|
\sum_{\substack{\ell_1,\ell_2\ne 0 \\ \ell_1+\ell_2\ne 0}}
\frac{1}{3\ell_1\ell_2}
{\rm e}^{-3it\ell_1\ell_2(\ell_1+\ell_2)}
\int_t^{t+s}
\hat{\tilde\psi}_{\ell_1}(r)
\sum_{\ell'}
\big\langle {\rm e}^{r\partial_x^3}Q^{1/2}e_{\ell'},{\rm e}^{i\ell_2 x}\big\rangle
\,d\beta_{\ell'}(r)\,
{\rm e}^{i(\ell_1+\ell_2)x}
\Biggr\|_{\dot H^1}.
\end{align*}
For the term $\mathcal{A}$, the triangle inequality together with Cauchy--Schwarz in time yields
\begin{equation}
    \label{prov_b0}
\mathcal{A}^2
\le
\biggl(\frac{\mu}{2\pi}\biggr)^2\,
s\int_t^{t+s}
\big\| F(r)\big\|^2_{\dot{H}^1}\,dr,
\end{equation}
where
$$
F(r)
:=
\displaystyle\frac{i}{\sqrt{2\pi}} \sum_{\ell\ne 0}{\rm e}^{i\ell x}\sum_{\substack{\ell_1,\ell_2\ne 0 \\ \ell_1+\ell_2=\ell}}
\frac{1}{3\ell_1\ell_2}
{\rm e}^{-3it\ell_1\ell_2(\ell_1+\ell_2)}\ell_1
\hat{\tilde \chi}_{\ell_2}(r)
\sum_{\ell_1'+\ell_2'=\ell_1}
{\rm e}^{-ir(\ell_1^3-\ell_1'^3-\ell_2'^3)}
\hat{\tilde\psi}_{\ell_1'}(r)\hat{\tilde\psi}_{\ell_2'}(r).
$$
Thus, noting that $\big|\frac{\ell_1}{\ell_1 \ell_2}\big|\le 1$, for $\ell_1,\ell_2 \in\mathbbm{Z}, \ell_1,\ell_2\ne 0$, we get
\begin{align*}
    \big\| F(r)\big\|^2_{\dot{H}^1}& = \displaystyle\sum_{\ell \ne 0}\ell^2 \Bigg(\sum_{\substack{\ell_1,\ell_2\ne 0\\ \ell_1+\ell_2=\ell}}\frac{1}{3\ell_1\ell_2}
{\rm e}^{-3it\ell_1\ell_2(\ell_1+\ell_2)}\ell_1
\hat{\tilde \chi}_{\ell_2}(r)
\sum_{\ell_1'+\ell_2'=\ell_1}
{\rm e}^{-ir(\ell_1^3-\ell_1'^3-\ell_2'^3)}
\hat{\tilde\psi}_{\ell_1'}(r)\hat{\tilde\psi}_{\ell_2'}(r)\Bigg)^2\\
&\lesssim \sum_{\ell \in\mathbbm{Z}}\ell^2 \Bigg(\sum_{\ell_1+\ell_2=\ell}
\big|\hat{\tilde \chi}_{\ell_2}(r)\big|
\sum_{\ell_1'+\ell_2'=\ell_1}
\big|\hat{\tilde\psi}_{\ell_1'}(r)\big| \big|\hat{\tilde\psi}_{\ell_2'}(r)\big|\Bigg)^2.
\end{align*}
Recalling the definition in Section \ref{ass_section} for $\chi^{(0)}$ and $\psi^{(0)}$, we thus obtain
\begin{align*}
    \big\| F(r)\big\|^2_{\dot{H}^1}\lesssim \big\|\tilde \chi^{(0)} (r) \tilde\psi^{(0)}(r)\tilde\psi^{(0)}(r)\big\|^2_{\dot{H}^1}.
\end{align*}
Using \eqref{bil_est}, we finally achieve 
\begin{equation}
    \label{prov_b}
 \big\| F(r)\big\|^2_{\dot{H}^1}\lesssim \big\| \tilde \chi(r)\big\|^2_{H^1} \big\| \tilde \psi(r)\big\|^4_{H^1}.
\end{equation}
Inserting \eqref{prov_b} into \eqref{prov_b0}, taking expectations on both sides, using H\"older's inequality, and then applying \eqref{reg_n0}, we obtain
\begin{align}
    \mathbbm{E}\mathcal{A}^2 &\lesssim s \int_t^{t+s}\mathbbm{E}\big[\big\| \tilde \chi(r)\big\|^2_{H^1} \big\| \tilde \psi(r)\big\|^4_{H^1}\big]\,dr \notag \\
    &\label{tA}\lesssim s^2.
\end{align}
In a completely analogous way, we also obtain
\begin{equation}
\label{tB}
\mathbbm{E}\mathcal{B}^2\lesssim s^2.
\end{equation}
It remains to bound the term $\mathcal{C}$.

Set
$$
G(x)
:=\displaystyle\frac{1}{\sqrt{2\pi}}
\sum_{\substack{\ell_1,\ell_2\ne 0 \\ \ell_1+\ell_2\ne 0}}
\frac{1}{3\ell_1\ell_2}
{\rm e}^{-3it\ell_1\ell_2(\ell_1+\ell_2)}
\int_t^{t+s}
\hat{\tilde\psi}_{\ell_1}(r)
\sum_{\ell'}
\big\langle {\rm e}^{r\partial_x^3}Q^{1/2}e_{\ell'},{\rm e}^{i\ell_2 x}\big\rangle
\,d\beta_{\ell'}(r)\,
{\rm e}^{i(\ell_1+\ell_2)x}.
$$
Then
$$
\mathbbm{E}\mathcal C^2
=
\frac{1}{2\pi}
\mathbbm{E}\|G\|_{\dot H^1}^2.
$$
Note that $G(x)=\frac{1}{\sqrt{2\pi}}\sum_{\ell\ne 0}\hat{G}_\ell {\rm e}^{i\ell x}$, where
$$
\hat G_\ell
=
\sum_{\substack{\ell_1,\ell_2\ne 0\\ \ell_1+\ell_2=\ell}}
\frac{1}{3\ell_1\ell_2}
{\rm e}^{-3it\ell_1\ell_2\ell}
\int_t^{t+s}
\hat{\tilde\psi}_{\ell_1}(r)
\sum_{\ell'}
\big\langle {\rm e}^{r\partial_x^3}Q^{1/2}e_{\ell'},{\rm e}^{i\ell_2 x}\big\rangle
\,d\beta_{\ell'}(r).
$$
Hence, by the definition of the homogeneous Sobolev seminorm,
$
\mathbbm{E}\|G\|_{\dot H^1}^2
=
\mathbbm{E}\sum\limits_{\ell\in\mathbb Z\setminus\{0\}}
|\ell|^2\,|\hat G_\ell|^2.
$ 
Using It\^o's isometry and the independence of the Brownian motions, we obtain
\begin{align*}
\mathbbm{E}|\hat G_\ell|^2
&=
\mathbbm{E}\int_t^{t+s}
\sum_{\ell'}
\Biggl|
\sum_{\substack{\ell_1,\ell_2\ne 0\\ \ell_1+\ell_2=\ell}}
\frac{1}{3\ell_1\ell_2}
{\rm e}^{-3it\ell_1\ell_2\ell}
\hat{\tilde\psi}_{\ell_1}(r)
\big\langle {\rm e}^{r\partial_x^3}Q^{1/2}e_{\ell'},{\rm e}^{i\ell_2 x}\big\rangle
\Biggr|^2\,dr.
\end{align*}
Therefore,
\begin{align*}
\mathbbm{E}\mathcal C^2
&=
\frac{1}{2\pi}
\int_t^{t+s}
\sum_{\ell'}\sum_{\ell\in\mathbb Z\setminus\{0\}}
|\ell|^2\,
\mathbbm{E}
\Biggl|
\sum_{\substack{\ell_1,\ell_2\ne 0 \\ \ell_1+\ell_2=\ell}}
\frac{1}{3\ell_1\ell_2}
{\rm e}^{-3it\ell_1\ell_2\ell}
\hat{\tilde\psi}_{\ell_1}(r)
\big\langle {\rm e}^{r\partial_x^3}Q^{1/2}e_{\ell'},{\rm e}^{i\ell_2 x}\big\rangle
\Biggr|^2\,dr\\
&\lesssim \frac{1}{2\pi}
\int_t^{t+s}
\sum_{\ell'}\sum_{\ell\in\mathbb Z\setminus\{0\}}
|\ell|^2\,
\mathbbm{E}
\Biggl(
\sum_{\ell_1+\ell_2=\ell}
|\hat{\tilde\psi}_{\ell_1}(r)|
\Bigl|
\big\langle {\rm e}^{r\partial_x^3}Q^{1/2}e_{\ell'},{\rm e}^{i\ell_2 x}\big\rangle
\Bigr|
\Biggr)^2\,dr.
\end{align*}
Similarly as in the estimate of $\mathcal A$, for each $\ell'$ and $r$ we define
$$
f_{\ell'}(r)
:=
\frac{1}{\sqrt{2\pi}}
\sum_{\ell_2\in\mathbb Z}
\Bigl|
\big\langle {\rm e}^{r\partial_x^3}Q^{1/2}e_{\ell'},{\rm e}^{i\ell_2 x}\big\rangle
\Bigr|
{\rm e}^{i\ell_2 x}.
$$
Then, we get
$$
\tilde\psi^{(0)}(r) f_{\ell'}^{(0)}(r)= \displaystyle\frac{1}{\sqrt{2\pi}}\sum_{\ell\in\mathbbm{Z}}{\rm e}^{i\ell x}\displaystyle\frac{1}{\sqrt{2\pi}} \sum_{\ell_1+\ell_2=\ell} \big| \hat{\tilde\psi}_{\ell_1}(r)\big|\big|\big\langle {\rm e}^{r\partial_x^3}Q^{1/2}e_{\ell'},{\rm e}^{i\ell_2 x}\big\rangle\big| 
$$
Consequently,
$$
\displaystyle\frac{1}{2\pi}\sum_{\ell\in\mathbb Z\setminus\{0\}}
|\ell|^2
\Biggl(
\sum_{\ell_1+\ell_2=\ell}
|\hat{\tilde\psi}_{\ell_1}(r)|
\Bigl|
\big\langle {\rm e}^{r\partial_x^3}Q^{1/2}e_{\ell'},{\rm e}^{i\ell_2 x}\big\rangle
\Bigr|
\Biggr)^2
=
\|\tilde\psi^{(0)}(r)\,f_{\ell'}^{(0)}(r)\|_{\dot H^1}^2.
$$
Hence,
$$
\mathbbm{E}\mathcal C^2
\lesssim
\int_t^{t+s}
\sum_{\ell'}
\mathbbm{E}\|\tilde\psi^{(0)}(r)\,f_{\ell'}^{(0)}(r)\|_{\dot H^1}^2\,dr.
$$
Using \eqref{bil_est}, we further obtain
$$
\|\tilde\psi^{(0)}(r)\,f_{\ell'}^{(0)}(r)\|_{\dot H^1}
\le
\|\tilde\psi^{(0)}(r)\,f_{\ell'}^{(0)}(r)\|_{H^1}
\lesssim
\|\tilde\psi(r)\|_{H^1}\,\|f_{\ell'}(r)\|_{H^1}.
$$
Therefore,
\begin{align*}
\mathbbm{E}\mathcal C^2
&\lesssim
\int_t^{t+s}
\|\tilde\psi(r)\|_{H^1}^2
\sum_{\ell'}\|f_{\ell'}(r)\|_{H^1}^2\,dr\\
&\lesssim 
\int_t^{t+s}
\|\tilde\psi(r)\|_{H^1}^2
\sum_{\ell'}\|{\rm e}^{r\partial_x^3}Q^{1/2}e_{\ell'}\|_{H^1}^2\,dr\\
&\lesssim \big\| Q^{1/2}\big\|^2_{\mathcal{L}_2^1} \int_t^{t+s}
\|\tilde\psi(r)\|_{H^1}^2\,dr.
\end{align*}
Invoking \eqref{reg_n0}, we conclude that
\begin{equation}
\label{tC}
\mathbbm{E}\mathcal C^2\lesssim s.
\end{equation}
Inserting \eqref{tA}--\eqref{tC} into \eqref{t3}, we arrive at
\begin{equation}
\label{t5}
\mathbbm{E}
\Biggl[
\frac{1}{2\pi}
\Biggl\|
\sum_{\substack{\ell_1,\ell_2\ne 0\\ \ell_1+\ell_2\ne 0}}
\frac{1}{3\ell_1\ell_2}
{\rm e}^{-3it\ell_1\ell_2(\ell_1+\ell_2)}
\Bigl(
\hat {\tilde\psi}_{\ell_1}(t+s)\hat{\tilde \chi}_{\ell_2}(t+s)
-\hat {\tilde\psi}_{\ell_1}(t)\hat{\tilde \chi}_{\ell_2}(t)
\Bigr)
{\rm e}^{i(\ell_1+\ell_2)x}
\Biggr\|_{\dot H^1}
\Biggr]^2
\lesssim s.
\end{equation}

The third summand in \eqref{t1} can be bounded similarly, and we then omit tedious computations. 
Combining this bound and \eqref{e0}, we conclude that
\begin{equation}
\label{n1}
\mathbbm{E}\Bigl\|
\int_0^s {\rm e}^{(t+r)\partial_x^3}\partial_x
\Bigl(
{\rm e}^{-(t+r)\partial_x^3}\tilde\psi(t+r)\,
{\rm e}^{-(t+r)\partial_x^3}\tilde{\chi}(t+r)
\Bigr)\,dr
\Bigr\|_{\dot{H}^1}^2
\lesssim s.
\end{equation}
Finally, thanks to \eqref{hom_b1}, the bounds \eqref{n1} and \eqref{n2} give the desired result.

\section*{Acknowledgements}
The second and third authors are members of the INdAM Research group GNCS. This work has been supported by PRIN-MUR 2022 project 20229P2HEA “Stochastic numerical modelling for sustainable innovation” (CUP: E53C24002280006), granted by MUR within the
scrolling of the final rankings of the PRIN 2022 call. This work has also been supported by PRIN-PNRR project BAT-MEN (BATtery Modeling, Experiments \& Numerics) - Project code P20228C2PP, CUP E53D23017940001, funded by MUR (Italian Ministry of University and Research) and European Union – NextGenerationEU. This work is partially supported by MOST National Key R\&D Program No. 2024FA1015900, the Hong Kong Research Grant Council GRF grant 15302823, GRF grant 15301025, NSFC/RGC Joint Research Scheme N$\_$PolyU5141/24, NSFC grants (No. 12522119,  No. 12301526, No. 12471386 and No. 12461160278), and the
CAS AMSS-PolyU Joint Laboratory of Applied Mathematics.

\bibliographystyle{amsplain}

%\subsection{Proof of Lemma \ref{Loc_err_lem}}
%\label{app_loc}

%\subsection{Proof of Lemma \ref{stab_lem}}
%\label{app_stab1}

%\subsection{Proof of Lemma \ref{det_conv_lem}}
%\label{appen_stab2}

%\subsection{Proof of Lemma \ref{lem0}}
%\label{appen_lem0}

\iffalse
\bibliographystyle{amsplain}

\fi

\end{document}